\title{Volumes of moduli spaces of hyperbolic surfaces with cone points}
\author{Lukas Anagnostou and Paul Norbury}
\affil{School of Mathematics and Statistics, University of Melbourne, VIC 3010, Australia}
\DeclareFontFamily{U}{min}{}
\DeclareFontShape{U}{min}{m}{n}{<-> udmj30}{}
\tikzset{>={Latex[width=0.25cm,length=0.25cm]}}
\definecolor{MyBlue}{HTML}{1d2bd1}
\definecolor{MyRed}{HTML}{e70000}
\definecolor{MyGreen}{HTML}{45ab02}
\definecolor{MyDarkGrey}{HTML}{828282}
\tikzset{->-/.style={decoration={
  markings,
  mark=at position 0.52 with {\arrow[#1, scale=1.1]{>}}},postaction={decorate}}}
\tikzset{->>-/.style={decoration={
  markings,
  mark=at position 0.52 with {\arrow[#1, scale=0.55]{>}}},postaction={decorate}}}
\definecolor{GGreen}{rgb}{0.0, 0.5, 0.0}
\theoremstyle{definition}
\newtheorem{definition}{Definition}[section]
\theoremstyle{plain}
\newtheorem{theorem}[definition]{Theorem}
\newtheorem{lemma}[definition]{Lemma}
\newtheorem{proposition}[definition]{Proposition}
\newtheorem{claim}[definition]{Claim}
\newtheorem{corollary}[definition]{Corollary}
\theoremstyle{remark}
\newtheorem{remark}{Remark}[section]
\newtheorem*{proof sketch}{Proof sketch}
\newtheorem*{proof of claim}{Proof of claim}
\newcommand{\bn}{\mathbb{N}}
\newcommand{\bq}{\mathbb{Q}}
\newcommand{\br}{\mathbb{R}}
\newcommand{\bz}{\mathbb{Z}}
\newcommand{\modm}{\mathcal{M}}
\newcommand{\smallsim}{\smallsym{\mathrel}{\sim}}
\tikzset{%
  highlight/.style={rectangle,rounded corners,fill=red!15,draw,
    fill opacity=0.5,thick,inner sep=0pt}
}
\newcommand{\smallsym}[2]{#1{\mathpalette\make@small@sym{#2}}}
\newcommand{\make@small@sym}[2]{%
  \vcenter{\hbox{$\m@th\downgrade@style#1#2$}}%
}
\newcommand{\downgrade@style}[1]{%
  \ifx#1\displaystyle\scriptstyle\else
    \ifx#1\textstyle\scriptstyle\else
      \scriptscriptstyle
  \fi\fi
}
\DeclareMathOperator{\Hom}{Hom}
\DeclareMathOperator{\GL}{GL}
\DeclareMathOperator{\SL}{SL}
\DeclareMathOperator{\PSL}{PSL}
\DeclareMathOperator{\Out}{Out}
\DeclareMathOperator{\Inn}{Inn}
\DeclareMathOperator{\Aut}{Aut}
\newcommand\bbR{\mathbb{R}}
\newcommand\bbZ{\mathbb{Z}}
\newcommand\cD{\mathcal{D}}
\newcommand\cM{\mathcal{M}}
\newcommand\cT{\mathcal{T}}
\renewcommand{\b}{\boldsymbol}
\newcommand\gpl{g_{\mathbf{a}}}%{g_{\mathrm{WP}}_{\hspace{.5mm}\mathbf{L}}}
\newcommand\wpl{\omega_{\mathbf{a}}}%_{\mathrm{WP}}_{\hspace{.5mm}\mathbf{L}}}
\newcommand{\bp}{\mathbb{P}}
\begin{document}

\maketitle 

 \begin{abstract}
    In this paper we study volumes of moduli spaces of hyperbolic surfaces with cone angles. We compute the volumes in some new cases, in particular when there exists a large cone angle. This allows us to give geometric meaning to Mirzakhani's polynomials under substitution of imaginary valued boundary lengths, corresponding to hyperbolic cone angles, and to study the behaviour of the volume under the $2\pi$ limit of a cone angle.  
  \end{abstract}

\setcounter{tocdepth}{2}  
\tableofcontents

\section{Introduction}
Define the moduli space of hyperbolic surfaces with labeled boundary components
\begin{align*}
\modm_{g,n}^{\mathrm{hyp}}(L_1&,...,L_n)=\Big\{(\Sigma,\beta_1,...,\beta_n)\mid \Sigma \text{ oriented hyperbolic surface},\\
&\text{ genus }\Sigma=g, \partial \Sigma=\sqcup\beta_j, 2\cosh(L_j/2)=-\mathrm{tr}(A_j)\Big\}/\sim
\end{align*} 
for $g,n\in\bn=\{0,1,2,...\}$ satisfying $2g-2+n>0$, where $A_j\in PSL(2,\br)$ represents the conjugacy class defined by the holonomy of the metric around the boundary component $\beta_j$.  The quotient is by isometries preserving each $\beta_j$ and its associated conjugacy class.  Each $L_j\in\br_{\geq0}\cup i\hspace{.5mm} \br_{>0}$ %corresponding to a hyperbolic, parabolic, respectively elliptic, conjugacy class, 
and the boundary component is geometrically realised by a geodesic of length $L_j\in\br_{>0}$, or a cusp when $L_j=0$, or a cone angle $\theta_j=|L_j|\in\br_{>0}$ when $L_j=i\theta_j$ is purely imaginary. %which is an incomplete hyperbolic metric on the punctured surface.
The moduli space $\modm_{g,n}^{\mathrm{hyp}}(\mathbf{L})$ gives a connected component of the relative character variety of representations from $\pi_1(\Sigma)$ to $PSL(2,\br)$ with boundary holonomy determined by $\mathbf{L}$.

The moduli space $\modm_{g,n}^{\mathrm{hyp}}(\mathbf{0})$ is naturally homeomorphic to $\modm_{g,n}$, the moduli space of genus $g$ curves with $n$ labeled points, via the map that sends a complete hyperbolic surface to its conformal class and inverse map defined by uniformisation.  Here we use the complex analytic topology underlying the analytification of $\modm_{g,n}$, and similarly for its algebraic compactifications introduced later.  This generalises to a homeomorphism $\modm_{g,n}^{\mathrm{hyp}}(\mathbf{L})\cong\modm_{g,n}$ for $\mathbf{L}=(L_1,...,L_n)\in i[0,2\pi)^n$ satisfying a Gauss-Bonnet condition \eqref{areaineq}, via existence of conical metrics \cite{McOPoi}.  The homeomorphism $\modm_{g,n}^{\mathrm{hyp}}(\mathbf{L})\cong\modm_{g,n}$ is proven to also hold for $\mathbf{L}\in\br_{\geq 0}^n$, via a grafting procedure along any geodesic boundary component \cite{MonRie}.%, and is expected to hold for any $\mathbf{L}$.  

On $\modm_{g,n}$, there is a naturally defined K\"ahler metric $g_{\mathrm{WP}}$, the Weil-Petersson metric \cite{WeiMod}, with K\"ahler form $\omega_{\mathrm{WP}}$, defined using the unique hyperbolic metric on a curve.  This defines the Weil-Petersson volume
\[\mathrm{Vol}(\modm_{g,n})=\int_{\modm_{g,n}}\exp\omega_{\mathrm{WP}}.\] 
Using the homeomorphism $\modm_{g,n}\cong\modm_{g,n}^{\mathrm{hyp}}(\mathbf{L})$ for $\mathbf{L}=(L_1,...,L_n)\in i[0,2\pi)^n$, the Weil-Petersson metric was generalised in \cite{STrWei} to define a K\"ahler metric  $\gpl$ for any $\mathbf{a}\in(0,1]^n$, and associated K\"ahler form  $\wpl$ over $\modm_{g,n}$.  The notation follows \cite{STrWei}, where one replaces $L\in[0,2\pi)$ by the weight $a(L)=1-L/2\pi i$ so that $\mathbf{a}=a(\mathbf{L})=(a_1,...,a_n)$ for $a_j=a(L_j)$ .  The family of K\"ahler forms $\wpl$ defines a deformation of the Weil-Petersson form $\omega_{\mathrm{WP}}=\omega_{(1,..,1)}$.  To emphasise the dependence on the conical hyperbolic metrics, the volume of $\modm_{g,n}$ with respect to $\wpl$ is denoted by $\mathrm{Vol}(\modm_{g,n}^{\mathrm{hyp}}(\mathbf{L})):=\int_{\modm_{g,n}}\exp\omega_{\mathbf{a}}$ for $\mathbf{a}=a(\mathbf{L})$.  

The symplectic form $\wpl$ can be defined in alternative ways.  Mondello \cite{MonPoi} proved that $\wpl$ agrees with the Goldman symplectic form \cite{GolInv} defined naturally on the character variety $\modm_{g,n}^{\mathrm{hyp}}(\mathbf{L})$ when $\mathbf{L}\in\{i[0,2\pi)\}^n$.  When $\mathbf{L}\in\{i[0,\pi)\}^n$, it is proven in \cite{AScGeo} that with respect to the Fenchel-Nielsen coordinates $\{\ell_j,\theta_j\mid j=1,...,3g-3+n\}$ for Teichm\"uller space associated to pants decompositions, $\wpl$ is defined by Wolpert's formula \cite{WolWei}:
\begin{equation}  \label{WPform}
\wpl=\sum d\ell_j\wedge d\theta_j.
\end{equation} 
Equivalently, $\{\ell_j,\theta_j\}$, $j=1,...,3g-3+n$, are Darboux coordinates for the symplectic form.  The formula \eqref{WPform} is mapping class group invariant hence descends to a well-defined symplectic form on the moduli space.  For more general values of $\mathbf{L}$, if there exist pants decompositions hence Fenchel-Nielsen coordinates, such as for $\mathbf{L}\in\br_{\geq 0}^n$, then  \eqref{WPform} is used to define a symplectic deformation $\omega_{\mathrm{WP}}(\mathbf{L})$ of the Weil-Petersson form.  % Goldman's definition allows rather general values of $\mathbf{L}$ and in particular applies when pants decompositions do not exist.   
%_{\mathrm{WP}

The Liouville volume form of $\omega_{\mathrm{WP}}$ and its deformations equips $\modm_{g,n}$ %^{\mathrm{hyp}}(L_1,...,L_n)$ 
with a measure.  The measure can also be defined by torsion of the natural complex defined by the associated flat $PSL(2,\br)$ connection \cite{WitQua}.  See \cite[equation 3.44]{SWiJTG} for a proof that the measure constructed via torsion coincides with the measure induced from the symplectic form defined via Fenchel-Nielsen coordinates.  The total measure is finite and referred to as the volume of the moduli space.

When $\mathbf{L}\in\br_{\geq 0}^n$, Mirzakhani \cite{MirSim} proved that $\mathrm{Vol}(\modm_{g,n}^{\mathrm{hyp}}(\mathbf{L}))=\int_{\modm_{g,n}^{\mathrm{hyp}}(\mathbf{L})}\exp\omega_{\mathrm{WP}}(\mathbf{L})$ is finite, %=\int_{\modm_{g,n}^{\mathrm{hyp}}(\mathbf{L})}\exp\omega\] 
 and given by a symmetric polynomial $V_{g,n}(L_1,...,L_n)\in\br[L_1^2,...,L_n^2]$.  She proved this in two ways: 
\begin{enumerate}[label=(\Roman*)]
\item \label{recpol} via a recursion \cite{MirSim} which uniquely determines the volumes from the initial calculations\\ $\mathrm{Vol}(\modm_{0,3}^{\mathrm{hyp}}(L_1,L_2,L_3))=1$ and $\mathrm{Vol}(\modm_{1,1}^{\mathrm{hyp}}(L))=\frac{1}{48}(L^2+4\pi^2)$; and 
\item \label{compol}  via a symplectic reduction argument \cite{MirWei} combined with the result of Wolpert \cite{WolWei} that $\omega^{\mathrm WP}$ extends to a closed current on the Deligne-Mumford compactification $\overline{\modm}_{g,n}$, which represents the cohomology class $2\pi^2\kappa_1\in H^{2}( \overline{\cal M}_{g,n},\br)$.  When $\mathbf{L}\neq0$, Mirzakhani generalised this to show that $\omega_{\mathrm{WP}}(\mathbf{L})$ extends to define the cohomology class 
\begin{equation}  \label{WPdef}
\left[\omega_{\mathrm{WP}}(\mathbf{L})\right]=\left[\omega_{\mathrm{WP}}\right]+\sum_1^n\tfrac12L_j^2\psi_j\in H^2(\overline{\modm}_{g,n},\br)
\end{equation}
so that 
\begin{equation}  \label{Poldef}
\int_{\modm_{g,n}^{\mathrm{hyp}}(\mathbf{L})}\exp\omega_{\mathrm{WP}}(\mathbf{L})=\int_{\overline{\modm}_{g,n}}\exp\big(2\pi^2\kappa_1+\sum_1^n\tfrac12L_j^2\psi_j\big)\in\br[L_1,...,L_n].
\end{equation}
\end{enumerate}
The notation in \eqref{WPdef} identifies $\omega_{\mathrm{WP}}$ and $\omega_{\mathrm{WP}}(\mathbf{L})$ with their extensions as currents %to a natural compactification of $\modm_{g,n}^{\mathrm{hyp}}(\mathbf{L})$ by nodal  surfaces with cusps which is homeomorphic 
to $\overline{\modm}_{g,n}$.    

We {\em define} the polynomial $V_{g,n}(\mathbf{L})$ using the algebraic geometric formula \eqref{Poldef}
\[ V_{g,n}(\mathbf{L}):=\int_{\overline{\modm}_{g,n}}\exp\big(2\pi^2\kappa_1+\sum_1^n\tfrac12L_j^2\psi_j\big).
\]
It is natural to ask whether the evaluation of $V_{g,n}(\mathbf{L})$ at any $\mathbf{L}\in\left\{\br_{\geq0}\cup i\hspace{.5mm} \br_{>0}\right\}^n$ corresponds to the volume of the moduli space $\modm_{g,n}^{\mathrm{hyp}}(\mathbf{L})$.  Indeed, Mirzakhani's results show that the answer is yes for $\mathbf{L}\in\br_{\geq 0}^n$, and her recursion argument \ref{recpol} can be generalised to also give an affirmative answer when $\mathbf{L}\in\{\br_{\geq 0}\cup i[0,\pi)\}^n$ via a McShane identity on hyperbolic surfaces with cone angles, in which imaginary arguments appear \cite{TWZGen}.  The argument in \cite{TWZGen} relies on the existence of pants decompositions in any isotopy class together with Fenchel-Nielsen coordinates, and fails when any $\theta_j>\pi$, since pants decompositions are no longer guaranteed---see Appendix~\ref{nopop}.  

The answer is sometimes no, i.e.\ evaluation of a polynomial $V_{g,n}(\mathbf{L})$ at $L_j=i\theta$ for $\theta>\pi$ does not necessarily correspond to an actual volume. For example, Mirzakhani's polynomial $V_{0,4}(L_1,L_2,L_3,L_4)=2\pi^2+\frac12\sum_{j=1}^4L_j^2$ evaluates to be negative in the following example, 
\[V_{0,4}(0,0,\theta i,(2\pi-\epsilon)i)<0\quad\Leftarrow\quad 0<4\pi\epsilon<\theta^2<4\pi^2\] 
whereas the volume of the non-empty moduli space $\modm_{0,4}^{\mathrm{hyp}}(\mathbf{L})$ for $\mathbf{L}=(0,0,\theta i,(2\pi-\epsilon)i)$ is necessarily positive (and given explicitly by $\mathrm{Vol}(\modm_{0,4}^{\mathrm{hyp}}(\mathbf{L}))=\epsilon(2\pi-\theta)>0$).

Via relations between intersection numbers on $\overline{\modm}_{g,n}$, the polynomials $V_{g,n}(\mathbf{L})$ were proven in \cite{DNoWei} to satisfy the following relations:
% \begin{equation}  \label{limit}
% V_{g,n+1}({\bf L},2\pi i)=\sum_{k=1}^n\int_0^{L_k}L_kV_{g,n}({\bf L})dL_k,\quad \frac{\partial V_{g,n+1}}{\partial L_{n+1}}({\bf L},2\pi i)=2\pi i(2g-2+n)V_{g,n}({\bf L})\quad
% \end{equation}
\begin{align}  \label{limit}
\begin{split}
V_{g,n+1}({\bf L},2\pi i)&=\sum_{k=1}^n\int_0^{L_k}L_kV_{g,n}({\bf L})dL_k\\
\frac{\partial V_{g,n+1}}{\partial L_{n+1}}({\bf L},2\pi i)&=2\pi i(2g-2+n)V_{g,n}({\bf L}).
\end{split}
\end{align}
These relations were generalised in \cite{DuSim}.  The relations \eqref{limit} evoke the idea of a $2\pi $ limit of a cone angle, however until now no geometric meaning has been deduced from these relations.  This is because evaluation of the polynomials does not necessarily correspond to volumes close to the limit $\theta\to 2\pi$.    %In fact, $V_{0,4}(0,L,\theta i,2\pi i)>0$ for $L\gg 0$ is also not a volume.  
One outcome of this paper is to describe occurrences when the relations \eqref{limit} have geometric meaning.   

The Weil-Petersson metric is incomplete on $\modm_{g,n}$, \cite{WolNon}, and its completion is compact and homeomorphic to  the Deligne-Mumford compactification $\overline{\modm}_{g,n}$, \cite{MasExt}.  For $\mathbf{L}\in i[0,2\pi)^n$, Schumacher and Trapani \cite{STrWei} generalised this to prove that the metric completion of $\modm_{g,n}$ with respect to $\gpl$ is also compact.  We denote the metric completion, which is a compact topological space, by $\overline{\modm}_{g,n}^{\mathrm{hyp}}(\mathbf{L})$. 

In general, $\overline{\modm}_{g,n}^{\mathrm{hyp}}(\mathbf{L})$ is not homeomorphic to the Deligne-Mumford compactification $\overline{\modm}_{g,n}$, which consists of all curves with at worst nodal singularities and finite automorphism group.  For $\mathbf{L}\in i[0,2\pi)^n$ it is instead homeomorphic to a compactification of $\modm_{g,n}$ via weighted curves, due to Hassett \cite{HasMod}.   Note that the metric completion is denoted by $\overline{\modm}_{g,\mathbf{a}}$ in \cite{STrWei}, but we reserve this notation for the Hassett compactification, which arises via an algebraic construction.   Schumacher and Trapani \cite{STrWei} proved that there is a natural continuous map  $\overline{\modm}_{g,n}^{\mathrm{hyp}}(\mathbf{L})\to\overline{\modm}_{g,n}^{\mathrm{hyp}}(\mathbf{L}')$ for $\mathbf{L}\leq\mathbf{L}'$, i.e. $|L_j|\leq|L_j'|$, $j=1,...,n$.  In particular, via the homeomorphism $\overline{\modm}_{g,n}\cong\overline{\modm}_{g,n}^{\mathrm{hyp}}(\mathbf{0})$ this induces a continuous map $\overline{\modm}_{g,n}\to\overline{\modm}_{g,n}^{\mathrm{hyp}}(\mathbf{L})$ that fits into the commutative diagram:
\begin{equation}  \label{homcom} 
\begin{array}{ccc}\modm_{g,n}&\stackrel{\cong}{\longrightarrow}&\modm_{g,n}^{\mathrm{hyp}}(\mathbf{L})\\
\downarrow&&\downarrow\\
\overline{\modm}_{g,n}&\longrightarrow%\stackrel{\cong}{\longrightarrow}
&\overline{\modm}_{g,n}^{\mathrm{hyp}}(\mathbf{L})
\end{array}
\end{equation}

The following theorem shows that the Weil-Petersson form $\wpl$, for $\mathbf{L}\in i[0,2\pi)^n$, extends to $\overline{\modm}_{g,n}$ as a closed current which defines a cohomology class expressible in terms of known tautological classes which agrees with \eqref{WPdef}.
\begin{theorem} \label{extension}
If
$\mathbf{L}\in\left\{(i\theta_1,...,i\theta_n)\in i[0,2\pi)^n\mid \theta_j+\theta_k<2\pi,\ \forall j\neq k  \right\}$ 
then $\wpl$ extends as a closed current  to $\overline{\modm}_{g,n}$ which defines the cohomology class
\begin{equation}  \label{WPcoh}
\left[\wpl\right]=2\pi^2\kappa_1-\tfrac12\sum_j\theta_j^2\psi_j\in H^2(\overline{\modm}_{g,n},\br).
\end{equation}
\end{theorem}
The proof of Theorem~\ref{extension} uses natural restriction properties of both sides of \eqref{WPcoh} and an induction argument to reduce the proof to calculations in a finite number of cases.  Mirzakhani's derivation \cite{MirWei} of the relation in cohomology \eqref{WPdef}, similar to  \eqref{WPcoh}, for the extension of the Weil-Petersson form $\wpl$ when $\mathbf{L}\in\br_{\geq 0}$ uses a different argument, given by a symplectic reduction argument together with a homeomorphism of a natural compactification of $\modm_{g,n}^{\mathrm{hyp}}(\mathbf{L})$ with the Deligne-Mumford compactification $\overline{\modm}_{g,n}$. 
Theorem~\ref{extension} leads to new examples of moduli spaces which have volume coinciding with evaluation of Mirzakhani's polynomial.%, given in the following theorem.
\begin{corollary}  \label{th:vol}
For 
$\mathbf{L}\in\left\{(i\theta_1,...,i\theta_n)\in i[0,2\pi)^n\mid \theta_j+\theta_k<2\pi,\ \forall j\neq k  \right\}$
\[\mathrm{Vol}(\modm_{g,n}^{\mathrm{hyp}}(\mathbf{L}))=V_{g,n}(\mathbf{L})=\int_{\overline{\modm}_{g,n}}\exp(2\pi^2\kappa_1+\tfrac12\mathbf{L}\cdot\mathbf{L})\]
i.e.\ the volume of the moduli space is obtained by evaluation of Mirzakhani's polynomial. 
\end{corollary}
The extension of $\wpl$ to $\overline{\modm}_{g,n}$ as a closed current used in Theorem~\ref{extension} holds more generally, for any cone angles $(\theta_1,...,\theta_n)\in [0,2\pi)^n$.  The existence of the extension is a consequence of the fact that points of the compactification $\overline{\modm}_{g,n}^{\mathrm{hyp}}(\mathbf{L})$ for any $\mathbf{L}\in i[0,2\pi)^n$ are represented by nodal hyperbolic surfaces.  
The condition $\theta_j+\theta_k<2\pi,\ \forall j\neq k$ in Theorem~\ref{extension} is a necessary and sufficient condition for the cohomology class of the extension $\left[\wpl\right]$ to take the form \eqref{WPcoh}.  In other words, if $\theta_j+\theta_k>2\pi$ for some $j\neq k$ then \eqref{WPcoh} no longer holds.  Necessity uses the following theorem which gives a homeomorphism of $\overline{\modm}_{g,n}^{\mathrm{hyp}}(\mathbf{L})$ with Hassett's compactification.  % $\overline{\modm}_{g,\mathbf{a}}$, for weights $\mathbf{a}=(a_1,...,a_n)\in(0,1]^n$ on the labeled points, of $\modm_{g,n}$.   Natural morphisms between the universal curves over $\overline{\modm}_{g,n}\to\overline{\modm}_{g,\mathbf{a}}$ which contracts a divisor that intersects non-trivially a class used to produce $\wpl$, contradicting it coming from a pull-back class.  
The general form of the cohomology class $\left[\wpl\right]$ and its associated volume is pursued further in \cite{AMNWei}.

 %It guarantees that $\wpl$ extends as a closed current to $\overline{\modm}_{g,\mathbf{a}}$, and under the natural morphism $\overline{\modm}_{g,n}\to\overline{\modm}_{g,\mathbf{a}}$ it pulls back to a closed current on $\overline{\modm}_{g,n}$. 
\begin{theorem}  \label{comphomeo}
Given $\mathbf{L}\in i[0,2\pi)^n$ which satisfies $\left|\sum_{j=1}^nL_j\right|<2\pi(2g-2+n)$, then there is a natural homeomorphism
\begin{equation}   \label{HassIso}
\overline{\modm}_{g,n}^{\mathrm{hyp}}(\mathbf{L})\cong\overline{\modm}_{g,\mathbf{a}}
\end{equation}  
where $\overline{\modm}_{g,\mathbf{a}}$ is the weighted Hassett space with $\mathbf{a}=a(\mathbf{L})$, i.e. $a_j=1-L_j/2\pi i$ for $j=1,...,n$. %$2\pi (1-a_j)=\theta_j=|L_j|$.
\end{theorem}

In the case of a single cone point with angle $\theta\in [0,2\pi)$, together with $n$ cusps, so that $\mathbf{L}=(0^n,i\theta)$, a consequence of Corollary~\ref{th:vol} is that Mirzakhani's polynomials give the volume of $\modm_{g,n}^{\mathrm{hyp}}(\mathbf{L})$ for all $\theta\in [0,2\pi)$. In this case one can consider the limit as the angle $\theta$ approaches $2\pi$, and thus attribute geometric meaning to the relations \eqref{limit}. 
\begin{corollary}  \label{old}
In the $2\pi$ cone angle limit, the volumes satisfy
\begin{align*}
\lim_{\theta\to 2\pi}\mathrm{Vol}(\modm_{g,n+1}^{\mathrm{hyp}}(0^n,i\theta))&=0\\
 \lim_{\theta\to 2\pi}\frac{\partial}{\partial \theta} \mathrm{Vol}(\modm_{g,n+1}^{\mathrm{hyp}}(0^n,i\theta))&=2\pi (2-2g-n)\mathrm{Vol}(\modm_{g,n}^{\mathrm{hyp}}(0^n)).
\end{align*}
\end{corollary}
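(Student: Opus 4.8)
The plan is to reduce the statement entirely to Mirzakhani's polynomial and the known relations \eqref{limit}, using Theorem~\ref{th:vol} as the one nontrivial input that licenses passing to the $2\pi$ limit. First I would invoke Theorem~\ref{th:vol} in the special case $\mathbf{L}=(0^n,i\theta)$ of $n$ cusps and a single cone point. Here the hypothesis $\theta_j+\theta_k<2\pi$ is automatic, since every pair of prescribed angles involves at least one cusp (angle $0$), so the theorem applies for the entire range $\theta\in[0,2\pi)$ and gives
\[
\text{Vol}(\modm_{g,n+1}^{\text{hyp}}(0^n,i\theta))=V_{g,n+1}(0^n,i\theta)\qquad\text{for all }\theta\in[0,2\pi).
\]
Thus the volume is literally the restriction of the polynomial $V_{g,n+1}$ to the ray $L_{n+1}=i\theta$. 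Because $V_{g,n+1}(0^n,L_{n+1})$ is a polynomial in $L_{n+1}^2=-\theta^2$, it is smooth in $\theta$, and both one-sided limits as $\theta\to2\pi$ are obtained by evaluating the polynomial at $L_{n+1}=2\pi i$.

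For the first assertion I would set every cusp length to zero in the first relation of \eqref{limit}. Each summand carries a factor $\int_0^{L_k}(\cdots)\,dL_k$, which vanishes when $L_k=0$; hence $V_{g,n+1}(0^n,2\pi i)=0$, and therefore $\lim_{\theta\to2\pi}\text{Vol}(\modm_{g,n+1}^{\text{hyp}}(0^n,i\theta))=0$. For the derivative I would apply the chain rule to the substitution $L_{n+1}=i\theta$: writing $V_{g,n+1}(0^n,L_{n+1})$ as a polynomial in $L_{n+1}^2$ one checks the identity $\frac{\partial}{\partial\theta}V_{g,n+1}(0^n,i\theta)=i\,\frac{\partial V_{g,n+1}}{\partial L_{n+1}}(0^n,i\theta)$. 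Taking $\theta\to2\pi$ and feeding in the second relation of \eqref{limit} yields
\[
\lim_{\theta\to2\pi}\frac{\partial}{\partial\theta}\text{Vol}(\modm_{g,n+1}^{\text{hyp}}(0^n,i\theta))=i\cdot 2\pi i\,(2g-2+n)\,V_{g,n}(0^n)=-2\pi(2g-2+n)\,V_{g,n}(0^n).
\]
Finally I would rewrite $2g-2+n=-\chi_{g,n}$ and use Mirzakhani's theorem for cusped surfaces, $V_{g,n}(0^n)=\text{Vol}(\modm_{g,n}^{\text{hyp}}(0^n))$, to arrive at $2\pi\chi_{g,n}\,\text{Vol}(\modm_{g,n}^{\text{hyp}}(0^n))$, exactly as claimed.

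The only genuine content here is the interchange of limit and evaluation: one needs the volume to coincide with the polynomial on the whole half-open interval $[0,2\pi)$, and not merely on $[0,\pi)$ where a pants decomposition is guaranteed. This is precisely what Theorem~\ref{th:vol} supplies. Without it, and as the introduction stresses, evaluation of $V_{g,n+1}$ near $\theta=2\pi$ need not represent an actual volume, so the relations \eqref{limit} would carry no geometric meaning; with it, the corollary becomes a routine computation with the polynomial relations and the chain rule. I therefore expect no further obstacle beyond correctly tracking the factor of $i$ from the imaginary substitution and the sign conversion $2g-2+n=-\chi_{g,n}$.
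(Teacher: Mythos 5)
Your proof is correct. Theorem~\ref{th:vol} does apply to $\mathbf{L}=(0^n,i\theta)$ for every $\theta\in[0,2\pi)$, since each pairwise angle sum involves at least one cusp, so the volume is literally the polynomial $V_{g,n+1}(0^n,i\theta)$ on the whole interval; the two limits then follow from \eqref{limit} by polynomial continuity and the chain rule, with the factor of $i$ and the sign $2g-2+n=-\chi_{g,n}$ tracked correctly. This is one of the two routes the paper explicitly sanctions (it remarks that Corollary~\ref{old} ``follows from \cite{DNoWei}''), but the proof the paper actually writes out is different in both halves. For the vanishing, instead of setting $\mathbf{L}=0^n$ in the first relation of \eqref{limit}, the paper invokes the degeneration of the K\"ahler metric in the $2\pi$ limit (\cite{STrWei}, or Mondello's Poisson-structure formula \cite{MonPoi}); this buys extra generality, since that argument shows the vanishing persists even in regimes where the volume is \emph{not} given by Mirzakhani's polynomial, e.g.\ when other cone angles are present. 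For the derivative, rather than quoting the second relation of \eqref{limit} as a black box, the paper re-derives exactly the special case it needs: by \eqref{WPcoh} it writes $\text{Vol}(\modm_{g,n+1}^{\text{hyp}}(0^n,i\theta))=\int_{\overline{\modm}_{g,n+1}}\big(2\pi^2\kappa_1-\tfrac12\theta^2\psi_{n+1}\big)^N/N!$ with $N=3g-2+n$, differentiates in $\theta$, sets $\theta=2\pi$, and applies the pullback formula $\kappa_1-\psi_{n+1}=\pi^*\kappa_1$ and the pushforward formula $\pi_*(\psi_{n+1}\pi^*\eta)=(2g-2+n)\eta$. So your argument is more economical but leans on \cite{DNoWei}, while the paper's is self-contained in intersection theory and yields a strictly stronger vanishing statement; both hinge on the same nontrivial input, namely the identification of volume with polynomial all the way up to $\theta=2\pi$ supplied by Theorem~\ref{th:vol} (equivalently Corollary~\ref{th:WPcoh}).
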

The relations in Corollary~\ref{old} are a consequence of Corollary~\ref{th:vol} together with the intersection theory arguments of \cite{DNoWei}.   An alternative more direct geometric argument can be given using the degeneration of the K\"ahler metric $\gpl$ in the limit.  The next corollary is a positivity result.
\begin{corollary}
\begin{equation}  \label{positive}
\theta_j+\theta_k<2\pi,\quad \forall j\neq k\in\{1,\ldots,n\} \quad\Rightarrow\quad V_{g,n}(i\theta_1,...,i\theta_n)>0.
\end{equation}
\end{corollary}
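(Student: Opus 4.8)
The plan is to deduce positivity of the polynomial value directly from Theorem~\ref{th:vol}, which identifies $V_{g,n}(i\theta_1,\ldots,i\theta_n)$ with a genuine hyperbolic volume under precisely the hypothesis of \eqref{positive}. Writing $\mathbf{L}=(i\theta_1,\ldots,i\theta_n)$, Theorem~\ref{th:vol} gives
\[
V_{g,n}(i\theta_1,\ldots,i\theta_n)=\text{Vol}(\modm_{g,n}^{\text{hyp}}(\mathbf{L}))=\int_{\modm_{g,n}^{\text{hyp}}(\mathbf{L})}\exp\omega^{WP}(\mathbf{L}).
\]
Since $\omega^{WP}(\mathbf{L})$ is the K\"ahler form of the natural K\"ahler metric on $\modm_{g,n}^{\text{hyp}}(\mathbf{L})$, its top exterior power is a strictly positive multiple of the Riemannian volume form, so $\exp\omega^{WP}(\mathbf{L})$ integrates to a strictly positive number as soon as $\modm_{g,n}^{\text{hyp}}(\mathbf{L})$ is non-empty. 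Thus the entire content reduces to verifying non-emptiness, which by the discussion around \eqref{areaineq} is equivalent to the area inequality $\sum_{j=1}^n\theta_j<2\pi(2g-2+n)$.

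The key step is therefore to show that the pairwise hypothesis $\theta_j+\theta_k<2\pi$ forces the area inequality in all but one exceptional $(g,n)$. First I would sum the $\binom{n}{2}$ pairwise inequalities: each $\theta_j$ occurs in exactly $n-1$ of them, so for $n\geq2$
\[
(n-1)\sum_{j=1}^n\theta_j=\sum_{j<k}(\theta_j+\theta_k)<2\pi\binom{n}{2}=\pi n(n-1),
\]
giving $\sum_j\theta_j<\pi n$. It then suffices to have $\pi n\leq 2\pi(2g-2+n)$, equivalently $4g+n\geq 4$, for \eqref{areaineq} to follow. Among admissible pairs with $2g-2+n>0$ and $n\geq 2$ this condition fails only for $(g,n)=(0,3)$, while the case $n=1$ is immediate since then $g\geq1$ and $\theta_1<2\pi\leq 2\pi(2g-1)$. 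Hence for every admissible $(g,n)$ except $(0,3)$ the moduli space is non-empty and the previous paragraph gives $V_{g,n}(i\theta_1,\ldots,i\theta_n)>0$.

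Finally I would dispose of the single exceptional case $(g,n)=(0,3)$ directly: since $\overline{\modm}_{0,3}$ is a point, the defining integral collapses to its degree-zero part and $V_{0,3}\equiv1>0$ independently of the cone angles, completing \eqref{positive}. I expect the main obstacle to be conceptual bookkeeping rather than analysis, namely the fact that the pairwise hypothesis does \emph{not} on its own imply the area inequality: for $(0,3)$ with all $\theta_j$ near $\pi$ the prospective surface has negative area and $\modm_{0,3}^{\text{hyp}}(\mathbf{L})$ is empty, so the reduction to Theorem~\ref{th:vol} must be supplemented by the separate evaluation $V_{0,3}=1$, and one must confirm that $(0,3)$ is genuinely the only pair escaping the counting argument above.
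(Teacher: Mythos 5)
Your proposal is correct, and its core reduction---identify $V_{g,n}(i\theta_1,\ldots,i\theta_n)$ with $\text{Vol}(\modm_{g,n}^{\text{hyp}}(\mathbf{L}))$ via Theorem~\ref{th:vol} and invoke positivity of the integral of $\exp\omega^{WP}(\mathbf{L})$ over a non-empty K\"ahler moduli space---is exactly the route the paper takes, where the corollary is stated as an immediate consequence of Theorem~\ref{th:vol} with no further argument. What you add, and what the paper glosses over, is the verification that the pairwise hypothesis actually puts you in a position to use the volume interpretation: non-emptiness of $\modm_{g,n}^{\text{hyp}}(\mathbf{L})$ is equivalent to the area inequality \eqref{areaineq}, and your counting argument (summing the $\binom{n}{2}$ pairwise bounds to get $\sum_j\theta_j<\pi n$, then checking $\pi n\leq 2\pi(2g-2+n)$ whenever $4g+n\geq 4$) cleanly isolates $(0,3)$ as the unique admissible exception, with $n=1$ handled directly. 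Your treatment of $(0,3)$ is not mere bookkeeping: for angles such as $\theta_1=\theta_2=\theta_3$ close to $\pi$, the pairwise condition holds while $\modm_{0,3}^{\text{hyp}}(\mathbf{L})$ is empty, so the volume is $0$ yet $V_{0,3}\equiv 1$; this shows the identification of volume with polynomial genuinely fails there (indeed Theorem~\ref{th:vol} as stated needs an implicit non-emptiness caveat in this one case), and positivity must instead come from the direct evaluation $V_{0,3}=1$ over the point $\overline{\modm}_{0,3}$, exactly as you do. In short: same approach as the paper, but your proof closes a real gap in the paper's terse deduction.
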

The coefficients of Mirzakhani's polynomials are non-negative, hence positivity of the polynomials on $\mathbf{L}\in\br^n_{\geq 0}$ is obvious, even without the volume interpretation.  The inequality \eqref{positive} is not obvious as a property of the polynomials since % except for the case $\theta_1<2\pi$ and $\theta_j=0$ for $j>1$, when it is a consequence of Corollary~\ref{old} since the vanishing guarantees a factor of $(4\pi^2-\theta^2)$ and the derivative ensures the other factor is positive.
the general case of imaginary $L_j$ is subtler, as shown in the following example. 
\begin{align*}
V_{0,5}(\mathbf{L})&=\frac18\left.\left(\sum_{j=1}^5L_j^4+4\sum_{j<k}^5L_j^2L_k^2+24\pi^2\sum_{j=1}^5L_j^2+80\pi^4\right)\right|_{L_j=i\theta_j}\\
&=\frac{1}{24}\sum
(4\pi^2-\theta_j^2-\theta_k^2-\theta_\ell^2)
(4\pi^2-\theta_j^2-\theta_m^2-\theta_n^2)>0\quad\Leftarrow\theta_j+\theta_k<2\pi, \forall j\neq k.
%&=\tfrac18(4\pi^2-\sum_{j=1}^5\theta_j^2)(20\pi^2-\sum_{j=1}^5\theta_j^2)+\tfrac14\sum_{j<k}\theta_j^2\theta_k^2>0\quad \Leftarrow\quad\sum_{j=1}^5\theta_j<2\pi\\
%&=\frac14\sum_{j<k}\Big(3(\pi^2-\theta_j^2)(\pi^2-\theta_k^2)+\pi^4-\theta_j^2\theta_k^2\Big)+\frac18\sum_{j=1}^5\theta_j^4>0\quad \Leftarrow\quad \theta_j\leq\pi,\ \forall j.
\end{align*}
Proof of positivity uses the expression for $V_{0,5}(\mathbf{L})$ in the second line above. %in the second and third lines.
The sum in the second line is over all $\{j,k,\ell,m,n\}=\{1,2,3,4,5\}$ and contains 15 summands.  In the case that $\theta_j+\theta_k<2\pi, \forall j\neq k$ one can deduce that $\theta_j^2+\theta_k^2+\theta_\ell^2<4\pi^2$, $\forall$ distinct $j,k,\ell$, so that
the second line implies positivity.  %In the case that $\theta_j\leq\pi$, $\forall j$,  one can deduce that $\theta_j^2\theta_k^2\leq\pi^4$, so that the expression in the third line implies positivity.

We describe the natural compactifications $\overline{\modm}_{g,n}^{\mathrm{hyp}}(\mathbf{L})$ of $\modm_{g,n}$ in Section~\ref{sec:compact}.  Their homeomorphism type, given in Theorem~\ref{comphomeo}, is obtained by studying the universal curve over $\modm_{g,n}$ and its extension to the compactification, which consists of nodal hyperbolic surfaces as expected.  It is further shown that a compactification via the addition of nodal curves fails for some $\mathbf{L}$, in particular when the boundary components include both geodesics and cone angles greater than $\pi$.  In Section~\ref{WPsymp} we calculate the cohomology class of the extension of the generalised Weil-Petersson symplectic form $\wpl$ on $\modm_{g,n}$ to $\overline{\modm}_{g,n}$ analogous to Wolpert's result in the classical case.  This allows the calculation of the volume via intersection theory.   The calculation of $\mathrm{Vol}(\modm_{1,1}^{\mathrm{hyp}}(i\theta))$, which is an essential base case for the inductive proof of Theorem~\ref{extension}, is given in Appendix~\ref{vol11}.
 
\noindent {\em Acknowledgements.}   Research of PN was supported under the  Australian Research Council {\sl Discovery Projects} funding scheme project number DP180103891. Research of LA was supported by an Australian Government Research Training Program Scholarship and an Elizabeth \& Vernon Puzey Scholarship.

\section{Compactifications}   \label{sec:compact}
In this section we study the natural compactification of the moduli space $\modm_{g,n}$ obtained via the metric completion of the K\"ahler metric $\gpl$ for $\mathbf{a}=a(\mathbf{L})$ and any $\mathbf{L}\in \{i[0,2\pi)\}^n$.  We show that points in the compactification correspond to nodal hyperbolic surfaces with cusps at nodes.   The role of complete nodal hyperbolic surfaces in the compactification of the moduli space was first studied by Bers \cite{BerSpa} and developed further in \cite{HarCha,HKoAna}.   %A requisite for this behaviour is Mumford's compactness criterion and the existence of a positive lower bound on the distance between boundary components. 

\subsection{Metric completions}   \label{compl}
We begin with the definition of the Weil-Petersson metric $g_{\mathrm{WP}}$ over $\modm_{g,n}$ and its conical generalisation $\gpl$ where $\mathbf{a}=a(\mathbf{L})$ for $\mathbf{L}\in i[0,2\pi)^n$.  The tensor $g_{\mathrm{WP}}$ is naturally defined on the tangent space of $\modm_{g,n}$, although as usual it will be convenient to equivalently define it over the cotangent space.
The tangent space to $\modm_{g,n}$  at a point $(C,p_1,\dots,p_n)\in\modm_{g,n}$ is naturally given by $T_{[C]}\modm_{g,n}=H^1(C,T_C(-D))$ where $T_C(-D)$ is the sheaf of local holomorphic vector fields that vanish on $D=(p_1,\dots,p_n)$.   The cotangent space is naturally identified with the vector space of meromorphic quadratic differentials with simple poles on $D$:
\[T^*_{[C]}\modm_{g,n}=H^1(C,T_C(-D))^\vee\cong H^0(C,K^{\otimes 2}_C(D)).\]
At a point $(C,p_1,\dots,p_n)\in\modm_{g,n}$, the Weil-Petersson metric $g_{\mathrm{WP}}$ and Weil-Petersson form $\omega_{\mathrm{WP}}$ are defined as the real and imaginary part of the following Petersson pairing on the vector space of quadratic differentials:
\begin{equation}  \label{WPmetric}
\langle\eta,\xi\rangle:=\int_C\frac{\overline{\eta}\xi}{h},\qquad \eta,\xi\in H^0(C,K_C^{\otimes 2}(D))
\end{equation}
where $h$ is the complete hyperbolic metric on $C-D$.  %The pairing \eqref{WPmetric} defines a Hermitian metric on $\modm_{g,n}$ with imaginary part a K\"ahler form $\omega^{WP}$.

When $\mathbf{L}\in i[0,2\pi)^n$ and 
\begin{equation}  \label{areaineq}
\sum_{j=1}^n|L_j|<2\pi(2g-2+n)
\end{equation}
there is a homeomorphism $\modm_{g,n}^{\mathrm{hyp}}(\mathbf{L})\cong\modm_{g,n}$ which associates to any conical hyperbolic metric its conformal class, and conversely the existence of a unique conical metric $h(\mathbf{L})$ with prescribed cone angles $|L_j|$, $j=1,...,n$ in the given conformal class is proven by McOwen in \cite{McOPoi}.  The inequality \eqref{areaineq} is a necessary condition for the moduli space $\modm_{g,n}^{\mathrm{hyp}}(\mathbf{L})$ to be non-empty due to the Gauss-Bonnet theorem.    Schumacher and Trapani \cite{STrWei} generalised \eqref{WPmetric} by replacing the complete hyperbolic metric $h$ with a conical hyperbolic metric $h(\mathbf{L})$ with prescribed cone angles at points of $D$.   In terms of the cotangent bundle it is defined as follows.
\begin{definition}  \label{wpa}
For $\mathbf{a}=a(\mathbf{L})$, where $\mathbf{L}\in i[0,2\pi)^n$ satisfies \eqref{areaineq}, define %the K\"ahler metric 
$\gpl$, respectively %its associated K\"ahler form 
$\wpl$, as the real, respectively imaginary, part of the Hermitian metric: % 
\begin{equation}  \label{WPcmetric}
\langle\eta,\xi\rangle:=\int_C\frac{\overline{\eta}\xi}{h(\mathbf{L})},\qquad \eta,\xi\in H^0(C,K_C^{\otimes 2}(D)).
\end{equation}
\end{definition}
It is proven in \cite{STrWei} that $\gpl$ is  a K\"ahler metric.   The metric completion of $\modm_{g,n}$ with respect to $\gpl$ defines $\overline{\modm}_{g,n}^{\mathrm{hyp}}(\mathbf{L})$ .
\begin{remark}
For general $L\in i\br_+$, where one allows cone angles of $2\pi$ and greater, it is proven in \cite{McOPoi} that existence of a hyperbolic metric with given cone angles in a given conformal class still holds.  However, \eqref{WPcmetric} diverges when the angle is not less than $2\pi$ so it no longer defines a metric.
\end{remark}

Theorem~\ref{comphomeo} is a consequence of the following four propositions.

\begin{proposition}[\cite{STrWei}]   \label{modcomp}
The identity map on $\modm_{g,n}$ extends to a continuous, surjective map
\begin{equation}  \label{surj}
\overline{\modm}_{g,n}\to\overline{\modm}_{g,n}^{\mathrm{hyp}}(\mathbf{L})
\end{equation}
and in particular $\overline{\modm}_{g,n}^{\mathrm{hyp}}(\mathbf{L})$ is compact.
\end{proposition}
\begin{proof}
It is proven in \cite[Proposition 2.4]{STrVar} that if $|\mathbf{L}|\geq |\mathbf{L}'|$, i.e. $|L_j|\geq |L_j'|$ for each $j$, then on a given curve $C$ the hyperbolic metric $h(\mathbf{L}')$ obtained via \cite{McOPoi}  dominates $h(\mathbf{L})$ i.e. they satisfy $h(\mathbf{L})\leq h(\mathbf{L}')$.   The formula \eqref{WPcmetric} then implies that the corresponding  K\"ahler metrics satisfy the following monotonicity result:
\begin{equation}   \label{metcomp}
\mathbf{a}\leq \mathbf{a}'\quad\Rightarrow\quad\gpl\leq g_{\mathbf{a}'}.
\end{equation}
  An immediate consequence is that there exists a distance decreasing, hence continuous, map between the metric completions
\[\overline{\modm}_{g,n}^{\mathrm{hyp}}(\mathbf{L}')\to\overline{\modm}_{g,n}^{\mathrm{hyp}}(\mathbf{L}).
\] 
Applying this to $\mathbf{L}'=\mathbf{0}$ produces a continuous map
\[
\overline{\modm}_{g,n}\to\overline{\modm}_{g,n}^{\mathrm{hyp}}(\mathbf{L})
\]
since the completion $\overline{\modm}_{g,n}^{\mathrm{hyp}}(\mathbf{0})$ is homeomorphic to the Deligne-Mumford compactification $\overline{\modm}_{g,n}$, \cite{MasExt}.  Its image is compact, since it is the image of a compact space under a continuous map, and dense since it is the identity on $\modm_{g,n}$ which is dense in its metric completion.  The image is closed and dense, hence all of $\overline{\modm}_{g,n}^{\mathrm{hyp}}(\mathbf{L})$ which is necessarily compact.
\end{proof}
Proposition~\ref{modcomp} produces compactifications of $\modm_{g,n}$ without knowledge of the moduli functor, i.e.\ an understanding of  the universal curve over $\modm_{g,n}$ and whether it extends over each compactification with geometrically meaningful fibres.  Indeed, by studying the behaviour of the universal curve over $\modm_{g,n}$ as a point approaches the boundary in $\overline{\modm}_{g,n}^{\mathrm{hyp}}(\mathbf{L})$ one sees that nodal hyperbolic surfaces arise, and this can be used to determine the homeomorphism type of the metric completion stated in Theorem~\ref{comphomeo}.

A complete {\em nodal hyperbolic surface} $\Sigma$ is a nodal surface equipped with a complete hyperbolic metric on each component of $\Sigma-N$, where $N\subset\Sigma$ denotes the nodes, or double points, of $\Sigma$.  More generally, a nodal hyperbolic surface with cone points  labeled by $\mathbf{L}\in i\hspace{.5mm} [0,2\pi)^n$ (or more generally $\mathbf{L}\in\{\br_{\geq0}\cup i\hspace{.5mm} \br_{>0}\}^n$) is a nodal surface equipped with a hyperbolic metric on each component $\Sigma'\subset\Sigma-N$ which is complete at nodes.  Each irreducible component $\Sigma'\subset\Sigma-N$ is a hyperbolic surface with cone points and geodesic boundary components determined by those $L_i$ corresponding to each $p_i\in\Sigma'$, and each node is a hyperbolic cusp.

Given any point in $\overline{\modm}_{g,n}$ there exists a unique complete nodal hyperbolic surface with irreducible components conformally equivalent to the corresponding irreducible components of the stable curve.  Moreover, any complete nodal hyperbolic surface lies in the metric completion $\overline{\modm}_{g,n}^{\mathrm{hyp}}(\mathbf{0})$ of $\modm_{g,n}$ with respect to the Weil-Petersson metric, due to the homeomorphism $\overline{\modm}_{g,n}^{\mathrm{hyp}}(\mathbf{0})\cong\overline{\modm}_{g,n}$ \cite{MasExt}.  The following proposition generalises this to any nodal hyperbolic surface with cone angles.

\begin{proposition}   \label{path}
Given a nodal hyperbolic surface $\Sigma$ with node at $P\in\Sigma$ and cone points labeled by $\mathbf{L}\in i\hspace{.5mm} [0,2\pi)^n$ there exists a finite length path $\gamma:[0,1)\to\modm_{g,n}$, with respect to the metric $\gpl$ for $\mathbf{a}=a(\mathbf{L})$, such that the associated path of smooth hyperbolic surfaces $\{\Sigma_t\}_{t\in[0,1)}$ converges to the nodal surface $\Sigma$.  In particular, $\Sigma$ represents a point in $\overline{\modm}_{g,n}^{\mathrm{hyp}}(\mathbf{L})$.
\end{proposition}
\begin{proof}
The idea of the proof is to show that the cusps on either side of $P$ can be deformed to short geodesics, hence smoothed away.  Then a comparison result will show that the path of smooth hyperbolic surfaces, which converges to the nodal surface, has finite length.  We first study the deformation of a cusp on a smooth hyperbolic surface.

The moduli space $\modm_{g,n}^{\mathrm{hyp}}(L_1,...,L_n)$ maps as an open set to the character variety of conjugacy classes of representations $\pi_1(C)\to PSL(2,\br)$.  The character variety is isomorphic to $PSL(2,\br)^M/PSL(2,\br)$ where $M=2g-1+n$ is the rank of the free group $\pi_1(C)$.  For a simple closed curve $\beta_j\subset C$ representing a boundary component (in the case that $n>0$), the function $|\mathrm{tr}(\beta_j)|$ is regular on $\modm_{g,n}^{\mathrm{hyp}}(L_1,...,L_n)$ at the value 2 corresponding to $L_j=0$.  This follows from regularity of $|\mathrm{tr}(A)|$ on $PSL(2,\br)_*=PSL(2,\br)\backslash\{\pm I\}$ at the value 2, i.e.\hspace{1mm}surjectivity of $D|\mathrm{tr}(\beta_j)|$ at each point of $|\mathrm{tr}(\beta_j)|^{-1}(2)$, which follows from direct calculation.  Hence $|\mathrm{tr}(A_j)|$ is regular at the value 2 on $PSL(2,\br)_*^M$ where $A_j$ is the $j$th factor,  and it is also regular at the value 2 on $PSL(2,\br)_*^M/PSL(2,\br)$ since $|\mathrm{tr}(A_j)|$ and its linearisation are invariant under conjugation.
    By the regularity of $|\mathrm{tr}(A_j)|$, a cusp, or any number of cusps, can be deformed to small length boundary geodesics by varying $|\mathrm{tr}(A_j)|=2$ to $|\mathrm{tr}(A_j)|>2$, which produces a homeomorphic moduli space via an application of Ehresmann’s theorem.  Hence, given a hyperbolic surface with cusps, there exists a path of smooth hyperbolic surfaces with boundary geodesics tending to the cusps, i.e. the boundary geodesic lengths tend to zero, so that the path ends at the hyperbolic surface with cusps.  

It follows that given a nodal hyperbolic surface with node at $P$, there exists a path of smooth hyperbolic surfaces with boundary geodesics tending to the cusps at $P$, so that the path ends at the nodal hyperbolic surface.  This is achieved by gluing a path of hyperbolic surfaces along two equal length geodesic boundaries that tend to each side of the cusp $P$ via the construction above.  

The path of smooth hyperbolic surfaces has finite length with respect to the metric $\gpl$, since by \eqref{metcomp} the length is less than the length of the path with respect to the Weil-Petersson metric $g_{\mathrm{WP}}$, using the corresponding path of {\em complete} hyperbolic surfaces, which is finite by \cite{MasExt}.  Hence the path converges to the given nodal hyperbolic surface inside the metric completion $\overline{\modm}_{g,n}^{\mathrm{hyp}}(\mathbf{L})$ as required.   Note that the ambiguity in gluing due to rotations along the geodesic shows that the path may be naturally considered as a family over a disk.  The argument immediately generalises to a collection of nodes.  
\end{proof} 
 
\begin{proposition}   \label{mainch}
The map \eqref{surj} defines a homeomorphism
when $|L_j|+|L_k|<2\pi,\ \forall j\neq k$.
\end{proposition}
\begin{proof} We must show that the map \eqref{surj} is injective, since an injective continuous map from a compact domain is necessarily a homeomorphism onto its image.  

Given any nodal curve $C$ represented by a point in $\overline{\modm}_{g,n}$, on each irreducible component $Y\subset C$ there exists a unique hyperbolic metric with cone angle at each $p_j\in Y$ given by $|L_j|$, and cusps at nodes.  The existence uses the assumption $|L_j|+|L_k|<2\pi,\ \forall j\neq k$ which guarantees that each irreducible component, in particular a rational component with one node, satisfies \eqref{areaineq}.  The union over all irreducible components of these smooth hyperbolic surfaces forms a nodal hyperbolic surface which, by Proposition~\ref{path}, represents a point in the metric completion $\overline{\modm}_{g,n}^{\mathrm{hyp}}(\mathbf{L})$.  Any other nodal curve $C'$ gives rise to a different nodal hyperbolic surface which represents a point in the metric completion $\overline{\modm}_{g,n}^{\mathrm{hyp}}(\mathbf{L})$.  This defines an embedding of $\overline{\modm}_{g,n}$ into $\overline{\modm}_{g,n}^{\mathrm{hyp}}(\mathbf{L})$.

It remains to show that every point in the metric completion $\overline{\modm}_{g,n}^{\mathrm{hyp}}(\mathbf{L})$ corresponds to a nodal hyperbolic surface which represents a stable curve in $\overline{\modm}_{g,n}$.
Consider a path $\gamma:(0,1]\to\overline{\modm}_{g,n}$ with limit point $\gamma(1)$ in the boundary, so that $\gamma(1)$ defines a nodal curve $C$. 
In each conical hyperbolic surface corresponding to $\gamma(t)$ for $t<1$, there exists a unique simple closed geodesic in the isotopy class of the vanishing cycle, with length that tends to zero as $t\to 1$.  This is proven via comparison with the complete case using the homeomorphism $\modm_{g,n}^{\mathrm{hyp}}(\mathbf{L})\cong\modm_{g,n}\cong\modm_{g,n}^{\mathrm{hyp}}(\mathbf{0})$ and the following argument.  The path $\gamma(t)$ also produces a path of {\em complete} hyperbolic surfaces containing simple closed geodesics, say of length $\epsilon(t)$, that become a cusp in the limit so $\lim_{t\to1}\epsilon(t)=0$.  This is a consequence of Mumford's compactness criterion \cite{MumRem}, that given $\epsilon>0$, the subset of $\modm_{g,n}$ corresponding to hyperbolic surfaces containing no closed geodesic of length less than $\epsilon$ is compact in $\modm_{g,n}$. 
The simple closed geodesic on the complete hyperbolic surface produces a simple closed curve on the conical hyperbolic surface that is no longer a geodesic but has length $\leq\epsilon(t)$ by the monotonicity result $h(\mathbf{L})\leq h(\mathbf{0})$ from \cite{STrVar}, where $h(\mathbf{L})$ is the conical metric and $h(\mathbf{0})$ is the complete metric.  The simple closed geodesic on the conical hyperbolic surface is in the same isotopy class and has length $\leq\epsilon(t)$.  To summarise, the existence of simple closed geodesics of lengths $\epsilon(t)\to0$ on the complete hyperbolic surfaces gives rise to simple closed geodesics on the conical hyperbolic surface with lengths tending to zero, as claimed. 

Each component of the complement of the simple closed geodesics with length tending to zero is a hyperbolic surface of constant positive area, by the Gauss-Bonnet theorem.  Hence the path of conical hyperbolic surfaces tends to a nodal conical hyperbolic surface in the limit.  Furthermore, the distance between cone points is bounded below by Lemma~\ref{angle condition repulsion}, hence the cone points remain a positive distance apart  in the limit.  So the limiting nodal hyperbolic surface $\Sigma$ has cone points labeled by $\mathbf{L}\in i\hspace{.5mm} [0,2\pi)^n$, and each irreducible component with cusps at nodes  satisfies the Gauss-Bonnet condition \eqref{areaineq} hence $\Sigma$ represents a nodal curve in $\overline{\modm}_{g,n}$.
\end{proof}

In general, the map \eqref{surj} is not a homeomorphism.  Here we introduce Hassett's compactifications $\overline{\modm}_{g,\mathbf{a}}$ for $\mathbf{a}\in(0,1]^n$ to help to understand \eqref{surj} in general.  Consider nodal curves with labeled points $(p_1,...,p_n)$, each weighted respectively by $\mathbf{a}=(a_1,...,a_n)$, and define the following stability condition.   A smooth curve is $\mathbf{a}$-stable if the weighted Euler characteristic $2-2g-\sum_{j=1}^na_j$ is negative.  A nodal curve $C$ is $\mathbf{a}$-stable if each irreducible component $C'\subset C$ is $\mathbf{a}'$-stable with respect to the weight $\mathbf{a}'$ given by the restriction of $\mathbf{a}$ to $C'$, where nodes are assigned weight 1.  
Hassett \cite{HasMod} proved that the space $\overline{\modm}_{g,\mathbf{a}}$ of $\mathbf{a}$-stable nodal curves is compact.  Geometrically, the compactification parametrises families of stable curves such that a set of points $\{p_j\mid j\in J\subset\{1,...,n\}\}$ for $|J|>1$ can coincide only when $\sum_{j\in J}a_j<1$.  One consequence is that labeled points cannot coincide with nodes. The Deligne-Mumford compactification corresponds to $a_j=1$ for $j=1,...,n$ (and more generally when $a_j+a_k>1$ for all $j,k$) in which case labeled points can never coincide.

%Each irreducible component of a limiting nodal curve satisfies McOwen's condition for existence of a conical hyperbolic metric in its conformal class.  

Stable curves admit complete nodal hyperbolic metrics, i.e. complete hyperbolic metrics on each irreducible component, in the given conformal class.  Similarly,
when $\mathbf{a}=a(\mathbf{L})$, Hassett's stability condition is equivalent to the existence of nodal hyperbolic metrics with cone angles $|\mathbf{L}|$ in the given conformal class since the stability condition on each irreducible component agrees with the Gauss-Bonnet condition \eqref{areaineq}:
\[  \sum_{j\in S}|L_j|<2\pi(2g'-2+|S|)\ \Leftrightarrow\ 2-2g'-\sum_{j\in S}a_j<0.
\]
%\[\sum_{j\in S}|L_j|<2\pi(|S|-1)\ \Leftrightarrow\ \sum_{j\in S}a_j>1.\]

Since Deligne-Mumford stability implies $\mathbf{a}$-stability, there is a natural continuous map: 
%As usual we consider the complex analytic topology underlying its analytification of these algebraic compactifications.
\begin{equation}  \label{DMtoHas} 
\overline{\modm}_{g,n}\to\overline{\modm}_{g,\mathbf{a}}.
\end{equation}
For  $S\subset\{1,...,n\}$ and $|S|\geq 2$, define the divisor $D_S\subset\overline{\modm}_{g,n}$ to consist of all stable curves that contain a rational irreducible component with one node and the marked points $p_S$.   For each $S$ satisfying $\sum_{j\in S}a_j<1$, the map \eqref{DMtoHas} sends $D_S$ to a point, and is one-to-one on the dense subset given by the complement of all such $D_S$. 
\begin{proposition}   \label{factorHas}
The map \eqref{surj} factors through a homeomorphism with the Hassett space
\[\overline{\modm}_{g,n}\to\overline{\modm}_{g,\mathbf{a}}\stackrel{\cong}{\to}\overline{\modm}_{g,n}^{\mathrm{hyp}}(\mathbf{L})
\]
for $\mathbf{a}=a(\mathbf{L})$.
\end{proposition}
\begin{proof}
The statement of the proposition is proven in Proposition~\ref{mainch} for the case that $|L_j|+|L_k|<2\pi,\ \forall j\neq k$ which is equivalent to $a_j+a_k>1,\ \forall j\neq k$, since  $\overline{\modm}_{g,n}\cong\overline{\modm}_{g,\mathbf{a}}$ by \cite{HasMod}.
So we may assume that for some non-singleton $S\subset\{1,...,n\}$, $\sum_{j\in S}a_j<1$.  The argument uses a comparison with the complete case via the homeomorphism $\modm_{g,n}^{\mathrm{hyp}}(\mathbf{L})\cong\modm_{g,n}\cong\modm_{g,n}^{\mathrm{hyp}}(\mathbf{0})$, as in the proof of Proposition~\ref{mainch}, although the conclusion of the comparison differs, leading to collapsed components.  Consider a path $\gamma:(0,1]\to\overline{\modm}_{g,n}$ such that $\gamma:(0,1)\to\modm_{g,n}$ and $\gamma(1)\in D_S$, i.e. $\gamma(1)$ defines a nodal curve with a rational irreducible component with marked points $p_S$ and exactly one node. 

In each complete hyperbolic surface corresponding to $\gamma(t)$ for $t<1$, there exists a unique simple closed geodesic bounding a disk with cusps at $p_S$.   The path of simple closed geodesics becomes a cusp in the limit, so the geodesic has length $\epsilon(t)$ where $\lim_{t\to 1}\epsilon(t)=0$.  %This is a consequence of Mumford's compactness criterion \cite{MumRem}, that given $\epsilon>0$, the subset of $\modm_{g,n}$ corresponding to hyperbolic surfaces containing no closed geodesic of length less than $\epsilon$ is compact in $\modm_{g,n}$.  Here we are using the homeomorphism $\modm_{g,n}\cong\modm_{g,n}^{\mathrm{hyp}}(\mathbf{0})$.  

When the surface is equipped with the conical hyperbolic metric with cone angles $|\mathbf{L}|$, a simple closed geodesic surrounding the cone points $p_S$ no longer exists since the Gauss-Bonnet formula would produce contradictory negative area:
\[\text{area}(C)=-2\pi+\sum_{j\in S}(2\pi-|L_j|)=2\pi(-1+\sum_{j\in S}a_j)<0.\]
Nevertheless, the simple closed geodesic on the complete hyperbolic surface produces a simple closed curve on the conical hyperbolic surface that is no longer a geodesic but has length $\leq\epsilon(t)$ by the monotonicity result $h(\mathbf{L})\leq h(\mathbf{0})$ from \cite{STrVar}, where $h(\mathbf{L})$ is the conical metric and $h(\mathbf{0})$ is the complete metric.  To summarise, the existence of a small length simple closed geodesic on the complete hyperbolic surface gives rise to a small length simple closed curve on the conical hyperbolic surface.  

Let $L\leq\epsilon(t)$ be the length of the simple closed curve which encloses a hyperbolic disk of area $A$ with cone points of angles $\theta_j=|L_j|$.
The disk satisfies an isoperimetric inequality due to A.D. Alexandrov, see for example \cite{IzmSim}:
\[ L^2 \geq 2(2\pi-\sum_{j\in S}(2\pi-\theta_j))A+A^2\geq A^2\]
where the second inequality uses the assumption $2\pi-\sum_{j\in S}(2\pi-\theta_j)\geq 0$.  This gives an area bound $A\leq\epsilon(t)\to0$ as $t\to 1$.

An area bound on the hyperbolic disk with cone points implies a diameter bound by the following standard argument.  Given one of the cone points $p_i$, consider an embedded radius $r$ disk neighbourhood that does not meet any other cone point nor the simple closed curve boundary.  Its area is bounded by the total area $A$, so %given by $2\theta_i\sinh^2(r/2)$ hence 
$2\theta_i\sinh^2(r/2)<A\leq\epsilon(t)$ hence the supremum over all such $r$ is bounded by $C\epsilon(t)^{1/2}$.  Summing over all cone points gives a diameter bound $C'\epsilon(t)^{1/2}$.  The diameter bound shows that the disk containing the cone points contracts to a single point in the limit.  

The limit point of the contracted disk is a cone point of angle $\theta=\sum_{j\in S}(2\pi-|L_j|)$, due to Lemma~\ref{angles merging}, on the hyperbolic surface corresponding to the endpoint $\gamma(1)$.   This applies to all points of $D_S\subset\overline{\modm}_{g,n}$, and  so $D_S$ maps to a single point in $\overline{\modm}_{g,n}^{\mathrm{hyp}}(\mathbf{L})$ under the map \eqref{surj}.  Since the map $\overline{\modm}_{g,n}\to\overline{\modm}_{g,\mathbf{a}}$ contracts $D_S$ to a point, and this holds for all non-singleton $S\subset\{1,...,n\}$ such that $\sum_{j\in S}a_j<1$, we have proven that the map \eqref{surj} factors through the Hassett space.

It remains to show that the induced map $\overline{\modm}_{g,\mathbf{a}}\to\overline{\modm}_{g,n}^{\mathrm{hyp}}(\mathbf{L})$ is a homeomorphism.  The proof is similar to the proof of Proposition~\ref{mainch}.  It is enough to show that the induced map is injective, since it is surjective and the domain is compact.  Given any nodal curve $C$ represented by a point in $\overline{\modm}_{g,\mathbf{a}}$, there are two cases.   In the first case cone points merge to produce a new cone point, and in the second case cusps at a node appear (and both cases may occur together).  In both cases, this arises from a finite length path with respect to the metric $\gpl$ using the comparison \eqref{metcomp} with the Weil-Petersson metric.  In the first case, if cone points merge, then as in the proof of Proposition~\ref{path} any path in $\modm_{g,n}$ that converges to this curve point has finite length with respect to the metric $\gpl$, since the length is less than the length of the path with respect to $g_{\mathrm{WP}}$ which is finite by \cite{MasExt}. Hence the path converges to the given nodal hyperbolic surface inside the metric completion $\overline{\modm}_{g,n}^{\mathrm{hyp}}(\mathbf{L})$ as required. In the second case, when cusps at a node appear, on each irreducible component $Y\subset C$ there exists a unique hyperbolic metric with cone angle at each $p_j\in Y$ given by $|L_j|$, and cusps at nodes.  Existence of a conical hyperbolic metric is guaranteed by \cite{McOPoi} applied to $Y$ since it satisfies the Gauss-Bonnet condition \eqref{areaineq} or equivalently $\sum_{j\in S}a_j>1$.  This follows automatically if it has positive genus or more than one node, and by the assumption that for any rational irreducible component with marked points $p_S$ and exactly one node then $\sum_{j\in S}a_j>1$.  The union over all irreducible components of these smooth hyperbolic surfaces forms a nodal hyperbolic surface which, by Proposition~\ref{path} occurs along a finite length path of surfaces converging to a nodal surface, represents a point in the metric completion $\overline{\modm}_{g,n}^{\mathrm{hyp}}(\mathbf{L})$.  This produces a unique point in $\overline{\modm}_{g,n}^{\mathrm{hyp}}(\mathbf{L})$ for every nodal curve in $\overline{\modm}_{g,n}$, showing that the induced map is injective and that the induced map $\overline{\modm}_{g,\mathbf{a}}\to\overline{\modm}_{g,n}^{\mathrm{hyp}}(\mathbf{L})$ is a homeomorphism. 
\end{proof}

\subsection{Local behaviour near boundary components} 
Here we further study the behaviour of the universal curve over the metric completion $\overline{\modm}_{g,n}^{\mathrm{hyp}}(\mathbf{L})$ by characterising possible interactions between boundary components in a family of hyperbolic surfaces.   Lemmas~\ref{angle condition repulsion} and \ref{angles merging}, which give the behaviour of cone angles that may merge together, are required in the proofs of Propositions~\ref{mainch} and \ref{factorHas}.  %The local arguments here complement the arguments in Section~\ref{compl} which rely on the existence of a complete hyperbolic metric conformally equivalent to a given conical hyperbolic metric on a global surface.   
 Various cases involving geodesic boundary components and cone angles are given in Lemmas~\ref{length and small angle repulsion} - \ref{angle length merge}, and in particular a consequence of Lemma~\ref{angle length merge} shows the failure of nodal surfaces to produce a compactification in the general case. 

The first case, involving small cone angles and geodesic boundary lengths, generalises the positive lower bound on the distance between two cuspidal boundary components, calculated as the distance between horocycles of fixed radius, over all hyperbolic surfaces---it is $\log(4)$ for radius $1/2$ horocycles.  As the horocycles approach each other, a cusp forms elsewhere on the hyperbolic surface.  More generally, several cuspidal boundary components may approach each other, and $\log(4)$ remains a lower bound for the distance  between horocycles of radius $1/2$. Lemma~\ref{length and small angle repulsion} is a well-known consequence of the proof of Mumford's compactness criterion in \cite{FMaPri}---nevertheless we give here a proof with techniques common to the subsequent cases.%For more general $\modm_{g,n}^{\mathrm{hyp}}(\mathbf{L})$, the possible interaction between geodesic, cuspidal and conical boundary components can lead to the failure of Mumford's compactness criterion which we describe in this section. 
\begin{lemma}[\cite{BusCol,DPaCol,FMaPri,KeeCol}] \label{length and small angle repulsion}
Given $L_j\in\br_{> 0}\cup i[0,\pi)$, for any surface in $\cM_{g,n}^{\mathrm{hyp}}(L_1,\ldots,L_n)$ there exists a positive lower bound on distances between boundary components.
\end{lemma}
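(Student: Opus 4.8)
The plan is to associate to each boundary component $\beta_j$ a \emph{standard embedded neighbourhood} whose size depends only on the datum $L_j$, and to show that these neighbourhoods are pairwise disjoint; since $\mathbf{L}$ is fixed throughout $\Mlength$, the resulting lower bound on distances is then automatically uniform. Concretely, to a geodesic boundary of length $\ell_j>0$ I attach the collar of half-width $w(\ell_j)=\operatorname{arcsinh}\!\big(1/\sinh(\ell_j/2)\big)$, to a cusp ($L_j=0$) an embedded horoball region, and to a cone point of angle $\theta_j\in(0,\pi)$ an embedded hyperbolic cone disk of radius $r(\theta_j)$. Each of these has a definite size determined by $L_j$ alone. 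If the neighbourhoods of two distinct components are disjoint then any path joining the components must cross both, so $d(\beta_p,\beta_q)$ is at least the sum of the two sizes, a positive constant depending only on $\mathbf{L}$. The whole problem therefore reduces to the \textbf{disjointness} of standard neighbourhoods of distinct boundary components.

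To prove disjointness I would argue by the geometry of the minimising arc. Let $\gamma$ be a shortest geodesic arc realising $d=d(\beta_p,\beta_q)$; it meets any geodesic boundary orthogonally and terminates at a cusp or cone point when present. Crucially, a length-minimising geodesic cannot pass through a cone point of angle $<\pi$ as an interior point, since that would require the cone angle on each side to be at least $\pi$; this is the first essential use of the hypothesis $\theta_j<\pi$. A regular neighbourhood of $\beta_p\cup\gamma\cup\beta_q$ is a topological pair of pants $P$ whose two distinguished cuffs carry the data of $\beta_p,\beta_q$ and whose third boundary is a simple closed curve $\delta$. Realising $P$ as the unique hyperbolic pair of pants with these cuffs and geodesic $\delta^\ast$, and using that a geodesic pair of pants is convex in $X$, the distance $d$ equals the length of the common perpendicular (seam) between the two distinguished cuffs inside $P$.

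It then remains to bound this seam length below. When both cuffs are geodesics of lengths $\ell_p,\ell_q$ and $\delta^\ast$ has length $\ell_\delta\ge 0$, the standard pair-of-pants identity gives
\[
\cosh d=\frac{\cosh(\ell_\delta/2)+\cosh(\ell_p/2)\cosh(\ell_q/2)}{\sinh(\ell_p/2)\sinh(\ell_q/2)}\ \ge\ \coth(\ell_p/2)\coth(\ell_q/2)\ >\ 1,
\]
so $d$ is bounded below by a positive constant depending only on $\ell_p=L_p$ and $\ell_q=L_q$. The cusp cases are recovered as the limit $\ell_j\to 0$. For a cone point of angle $\theta_j<\pi$ I would replace the corresponding cuff by a vertex of angle $\theta_j/2$ in the half-pants and read off the analogous identity from the resulting right-angled hyperbolic pentagon or quadrilateral (formally the substitution $\ell_j\mapsto i\theta_j$, $\cosh(\ell_j/2)\mapsto\cos(\theta_j/2)$); for $\theta_j<\pi$ the relevant vertex angle is less than $\pi/2$ and the seam length is again bounded below by a positive constant depending only on the cuff data.

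The main obstacle is the cone-point case of the last step, together with the degenerate configurations for $\delta$. I expect the genuine work to lie in (i) making precise the hyperbolic ``pair of pants'' with one or two cone-point cuffs, verifying its existence, uniqueness and convexity in $X$ so that the minimising arc indeed stays inside it, and (ii) establishing the trigonometric lower bound for the seam uniformly in the cone case. One must also dispatch the cases where $\delta$ is null-homotopic or peripheral (which either cannot occur for distinct essential components or reduce to a cylinder/cusp computation). The hypothesis $\theta_j<\pi$ is used in an essential way throughout---both to forbid minimisers through cone points and to keep the half-pants vertex angle below $\pi/2$---which is consistent with the failure of the statement once cone angles exceed $\pi$, as exhibited later in Lemma~\ref{angle length merge}.
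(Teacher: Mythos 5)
Your proposal follows essentially the same route as the paper: both arguments enclose the two boundary components in a hyperbolic pair of pants (the paper shortens the boundary of a tubular neighbourhood of a connecting path to a simple closed geodesic, whereas you take a regular neighbourhood of a minimising arc), cut it along the seams into right-angled hexagons, pentagons or quadrilaterals according to whether the cuffs are geodesics or cone points, and bound the seam length below by the corresponding hyperbolic trigonometric identity. The pentagon and quadrilateral identities you anticipate via the formal substitution $\ell_j\mapsto i\theta_j$, $\cosh(\ell_j/2)\mapsto\cos(\theta_j/2)$ are exactly the formulas from Buser's book that the paper applies, with the quadrilateral lower bound resting on $\theta_1+\theta_2<2\pi$, which your hypothesis $\theta_j<\pi$ guarantees.
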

\begin{proof}
Let $L_1,\ldots,L_n\in\br_{> 0}\cup i[0,\pi)$ correspond to boundary components $\beta_1,\ldots,\beta_n$, respectively. Given an element of Teichm\"uller space $X\in\cT_{g,n}^{\mathrm{hyp}}(L_1,\ldots,L_n)$, define $P_{jk}(X)$ as the set of simple paths connecting $\beta_j$ and $\beta_k$.  Consider the functions on Teichm\"uller space \begin{align*}
    f_{jk}:\cT_{g,n}^{\mathrm{hyp}}(L_1,\ldots,L_n) &\longrightarrow \bbR_{\geq 0} \\
    X &\longmapsto \min_{\gamma\in P_{jk}(X)}{\ell(\gamma)},
\end{align*}
where $\min$ is taken over lengths of paths. The proof of the Lemma amounts to showing that $f_{jk}$ is bounded below by some positive number. \\ \indent
Let $X\in\cT_{g,n}^{\mathrm{hyp}}(L_1,\ldots,L_n)$, let $\gamma$ be any path from $L_1$ to $L_2$, and let $K$ be a closed tubular neighbourhood of $\beta_1\cap\beta_2\cap \gamma\subset X$. Its boundary $\partial K$ is a simple closed loop, and since $L_1,L_2\in\br_{\geq 0}\cup i[0,\pi)$ we can shorten $\partial K$ to produce a simple, closed geodesic $c\subset X$. This produces a hyperbolic pair-of-pants with boundary components $\beta_1$, $\beta_2$ and $c$. In the pair-of-pants, let $\gamma'$ denote the shortest curve in the isotopy class of $\gamma$, and suppose that $\ell(\gamma')=\delta$. We show that $\delta$ is bounded below by some positive number. There are three possibilities for the boundary components $L_1$ and $L_2$, which require separate treatment. Firstly, both $\beta_1$ and $\beta_2$ could be geodesic boundary components or cusps. Secondly, one of $\beta_1$ or $\beta_2$ could be geodesic boundary component or cusp, and the other a cone point. Thirdly, both $\beta_1$ and $\beta_2$ could be cone points. The first case is proven in \cite{BusCol} and \cite{KeeCol}, and the third case is proven in Dryden-Parlier \cite{DPaCol}.  We include proofs here for comparison with the case of large cone angle behaviour.\\
\textit{\underline{Case 1. $L_1,L_2\in\bbR_{\geq 0}$:}}\\ \indent
Cut along $\gamma'$ and the other two seams of the hyperbolic pair-of-pants, to obtain two right angle hyperbolic hexagons. One such hexagon is shown in Figure \ref{Hyperbolic hexagon}. 

%%\ifdefined\tkz
\begin{figure}[!htb] \centering
\begin{tikzpicture}[scale=3]
    \tkzDefPoint(0,0){O}
    \tkzDefPoint(1,0){Z}

    % Define points
    \tkzDefPoint(-0.6,-0.1){A}
    \tkzDefPoint(-0.25,0.5){P}
    \tkzDefPoint(0.4,0.4){Q}
    \tkzDefPoint(0.4,-0.3){C}
    \tkzDefPoint(0.1,0.5){B}
    \tkzDefPoint(0.1,-0.6){D}
    \tkzDefPoint(-0.4,-0.5){E}

    % Draw big circle
    \tkzDrawCircle[fill=white](O,Z)

    % I have no idea what this does,
    % but when I remove it the rest of the image (the big circles)
    % get visible. So it seems to crop the image to the size of the circle
    \tkzClipCircle(O,Z)

    % "Clip" angles
%    \tkzDrawCircle[fill,orthogonal through=A and C,color=white](O,Z)%[fill,orthogonal through=A and C,color=white](O,Z)
%    \tkzDrawCircle[fill,orthogonal through=A and B,color=white](O,Z)
%    \tkzDrawCircle[fill,orthogonal through=C and B,color=white](O,Z)

    % Draw big circle
    \tkzDrawCircle(O,Z)

    % Draw lines
 %   \tkzDrawCircle[orthogonal through=A and P,color=red](O,Z) %{$\delta$}
  \tkzDefCircle[orthogonal through=A and P](O,Z) \tkzGetPoint{L1}
\tkzDrawCircle[color=red](L1,A)

%    \tkzDrawCircle[orthogonal through=P and Q,color=blue](O,Z)
\tkzDefCircle[orthogonal through=P and Q](O,Z) \tkzGetPoint{L2}
\tkzDrawCircle[color=blue](L2,P)

%    \tkzDrawCircle[orthogonal through=Q and C,color=blue](O,Z)
\tkzDefCircle[orthogonal through=Q and C](O,Z) \tkzGetPoint{L3}
\tkzDrawCircle[color=blue](L3,Q)
   
%    \tkzDrawCircle[orthogonal through=A and E,color=blue](O,Z) %{$a$}
\tkzDefCircle[orthogonal through=A and E](O,Z) \tkzGetPoint{L4}
\tkzDrawCircle[color=blue](L4,A)

%    \tkzDrawCircle[orthogonal through=E and D,color=blue](O,Z) %{$z$}
\tkzDefCircle[orthogonal through=E and D](O,Z) \tkzGetPoint{L5}
\tkzDrawCircle[color=blue](L5,E)

%    \tkzDrawCircle[orthogonal through=D and C,color=blue](O,Z)  %{$c$}
\tkzDefCircle[orthogonal through=D and C](O,Z) \tkzGetPoint{L6}
\tkzDrawCircle[color=blue](L6,D)

    \node at (-0.5,0.2) {$\delta$} ;
    \node at (0.1,0.5) {$\frac{L_2}{2}$} ;
    \node at (-0.55,-0.3) {$\frac{L_1}{2}$} ;
    \node at (0.3,-0.5) {$\frac{c}{2}$} ;
\end{tikzpicture}
\caption{Right angled hyperbolic hexagon.} \label{Hyperbolic hexagon}
\end{figure}
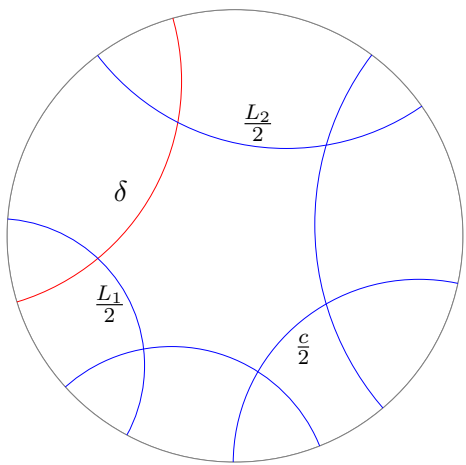  
%\fi
 The hyperbolic hexagon formula $\cosh(c)=\sinh(a)\sinh(b)\cosh(\delta)-\cosh(a)\cosh(b)$ \cite[Theorem 2.4.1, (i)]{BusGeo} implies
    \begin{align}
        \cosh(\delta)&=\frac{\cosh(\frac{c}{2})+\cosh(\frac{L_1}{2})\cosh(\frac{L_2}{2})}{\sinh(\frac{L_1}{2})\sinh(b)} \geq\frac{1+\cosh(\frac{L_1}{2})\cosh(\frac{L_2}{2})}{\sinh(\frac{L_1}{2})\sinh(\frac{L_2}{2})} \nonumber \\
                     &>\frac{1}{\sinh(\frac{L_1}{2})\sinh(\frac{L_2}{2})} +1. \label{ineq1}
    \end{align}

    This means that $\delta>\varepsilon(L_1,L_2)$, for some positive function $\varepsilon$ of the lengths of the boundary geodesics. \\
\textit{\underline{Case 2. $L_1\in\bbR_{> 0}$ and $L_2\in i[0,\pi)$:}}\\ \indent
Let $\theta_2=|L_2|	$ be the cone angle defined by $L_2$. Cut along $\gamma'$ and the other two seams of the hyperbolic pair-of-pants, to obtain two hyperbolic pentagons, each with $4$ right angles and one angle $\frac{\theta_2}{2}$ at the cone point. One such pentagon is shown in Figure \ref{Hyperbolic pentagon}.
\begin{figure}[!htb] \centering
%\ifdefined\tkz
\begin{tikzpicture}[scale=3]
    \tkzDefPoint(0,0){O}
    \tkzDefPoint(1,0){Z}

    % Define points
    \tkzDefPoint(-0.6,-0.1){A}
    \tkzDefPoint(0.4,-0.3){C}
    \tkzDefPoint(0.1,0.5){p}
    \tkzDefPoint(0.1,-0.6){D}
    \tkzDefPoint(-0.4,-0.5){E}

    % Draw big circle
    \tkzDrawCircle[fill=white](O,Z)

    % I have no idea what this does,
    % but when I remove it the rest of the image (the big circles)
    % get visible. So it seems to crop the image to the size of the circle
    \tkzClipCircle(O,Z)

    % Draw angle
   % \tkzMarkAngles[fill= orange,size=0.3cm,opacity=.3](A,p,C)
  % \tkzLabelAngle[pos=0.2](A,p,C){$\frac{\theta_2}{2}$};

% \draw (A) -- (p) -- (C) 
\pic[fill=orange,angle radius=0.9cm,,opacity=.3] {angle = A--p--C};
   %  \tkzLabelAngle[pos=0.2](A,p,C){$\frac{\theta_2}{2}$};
   \node at (0.05,0.3) {$\frac{\theta_2}{2}$} ;

    % "Clip" angles
 %   \tkzDrawCircle[fill,orthogonal through=A and C,color=white](O,Z)
   \tkzDefCircle[orthogonal through=A and C](O,Z) \tkzGetPoint{M1}
\tkzDrawCircle[fill,color=white](M1,A)
   % \tkzDrawCircle[fill,orthogonal through=A and p,color=white](O,Z)
    \tkzDefCircle[orthogonal through=A and p](O,Z) \tkzGetPoint{M2}
\tkzDrawCircle[fill,color=white](M2,A)
    %\tkzDrawCircle[fill,orthogonal through=C and p,color=white](O,Z)
     \tkzDefCircle[orthogonal through=C and p](O,Z) \tkzGetPoint{M3}
\tkzDrawCircle[fill, color=white](M3,C)

    % Draw big circle
    \tkzDrawCircle(O,Z)

    % Draw lines
 %   \tkzDrawCircle[orthogonal through=A and p,color=red](O,Z) %{$\delta$}
  \tkzDefCircle[orthogonal through=A and p](O,Z) \tkzGetPoint{L1}
\tkzDrawCircle[color=red](L1,A)
 
 %   \tkzDrawCircle[orthogonal through=C and p,color=blue](O,Z) 
  \tkzDefCircle[orthogonal through=C and p](O,Z) \tkzGetPoint{L2}
\tkzDrawCircle[color=blue](L2,C)

%    \tkzDrawCircle[orthogonal through=A and E,color=blue](O,Z) %{$a$}
 \tkzDefCircle[orthogonal through=A and E](O,Z) \tkzGetPoint{L3}
\tkzDrawCircle[color=blue](L3,A)

   % \tkzDrawCircle[orthogonal through=E and D,color=blue](O,Z) %{$z$}
    \tkzDefCircle[orthogonal through=E and D](O,Z) \tkzGetPoint{L4}
\tkzDrawCircle[color=blue](L4,E)

%    \tkzDrawCircle[orthogonal through=D and C,color=blue](O,Z)  %{$c$}
\tkzDefCircle[orthogonal through=D and C](O,Z) \tkzGetPoint{L5}
\tkzDrawCircle[color=blue](L5,D)
    
    \node at (-0.25,0.25) {$\delta$} ;
    \node at (-0.55,-0.3) {$\frac{L_1}{2}$} ;
    \node at (0.3,-0.5) {$\frac{c}{2}$} ;

    % Draw points and label them
    \tkzDrawPoints[color=black,fill=red,size=5](p)
  %  \tkzLabelPoints(p)
\end{tikzpicture}
%\fi
\caption{Hyperbolic pentagon with all but one right angle.} \label{Hyperbolic pentagon}
\end{figure}
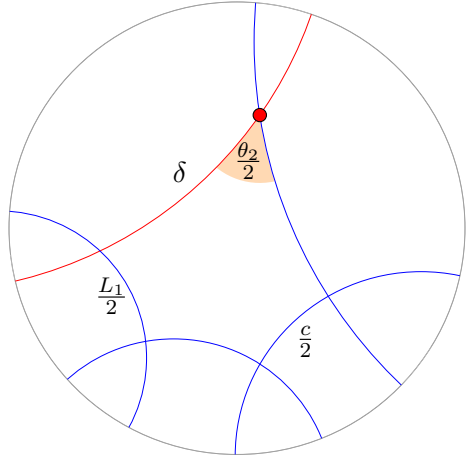
A hyperbolic pentagon formula from \cite[Example 2.2.7, (iv), (v)]{BusGeo} gives 
\begin{align}
\cosh(\delta)&=%\frac{1}{\sin\left(\frac{\theta_2}{2}\right)\sinh(\frac{L_1}{2})}\sqrt{\cos^2(\frac{\theta_2}{2})+2\cos(\frac{\theta_2}{2})\cosh\left(\frac{L_1}{2}\right)\cosh\left(\frac{c}{2}\right)+\cosh^2\left(\frac{L_1}{2}\right)+\cosh^2\left(\frac{c}{2}\right)-1} \label{pentagon computation line}\\
 \frac{1}{\sin\left(\frac{\theta_2}{2}\right)\sinh(\frac{L_1}{2})}\sqrt{\cos^2\left(\frac{\theta_2}{2}\right)+2\cos(\frac{\theta_2}{2})\cosh(\frac{L_1}{2})+ \cosh^2\left(\frac{L_1}{2}\right)+K} \nonumber \\
%&\geq \frac{1}{\sin\left(\frac{\theta_2}{2}\right)\sinh(\frac{L_1}{2})}\sqrt{\cos^2(\frac{\theta_2}{2})+2\cos(\frac{\theta_2}{2})\cosh(\frac{L_1}{2})+ \cosh^2\left(\frac{L_1}{2}\right)} \nonumber \\
&\geq\frac{\cos(\frac{\theta_2}{2})+ \cosh(\frac{L_1}{2})}{\sin(\frac{\theta_2}{2})\sinh(\frac{L_1}{2})}  
> \frac{\cosh(\frac{L_1}{2})}{\sinh(\frac{L_1}{2})}=\coth(\frac{L_1}{2})>1 \label{pentagon computation line}
\end{align}
where $K=2\cos(\frac{\theta_2}{2})\cosh\left(\frac{L_1}{2}\right)(\cosh\left(\frac{c}{2}\right)-1)+\cosh^2\left(\frac{c}{2}\right)-1>0$.
This means that $\delta>\varepsilon(L_1)$, for some positive function $\varepsilon$ of the length of the boundary geodesic. \\
\textit{\underline{Case 3. $L_1,L_2\in i[0,\pi)$:}}\\ \indent
Let $\theta_1=|L_1|$ and $\theta_2=|L_2|$ be the cone angles defined by $L_1$ and $L_2$, respectively. Cut along $\gamma'$ and the other two seams of the hyperbolic pair-of-pants, to obtain two hyperbolic quadrilaterals, each with $2$ right angles and two angles $\frac{\theta_1}{2}$ and $\frac{\theta_2}{2}$ at the cone points. One such quadrilateral is shown in Figure \ref{Hyperbolic quad2}.
\begin{figure}[!htb] \centering
%\ifdefined\tkz
\begin{tikzpicture}[scale=3]
    \tkzDefPoint(0,0){O}
    \tkzDefPoint(1,0){Z}

    % Define points
    \tkzDefPoint(-0.6,-0.1){q}
    \tkzDefPoint(0.4,-0.3){C}
    \tkzDefPoint(0.1,0.5){p}
    \tkzDefPoint(0.2,-0.5){D}
    \tkzDefPoint(0.1,-0.9){E}

    % Draw big circle
    \tkzDrawCircle[fill=white](O,Z)

    % I have no idea what this does,
    % but when I remove it the rest of the image (the big circles)
    % get visible. So it seems to crop the image to the size of the circle
    \tkzClipCircle(O,Z)

    % Draw angle
%    \tkzMarkAngles[fill= orange,size=0.3cm,opacity=.4](q,p,C)
\pic[fill=orange,angle radius=0.9cm,opacity=.3] {angle = q--p--C};
  %      \tkzLabelAngle[pos=0.2](q,p,C){$\frac{\theta_2}{2}$};
  \node at (0.05,0.3) {$\frac{\theta_2}{2}$} ;
    
    %\tkzMarkAngles[fill= orange,size=0.3cm,opacity=.4](D,q,p)
    \pic[fill=orange,angle radius=0.9cm,opacity=.3] {angle = D--q--p};
 %   \tkzLabelAngle[pos=0.4](D,q,p){$\frac{\theta_1}{2}$};
 \node at (-0.25,-0.05) {$\frac{\theta_1}{2}$} ;

    % "Clip" angles
 %   \tkzDrawCircle[fill,orthogonal through=q and C,color=white](O,Z)
      \tkzDefCircle[orthogonal through=q and D](O,Z) \tkzGetPoint{M1}
\tkzDrawCircle[fill,color=white](M1,q)
   % \tkzDrawCircle[fill,orthogonal through=q and p,color=white](O,Z)
         \tkzDefCircle[orthogonal through=q and p](O,Z) \tkzGetPoint{M2}
\tkzDrawCircle[fill,color=white](M2,q)
   % \tkzDrawCircle[fill,orthogonal through=C and p,color=white](O,Z);
         \tkzDefCircle[orthogonal through=C and p](O,Z) \tkzGetPoint{M3}
\tkzDrawCircle[fill,color=white](M3,C)
 %   \draw[blue] (-0.21,0.155) -- (0.29,-0.39);

    % Draw big circle
    \tkzDrawCircle(O,Z)

    % Draw lines
 %   \tkzDrawCircle[orthogonal through=q and p,color=red](O,Z) %{$\delta$}
     \tkzDefCircle[orthogonal through=q and p](O,Z) \tkzGetPoint{L1}
\tkzDrawCircle[color=red](L1,q)

%    \tkzDrawCircle[orthogonal through=C and p,color=blue](O,Z) 
     \tkzDefCircle[orthogonal through=C and p](O,Z) \tkzGetPoint{L2}
\tkzDrawCircle[color=blue](L2,C)

%    \tkzDrawCircle[orthogonal through=q and D,color=blue](O,Z) 
     \tkzDefCircle[orthogonal through=q and D](O,Z) \tkzGetPoint{L3}
\tkzDrawCircle[color=blue](L3,q)

 %   \tkzDrawCircle[orthogonal through=D and C,color=blue](O,Z)  %{$c$}
         \tkzDefCircle[orthogonal through=D and C](O,Z) \tkzGetPoint{L4}
\tkzDrawCircle[color=blue](L4,D)

     \node at (-0.25,0.25) {$\delta$};
 %   \node at (-0.4,0.15) {$\fdelta_1$} ;
 %   \node at (-0.1,0.4) {$\delta_2}$} ;    
 %   \node at (0,-0.18) {$h$} ;

    % Draw points and label them
    \tkzDrawPoints[color=black,fill=red,size=5](p)
   % \tkzLabelPoints(p)
    \tkzDrawPoints[color=black,fill=red,size=5](q)
   % \tkzLabelPoints(q)
    
\end{tikzpicture}
%\fi
\caption{Hyperbolic quadrilateral with two right angles and two non-right angles at cone points.} \label{Hyperbolic quad2}
\end{figure}
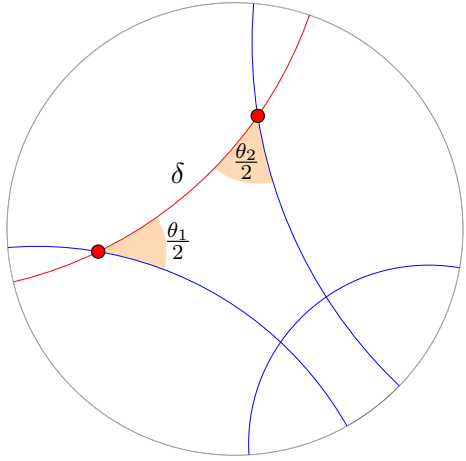
For any hyperbolic quadrilateral the formula $\cosh(\frac{c}{2})=\sin(\frac{\theta_1}{2})\sin(\frac{\theta_2}{2})\cosh(\delta)-\cos(\frac{\theta_1}{2})\cos(\frac{\theta_2}{2})$ \cite[Theorem 2.3.1, (i)]{BusGeo} gives  
\begin{align}
        \cosh(\delta)&=\frac{\cosh(\frac{c}{2})+\cos(\frac{\theta_1}{2})\cos(\frac{\theta_2}{2})}{\sin(\frac{\theta_1}{2})\sin(\frac{\theta_2}{2})}  \geq\frac{1+\cos(\frac{\theta_1}{2})\cos(\frac{\theta_2}{2})}{\sin(\frac{\theta_1}{2})\sin(\frac{\theta_2}{2})}>1 \label{quadrilateral}
    \end{align}
where the last inequality uses $\theta_1+\theta_2<2\pi$. This means that $\delta>\varepsilon(\theta_1,\theta_2)$, for some positive function $\varepsilon$ of the angles of the cone points.  The lower bound \eqref{quadrilateral} holds more generally for any $\theta_1+\theta_2<2\pi$, since this is enough for the hyperbolic quadrilateral to exist.  This arises below in Lemma~\ref{angle condition repulsion}.  
\end{proof}
\begin{remark}
Lemma~\ref{length and small angle repulsion} actually holds under slightly stronger conditions, by also allowing at most one cone angle to be equal to $\pi$. 
\end{remark}

\begin{remark} \label{except}
The lower bound \eqref{quadrilateral} is a universal lower bound on the distance between cone angles in any hyperbolic surface, {\em except} for the special case of a sphere with exactly three cone angles, or equivalently a pair of pants with cone angle boundary components, obtained by gluing together two copies of a hyperbolic triangle.  In this special case, the distance between the cone angles tends to zero as the area of the sphere, or hyperbolic triangle, tends to zero.  This is shown in Figure~\ref{meeting of dashed lines}.
\end{remark}

\begin{remark}
As a cone point or boundary geodesic acquires the behaviour of a cusp, $L_1\rightarrow 0$ or $\theta_1\rightarrow 0$ in the lower bounds \eqref{ineq1}, \eqref{pentagon computation line}  and \eqref{quadrilateral}  in Lemma~\ref{length and small angle repulsion} which implies that $\cosh{\delta}$ is unbounded as expected.  In this case one can instead measure distance between a horocycle around the cusp and any boundary component and produce a no-go zone outside a suitably chosen horocycle.  %Limiting behaviour of separation estimates imply that a suitably chosen horocycle would give an arbitrarily large separation between cusps and any other boundary components.   
\end{remark}

In the case of cone angles $L_j\in i[0,2\pi)$ where some angles are greater than $\pi$, we must use a different method to extend the result of Lemma \ref{length and small angle repulsion}.  We prove that there exists a  positive lower bound on distances between cone points, again with the exception of the pair of pants construction described in Remark~\ref{except}.  
\begin{lemma} \label{angle condition repulsion} 
Given lengths $L_j\in i[0,2\pi)$, if $|L_j|+|L_k|<2\pi$ for some $j,k\in\{1,\ldots,n\}$, then there exists a positive lower bound between the cone points $p_j$ and $p_k$. % the distance  defines a function on $\cM_{g,n}^{\mathrm{hyp}}(L_1,\ldots,L_n)$ which has a positive lower bound.
\end{lemma}
\begin{proof}
%Let $L_1,\ldots,L_n\in i[0,\pi)$ correspond to cone points $p_1,\ldots,p_n$ with cone angles $\theta_1=|L_1|,\ldots,\theta_n=|L_n|$, respectively. Let $X\in\modm_{g,n}^{\mathrm{hyp}}(L_1,\ldots,L_n)$ and let $\gamma$ be any path from $p_1$ to $p_2$. If $\theta_1,\theta_2\leq\pi$ with at most one being equal to $\pi$, then the argument from Lemma~\ref{length and small angle repulsion} implies that the length of $\gamma$ is bounded below and the lemma is proven. 

Define $\theta_j=|L_j|$.  Without loss of generality we assume that $(j,k)=(1,2)$, $\theta_1\geq\theta_2$ and furthermore that 
\[\theta_1>\pi\] 
so that $\theta_2<\pi$.  Since, otherwise, if $\theta_1\leq\pi$ then Lemma~\ref{length and small angle repulsion} applies to produce the positive lower bound.

Suppose that $p_1$ and $p_2$ are sufficiently close, and for the moment assume that no other cone points are nearby.   Then we may represent a disk neighbourhood $\cD$ of $p_1$ and $p_2$ locally via a region in the hyperbolic plane as follows. We model $\cD$ as a region in the hyperbolic  plane, as depicted in Figure~\ref{planar small cone angles}, 
   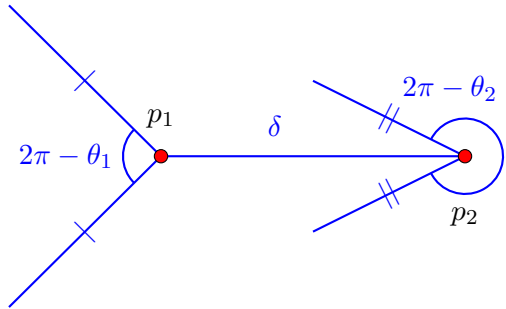
\begin{figure}[htb!]
   \centering
%\ifdefined\tkz
    \begin{tikzpicture}
    \draw[blue, thick]
    (-2,2) coordinate (a) node[black,left]  {}
    --node[sloped] {$|$} (0,0) coordinate (b) node[black,above] {} 
    --node[sloped] {$|$} (-2,-2) coordinate (c) node[black,right] {}
    pic[ draw=blue, -, angle eccentricity=1.2, angle radius=0.5cm]
    {angle=a--b--c};
    \node[text=blue] at (-1.25,0) {$2\pi-\theta_1$} ;
    
    \draw[blue, thick]
    (2,1) coordinate (e) node[black,left]  {}
    --node[sloped] {$||$} (4,0) coordinate (f) node[black,above] {} 
    --node[sloped] {$||$} (2,-1) coordinate (g) node[black,right] {}
    pic[ draw=blue, -, angle eccentricity=1.2, angle radius=0.5cm]
    {angle=g--f--e};
    \draw[blue,thick] (b) -- (f) ;
    \node[text=blue] at (1.5,0.4) {$\delta$};
    \node[text=blue] at (3.8,0.9) {$2\pi-\theta_2$};

        \tkzDrawPoints[color=black,fill=red,size=5](b) ;
        \node[text=black] at (0,0.5)  {$p_1$};
        \tkzDrawPoints[color=black,fill=red,size=5](f) ;
        \node[text=black] at (4,-0.8)  {$p_2$};
\end{tikzpicture}  
%\fi
\captionsetup{skip=0.9cm}
 \caption{A local depiction in the hyperbolic plane of two nearby cone points $p_1$ and $p_2$.}% whose angle sum is less than $2\pi$.} 
 \label{planar small cone angles}
\end{figure}
where geodesic rays from the cone points are identified, so that the marked angles of $2\pi-\theta_1$ and $2\pi-\theta_2$ are essentially deleted from the plane.  Note that geodesics appear as straight lines in the diagram.  Let $\delta$ be the separation of $p_1$ and $p_2$. If $\delta$ is small enough, since $\theta_1+\theta_2<2\pi$, there must be a point in which the geodesic rays corresponding to $p_1$ meet with those of $p_2$  as depicted in Figure~\ref{meeting of dashed lines}, essentially due to the Euclidean picture which approximates the hyperbolic picture at small enough scales.  Figure~\ref{meeting of dashed lines} can be constructed by gluing along its edges two identical hyperbolic triangles with interior angles $\theta_1/2$, $\theta_2/2$ and $\alpha$ for any $\alpha\in(0,\pi-\theta_1/2-\theta_2/2)$.
   \begin{figure}[htb!]
   \centering
 %\ifdefined\tkz
    \begin{tikzpicture}[scale=1.5]
    \draw[blue, thick]
    (-2.1,2.1) coordinate (a) node[black,left]  {}
    --node[sloped] {$|$} (0,0) coordinate (b) node[black,above] {} 
    --node[sloped] {$|$} (-2.1,-2.1) coordinate (c) node[black,right] {}
    pic[ draw=blue, -, angle eccentricity=1.2, angle radius=0.5cm]
    {angle=a--b--c};
    \node[text=blue] at (-0.9,0) {$2\pi-\theta_1$} ;
    
    \draw[blue, thick]
    (-1,1) coordinate (e) node[black,left]  {}
    --node[sloped] {$||$} (1,0) coordinate (f) node[black,above] {} 
    --node[sloped] {$||$} (-1,-1) coordinate (g) node[black,right] {}
    pic[ draw=blue, -, angle eccentricity=1.2, angle radius=0.5cm]
    {angle=g--f--e};
    \draw[blue,thick] (b) -- (f) ;
    \node[text=blue] at (0.35,0.125) {$\delta$};
    \node[text=blue] at (1.2,0.5) {$2\pi-\theta_2$};

        \tkzDrawPoints[color=black,fill=red,size=5](b) ;
        \node[text=black] at (0,0.25)  {$p_1$};
        \tkzDrawPoints[color=black,fill=red,size=5](f) ;
        \node[text=black] at (1.2,-0.5)  {$p_2$};
\end{tikzpicture} 
%\fi
 \captionsetup{skip=0.9cm}
 \caption{A local depiction in the hyperbolic plane of two cone points $p_1$ and $p_2$ whose angle sum is less than $2\pi$ arbitrarily close to one another.} \label{meeting of dashed lines}
\end{figure}
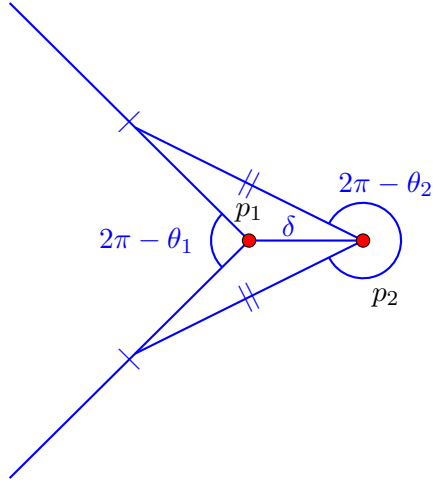
As $\delta$ increases, the intersection point moves out to infinity and the endpoints of the geodesic rays on the circle at infinity move together.  Then they pass through each so that the geodesic rays do not meet, and a shortest path between them, given by a geodesic meeting the rays perpendicularly, forms and moves towards $p_1$ and $p_2$ as the endpoints move apart.  In fact, due to identification of the geodesic rays, the perpendicular geodesic represents a simple closed geodesic surrounding $p_1$ and $p_2$ in the surface $X$. This is shown in red in Figure~\ref{surrounding angle geodesic}. 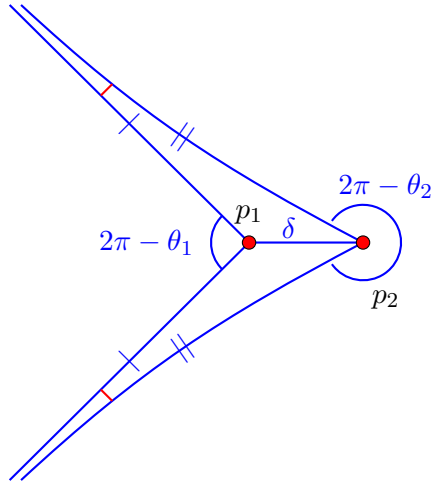
\begin{figure}[htb!]
   \centering
%\ifdefined\tkz
\begin{tikzpicture}[scale=1.5]
    \draw[blue, thick]
    (-2.1,2.1) coordinate (a) node[black,left]  {}
    to node[sloped] {$|$} (0,0) coordinate (b) node[black,above] {} 
    --node[sloped] {$|$} (-2.1,-2.1) coordinate (c) node[black,right] {}
    pic[ draw=blue, -, angle eccentricity=1.2, angle radius=0.5cm]
    {angle=a--b--c};
    \node[text=blue] at (-0.9,0) {$2\pi-\theta_1$} ;
    
    \draw[blue, thick]
    (-2,2.1) coordinate (e) node[black,left]  {}
    to[in=152, out=-43] node[sloped] {$||$} (1,0) coordinate (f) node[black,above] {} 
    to[in=43, out=208] node[sloped] {$||$} (-2,-2.1) coordinate (g) node[black,right] {}
    pic[ draw=blue, -, angle eccentricity=1.2, angle radius=0.5cm]
    {angle=g--f--e};
    \draw[blue,thick] (b) -- (1,0) ;
    \node[text=blue] at (0.35,0.125) {$\delta$};
    \node[text=blue] at (1.2,0.5) {$2\pi-\theta_2$};

     \draw[red, thick] (-1.3,1.3) to (-1.2,1.4);
     \draw[red, thick] (-1.3,-1.3) to (-1.2,-1.4);

        \tkzDrawPoints[color=black,fill=red,size=5](b) ;
        \node[text=black] at (0,0.25)  {$p_1$};
        \tkzDrawPoints[color=black,fill=red,size=5](f) ;
        \node[text=black] at (1.2,-0.5)  {$p_2$};
\end{tikzpicture} 
%\fi
 \captionsetup{skip=0.9cm}
 \caption{A local depiction in the hyperbolic plane of two nearby cone points $p_1$ and $p_2$ with angle sum less than $2\pi$, surrounded by a simple closed geodesic given by two red arcs.} \label{surrounding angle geodesic}
\end{figure}

We learn two important ideas from the previous construction.  Firstly, when $\delta$ is sufficiently small, a hyperbolic sphere with three cone points appears around $p_1$ and $p_2$ producing the pair of pants construction described in Remark~\ref{except}.  But such a situation cannot occur except if the ambient surface is a pair of pants, as it would imply the existence of a disconnected closed region in the surface.  Hence there is a positive lower bound on the distance between the two cone points.

Secondly, when cone points $p_1$ and $p_2$ whose angle sum is less than $2\pi$ are sufficiently close to produce the local model in Figure~\ref{surrounding angle geodesic}, then there exists a disk neighbourhood of $p_1$ and $p_2$ with boundary a simple closed geodesic.  In particular, this holds when  $\theta_1>\pi$, which contrasts with the behaviour when both $\theta_1$ and $\theta_2$ are less than $\pi$, covered by Lemma~\ref{length and small angle repulsion}, in which case %such a geodesically bounded disk neighbourhood exists around arbitrarily long geodesic paths from 
arbitrarily far $p_1$ and $p_2$ are contained in a geodesically bounded disk neighbourhood.  The simple closed geodesic and two cone points bound a hyperbolic pair of pants formed from gluing two identical quadrilaterals as in Case 3 of the proof of Lemma~\ref{length and small angle repulsion}.  The inequality \eqref{quadrilateral} applies to angles satisfying $\frac12(\theta_1+\theta_2)<\pi$ and gives a positive lower bound on the distance $\delta$ from $p_1$ to $p_2$.  

A less geometric proof of the existence  of Figures~\ref{meeting of dashed lines} and \ref{surrounding angle geodesic} and hence the existence of the lower bound, uses the following continuity argument.  The local holonomy of the cone points $p_1$ and $p_2$ angles $\theta_1$ and $\theta_2$ is represented by rotation matrices $S$ and $R$ which lie in conjugate $SO(2)$ subgroups of $SL(2,\br)$.  For large enough $\delta$, the holonomy of a loop surrounding $p_1$ and $p_2$ is hyperbolic.  As $\delta\to 0$, $S$ and $R$ tend towards the same $SO(2)$ subgroup of $SL(2,\br)$ and their product tends towards a rotation matrix of angle $\theta_1+\theta_2$ which is elliptic.  By the intermediate value theorem (applied to the trace), for some $\delta$ parabolic holonomy, i.e.\ a cusp, occurs.

When there are more than two cone points close together, if the local model now contains cone points inside the regions in Figures~\ref{meeting of dashed lines} and \ref{surrounding angle geodesic} then the intersection of the geodesic rays occurs earlier, i.e.\ for larger distance $\delta$ from $p_1$ to $p_2$, since any cone angle causes the geodesic rays to bend towards each other.  Thus the lower bound for the distance $\delta$ is greater than that calculated above.  Alternatively, the continuity argument can be applied to several cone angles approaching a single point and the product of several holonomy matrices to produce an elliptic holonomy.  
%Note that In the proof of Lemma \ref{length and small angle repulsion} we exploited the fact that any two boundary components together with an isotopy class of paths joining them could be separated from the remaining surface via a disk neighbourhood with geodesic boundary. Note that we still call a disk with cone points a disk. This gave rise to a local model in which the lower bound could be calculated.  This no longer holds when $\theta_1>\pi$, and any local model may contain many cone points close to $p_1$.

\end{proof}
If two cone points $p_j$ and $p_k$ have cone angles which satisfy $\theta_j+\theta_k>2\pi$ then there is no longer a lower bound on the distance between these points.  The following Lemma proves a more general statement which shows that a collection of cone points may merge.
\begin{lemma}\cite{MZhCon}
\label{angles merging}
Given $\cM_{g,n}^{\mathrm{hyp}}({\bf L})$ with ${\bf L}\in i[0,2\pi)^n$ and $S\subset\{1,\ldots,n\}$ satisfying $\sum_{j\in S}|L_j|>2\pi (|S|-1)$, there does not exist a lower bound on distances between cone points.  The cone points $p_j$ with cone angles $|L_j|$ for $j\in S$ may merge to form a single cone point of cone angle $|L_{S}|=\sum_{j\in S}|L_j|-2\pi (|S|-1)$.
\end{lemma}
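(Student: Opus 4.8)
The plan is to localise the analysis to a neighbourhood of the cluster $\{p_j\}_{j\in S}$ and to study the holonomy of a simple loop $\gamma$ enclosing it. Since the remaining cone points and boundary components are, by Lemmas~\ref{length and small angle repulsion} and \ref{angle condition repulsion}, bounded away from the cluster (outside the exceptional pair-of-pants situation of Remark~\ref{except}), such a $\gamma$ exists and separates the cluster from the rest of the surface. The statement has two parts: first that the mutual distances $d(p_j,p_k)$, $j,k\in S$, admit no positive lower bound, and second that the collapsing cluster limits to a single cone point of angle $\theta_S=\sum_{j\in S}\theta_j-2\pi(|S|-1)$. I would establish the second, structural, assertion first, as it dictates what the degenerating family must look like.

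For the merged angle I would argue at the level of holonomy, in the spirit of the continuity argument closing the proof of Lemma~\ref{angle condition repulsion}. Each cone point $p_j$ contributes an elliptic holonomy $\rho_j$, namely a rotation by $\theta_j$ about the developed image of $p_j$. As the cluster collapses, after conjugating the developing map the rotation centres coincide, so the holonomy of $\gamma$, a product of the $\rho_j$ in cyclic order, converges to a rotation by $\sum_{j\in S}\theta_j$ about the common centre. As a hyperbolic isometry a rotation by $\alpha$ coincides with a rotation by $\alpha-2\pi k$; taking $k=|S|-1$ yields a rotation by $\theta_S$, and the two inequalities $2\pi(|S|-1)<\sum_{j\in S}\theta_j<2\pi|S|$ (the left being the hypothesis, the right since each $\theta_j<2\pi$) place $\theta_S\in(0,2\pi)$. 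Hence the limiting holonomy of $\gamma$ is elliptic and represents exactly a cone point of angle $\theta_S$. For $|S|=2$ this can be confirmed in the trigonometric style of Lemma~\ref{length and small angle repulsion}: the length $c$ of a curve surrounding $p_1,p_2$ obeys \eqref{quadrilateral}, and at coincidence $\delta=0$ the right-hand side equals $-\cos\!\big(\tfrac{\theta_1+\theta_2}{2}\big)=\cos\!\big(\tfrac{\theta_S}{2}\big)<1$, so $c$ is imaginary with $c=i\theta_S$, i.e.\ the surrounding curve is not a geodesic but a cone point of angle $\theta_S$.

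To prove the absence of a lower bound I would then exhibit a genuine degenerating family. Writing $X_0$ for the limit surface, which carries a cone point $q$ of angle $\theta_S$ where the cluster sat, I would reconstruct nearby surfaces by excising a small metric disk about $q$ and gluing in a \emph{cluster block} $B_\delta$: a hyperbolic cone disk containing the $|S|$ cone points of angles $\{\theta_j\}_{j\in S}$ at mutual distances of order $\delta$, with boundary matched to that of the excised disk. Gauss--Bonnet makes this consistent: a disk with the $|S|$ cone points and the same boundary geodesic curvature has the same area as the removed $\theta_S$-cone disk, since $\sum_{j\in S}(2\pi-\theta_j)-(2\pi-\theta_S)=0$, and both areas tend to $0$ as $\delta\to0$, so $B_\delta$ collapses onto $q$ and recovers $X_0$ in the limit. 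The existence of $B_\delta$ for all small $\delta$, together with control of the collapse, can be obtained either by a direct cut-and-paste construction in the hyperbolic plane or, more robustly, by invoking existence and uniqueness of hyperbolic cone metrics in a fixed conformal class \cite{McOPoi,STrWei} and letting the conformal positions $x_j$, $j\in S$, collide, the resulting metrics having $d(p_j,p_k)\to0$.

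The main obstacle is precisely this existence-and-collapse step. The holonomy computation only certifies that the limiting configuration is consistent with a cone of angle $\theta_S$; it does not by itself produce surfaces with arbitrarily small clusters. Moreover the elementary constructions available when $\sum_{j\in S}\theta_j<2\pi(|S|-1)$ break down here: the surrounding-geodesic pair of pants, the doubled triangles of Figure~\ref{meeting of dashed lines}, and the relevant three-cone spheres all acquire negative area once $\sum_{j\in S}\theta_j>2\pi(|S|-1)$, so the degeneration genuinely leaves the range of those models and must be handled by a rescaling (blow-up) analysis near the forming cone point. This is the analytic heart of the argument, carried out in \cite{MZhCon}, and is where I would expect the real work to lie.
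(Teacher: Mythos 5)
Your proposal is correct in outline, and its skeleton matches the paper's: both proofs ultimately rest on \cite{MZhCon} for the existence of the merging deformation (the paper simply cites it for that statement, and you correctly identify it as the analytic heart), and both then verify the value of the limiting angle by a local-model computation. The difference is in how that angle is identified. You use the holonomy of an enclosing loop: a product of elliptic rotations whose centres coalesce, giving a rotation by $\sum_{j\in S}\theta_j$, which you then reduce mod $2\pi$ to $\theta_S$. As you yourself concede, this only determines the cone angle up to multiples of $2\pi$, so it is a consistency check rather than an identification. The paper instead works with the local conformal expression of the conical metric, $g_{\cD}=f(z)\prod_{j\in S}|z-\zeta_j|^{2(\theta_j/2\pi-1)}|dz|^2$, and observes that as $\zeta_j\to 0$ the exponents add, giving literally the local normal form $f(z)|z|^{2(\theta_S/2\pi-1)}|dz|^2$ of a cone of angle $\theta_S$; the hypothesis $\sum_{j\in S}\theta_j>2\pi(|S|-1)$ together with $\theta_j<2\pi$ enters only to place $\theta_S\in(0,2\pi)$. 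This conformal computation is more economical than your two-step plan: it pins down the geometric angle exactly (no mod-$2\pi$ ambiguity) and, once one invokes existence of conical metrics in a fixed conformal class \cite{McOPoi} and lets the conformal positions collide---which is precisely your route (b)---it simultaneously exhibits the collapsing family, making your separate cluster-block gluing and Gauss--Bonnet bookkeeping unnecessary. Your trigonometric check for $|S|=2$ via \eqref{quadrilateral}, giving $\cosh(c/2)=\cos(\theta_S/2)$ at coincidence, is a nice supplement not present in the paper.
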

\begin{proof}
Define ${\bm{\theta}}=-i\mathbf{L}\in [0,2\pi)^n$ and let $p_j$ for $j\in S$ denote cone points satisfying the hypothesis of the lemma.   A conical hyperbolic metric can be continuously deformed so that the $p_j$ merge with resulting cone angle of $\theta_{S}=\sum_{j\in S}\theta_j-2\pi (|S|-1)$.  This is proven in \cite{MZhCon}.  The resulting cone angle arises by summing the complementary angles in the local picture, as shown for two points in Figure~\ref{planar merging cone angles}, or  via the following local form of conical hyperbolic metrics.
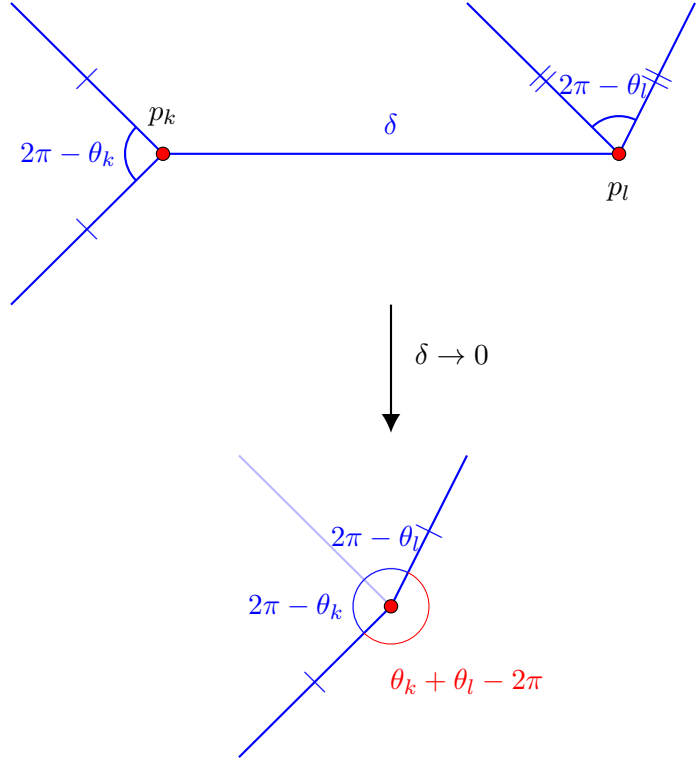
\begin{figure}[htb!]
   \centering
%\ifdefined\tkz
    \begin{tikzpicture}
    \draw[blue, thick]
    (-2,2) coordinate (a) node[black,left]  {}
    --node[sloped] {$|$} (0,0) coordinate (b) node[black,above] {} 
    --node[sloped] {$|$} (-2,-2) coordinate (c) node[black,right] {}
    pic[ draw=blue, -, angle eccentricity=1.2, angle radius=0.5cm]
    {angle=a--b--c};
    \node[text=blue] at (-1.25,0) {$2\pi-\theta_k$} ;
    
    \draw[blue, thick]
    (4,2) coordinate (e) node[black,left]  {}
    --node[sloped] {$||$} (6,0) coordinate (f) node[black,above] {} 
    --node[sloped] {$||$} (7,2) coordinate (g) node[black,right] {}
    pic[ draw=blue, -, angle eccentricity=1.2, angle radius=0.5cm]
    {angle=g--f--e};
    \draw[blue,thick] (b) -- (f) ;
    \node[text=blue] at (3,0.4) {$\delta$};
    \node[text=blue] at (5.8,0.9) {$2\pi-\theta_l$};
    
    \draw[thick,->] (3,-2) -- (3,-3.7);
    \node[text=black] at (3.8,-2.65)  {$\delta \rightarrow 0$};
    
    \draw[blue, thick,opacity=0.15]
    (1,-4) coordinate (k) node[black,left]  {}
    -- (3,-6) coordinate (l) node[black,above] {} 
    --(1,-8) coordinate (m) node[black,right] {}
    pic[ draw=blue, -, angle eccentricity=1.2, angle radius=0.5cm]
    {angle=k--l--m};
    \node[text=blue] at (1.75,-6) {$2\pi-\theta_k$} ;
    
    \draw[blue, thick,opacity=0.15]
    (1,-4) coordinate (r) node[black,left]  {}
    -- (3,-6) coordinate (s) node[black,above] {} 
    -- (4,-4) coordinate (t) node[black,right] {}
    pic[ draw=blue, -, angle eccentricity=1.2, angle radius=0.5cm]
    {angle=t--s--r};

    %make other stuff darker 
    \pic[ draw=blue, -, angle eccentricity=1.2, angle radius=0.5cm]
    {angle=k--l--m};
    \pic[ draw=blue, -, angle eccentricity=1.2, angle radius=0.5cm]
    {angle=t--s--r};
    \draw[blue, thick](3,-6) coordinate (s) node[black,above] {} 
    --node[sloped] {$|$} (4,-4) coordinate (t) node[black,right] {};
    \draw[blue, thick](3,-6) (3,-6) coordinate (l) node[black,above] {} 
    --node[sloped] {$|$} (1,-8) coordinate (m) node[black,right] {};
    
    \node[text=blue] at (2.8,-5.1) {$2\pi-\theta_l$};
    \pic[draw=red, -, angle eccentricity=1.2, angle radius=0.5cm]
    {angle=m--s--t};
    \node[text=red] at (4,-7) {$\theta_k+\theta_l-2\pi$}  ;

        \tkzDrawPoints[color=black,fill=red,size=5](b) ;
        \node[text=black] at (0,0.5)  {$p_k$};
        \tkzDrawPoints[color=black,fill=red,size=5](f) ;
        \node[text=black] at (6,-0.5)  {$p_l$};
        \tkzDrawPoints[color=black,fill=red,size=5](l) ;
\end{tikzpicture}
%\fi
  \captionsetup{skip=0.9cm}
 \caption{A depiction in the hyperbolic plane of two cone angles $p_k$ and $p_l$ merging together.} \label{planar merging cone angles}
\end{figure}
 Around a cone point $p_k$ with cone angle $\theta_k\in(0,2\pi)$ the metric takes the form $g=f(z_k)|z_k|^{2\left(\frac{\theta_k}{2\pi}-1\right)}|dz_k|^2$, where $z_k$ is a local conformal coordinate and $f$ is a smooth positive function \cite{McOPoi, MonPoi}. Suppose that cone points $p_j$ are sufficiently close together, so that they exist in some local neighbourhood $\cD$ with local conformal coordinate $z$. Suppose also that in $\cD$ the point $p_j$ is at $z=\zeta_j$. Locally, the metric takes the form \begin{equation*}
g_\cD=f(z)\left(\prod_{j\in S}|z-\zeta_j|^{2\left(\frac{\theta_j}{2\pi}-1\right)}\right)|dz|^2,
\end{equation*}
and in the limit at $\zeta_j\to 0$ this approaches  \begin{equation} \label{limit metric}
f(z)|z|^{\sum_{j\in S}2\left(\frac{\theta_j}{2\pi}-1\right)}|dz|^2=f(z)|z|^{2\left(\frac{\theta_S}{2\pi}-1\right)}|dz|^2.
\end{equation}
Since by assumption $\sum_{j\in S}\theta_j>2\pi (|S|-1)$ with each $\theta_j<2\pi$, we deduce that $\theta_S\in (0,2\pi)$ and \eqref{limit metric} gives the form of a conical hyperbolic metric with cone angle $\theta_S$. 
\end{proof}

\noindent  The following result gives a converse to Lemma~\ref{angles merging}.
 \begin{lemma}
\label{converse}
For $\cM_{g,n}^{\mathrm{hyp}}(L_1,\ldots,L_n)$ with ${\bf L}=i{\bm{\theta}}\in i[0,2\pi)^n$, if $S\subset\{1,\ldots,n\}$ satisfies $\sum_{j\in S}\theta_j<2\pi (|S|-1)$ then there exists a lower bound on the diameter of a neighbourhood containing the cone points $\{p_j\}_{j\in S}$.
\end{lemma}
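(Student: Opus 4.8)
The plan is to argue by contradiction and, around a collapsing cluster of cone points, to produce a separating simple closed geodesic that pinches to a cusp, splitting off a genus-zero piece of fixed positive area. This is the geometric converse of the merging in Lemma~\ref{angles merging}, and it specialises to Lemma~\ref{angle condition repulsion} when $|S|=2$. Assume, contrary to the claim, that there is a sequence in $\cM_{g,n}^{\text{hyp}}(\mathbf L)$ along which the cone points $\{p_j\}_{j\in S}$ are contained in neighbourhoods of diameter tending to $0$, so that all mutual distances $d(p_j,p_k)$, $j,k\in S$, tend to $0$.

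First I would enclose the cluster by a simple closed curve and shorten it, via the curve-flow of Section~\ref{Bers}, to a simple closed geodesic $c$ bounding an embedded region $R$ of genus zero whose interior cone points are exactly $\{p_j\}_{j\in S}$. Applying Gauss--Bonnet to $R$, whose boundary $c$ is geodesic and which has Euler characteristic $1$, gives $\operatorname{Area}(R)=\sum_{j\in S}(2\pi-\theta_j)-2\pi=2\pi(|S|-1)-\sum_{j\in S}\theta_j=:\eta$, which is \emph{positive} by hypothesis and, crucially, independent of $\ell(c)$. As in the continuity argument of Lemma~\ref{angle condition repulsion}, the holonomy around $c$ is the product of the elliptic rotations about the $p_j$ and tends to a single rotation as the cluster collapses; since a geodesic has hyperbolic holonomy while the limit is elliptic, the trace must cross $2$, so $\ell(c)\to 0$ and $c$ pinches to a cusp. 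Thus $R$ converges to a complete hyperbolic cone surface of genus zero with cone angles $\{\theta_j\}_{j\in S}$ and one cusp, of area exactly $\eta>0$.

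The contradiction then comes from the fact that this limit is a fixed non-degenerate surface: on a genus-zero cone surface with a single cusp, prescribed angles $\{\theta_j\}_{j\in S}$ and fixed positive area $\eta$, the cone points are confined to the thick part and sit at definite mutual distances. For $|S|=2$ this lower bound is precisely inequality~\eqref{quadrilateral}; in general the same conclusion holds because the moduli space of such pieces is non-empty exactly when $\eta>0$ and its surfaces have uniformly bounded geometry away from the cusp. Hence the $d(p_j,p_k)$ cannot all tend to $0$, contradicting the assumption, and any neighbourhood containing $\{p_j\}_{j\in S}$ has diameter bounded below. A useful simplification is to reduce $|S|$ beforehand: if some proper $S'\subset S$ satisfies $\sum_{j\in S'}\theta_j>2\pi(|S'|-1)$ then, by Lemma~\ref{angles merging}, the points $\{p_j\}_{j\in S'}$ behave from larger scales like a single cone point of angle $\theta_{S'}=\sum_{j\in S'}\theta_j-2\pi(|S'|-1)$, and one checks that the inequality $\sum_{j\in S}\theta_j<2\pi(|S|-1)$ passes to the reduced set $\tilde S=(S\setminus S')\cup\{\ast\}$, since $\sum_{j\in\tilde S}\theta_j=\sum_{j\in S}\theta_j-2\pi(|S'|-1)<2\pi(|\tilde S|-1)$; so an induction on $|S|$ reduces to the case where no proper subset merges.

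The main obstacle is the construction in the second step for $|S|>2$: producing a single embedded simple closed geodesic that encloses the entire cluster and no other cone point, and showing that the only degeneration available as the cluster collapses is the pinching of $c$, rather than a partial collapse of a sub-cluster or an interaction with cone points outside $S$. The inductive reduction controls sub-clusters, and Lemma~\ref{angle condition repulsion} together with Lemma~\ref{length and small angle repulsion} keep the remaining cone points a definite distance away, but verifying embeddedness of $c$ and identifying $R$ as a genus-zero subsurface requires care with the developing map and the configuration of the $p_j$. As in Remark~\ref{except}, the statement must exclude the degenerate case in which $R$ is all of $X$---a sphere whose only cone points are $\{p_j\}_{j\in S}$---where there is no separating geodesic and the whole surface may shrink.
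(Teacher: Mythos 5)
There is a genuine gap in your central construction, and it occurs precisely in the regime you need it. Your step producing the enclosing simple closed geodesic $c$ can fail: by your own holonomy-continuity argument, once the cluster has small diameter the free homotopy class of a simple loop enclosing exactly $\{p_j\}_{j\in S}$ has holonomy close to a rotation by $\sum_{j\in S}\theta_j$, which is elliptic unless this sum lies in $2\pi\bbZ$; an elliptic class has no geodesic representative, so curve shortening cannot output $c$ for the late terms of your collapsing sequence, and there is then no region $R$ to which to apply Gauss--Bonnet. (When the sum \emph{is} a multiple of $2\pi$ the trace tends to $\pm2$ without crossing it, so your intermediate-value step also needs separate care; and even when a shortest representative exists, the flow of Section~\ref{Bers} may return a piecewise geodesic caught on cone points rather than a smooth geodesic disjoint from them.) The disappearance of $c$ is in fact the useful phenomenon---this is how Lemma~\ref{angle condition repulsion} concludes, since a simple essential loop with elliptic holonomy would force the ambient surface to be the three-cone-point sphere of Remark~\ref{except}---but your proof is structured to \emph{require} $c$ rather than to exploit its non-existence. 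Your final step is also circular as written: transferring the positive distances between cone points on the cusped limit sphere back to the collapsing sequence requires geometric convergence of the regions $R$ with control at the cone points (your ``uniformly bounded geometry away from the cusp''), which is essentially the compactness statement (Proposition~\ref{conecomp}) that this lemma exists to support.

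The good news is that your own inductive reduction, which is correct (including the verification that the hypothesis descends to $\tilde S$), is exactly the mechanism of the paper's proof, and it makes your entire enclosing-geodesic argument unnecessary. Once the reduction terminates, no pair in $S$ can merge, i.e.\ every pair satisfies $\theta_j+\theta_k<2\pi$ (both you and the paper ignore the borderline equality); Lemma~\ref{angle condition repulsion} applied to any single such pair then gives a positive lower bound on the distance between those two points, hence on the diameter of any neighbourhood containing the whole cluster. That is precisely the paper's argument: base case $|S|=2$ by Lemma~\ref{angle condition repulsion}; if some pair has angle sum less than $2\pi$, done; otherwise all pairs exceed $2\pi$, so the two closest points can merge by Lemma~\ref{angles merging} and may be treated as a single cone point of angle $\theta_k+\theta_\ell-2\pi$, and induction applies to the reduced set. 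Replacing your steps two through five by this one application of Lemma~\ref{angle condition repulsion} turns your proposal into the paper's proof.
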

\begin{proof}
When $|S|=2$, then there exists cone points $p_j$ and $p_k$ with cone angles satisfying $\theta_j+\theta_k<2\pi$ in which case the proof of Lemma~\ref{angle condition repulsion} shows that there exists a lower bound on the distance between $p_j$ and $p_k$, hence a lower bound on the diameter of a neighbourhood.  For $|S|>2$ we can proceed by induction.  If $\sum_{j\in S}\theta_j<2\pi (|S|-1)$ and suppose first that there exists $k,\ell\in S$ with $\theta_k+\theta_\ell<2\pi$, then there exists a lower bound on the distance between $p_k$ and $p_\ell$, hence a lower bound on the diameter of a neighbourhood containing the cone points $\{p_j\}_{j\in S}$.  Thus we may assume that $\theta_k+\theta_\ell>2\pi$ for all $k,\ell\in S$.  Suppose there is a path in $\cM_{g,n}^{\mathrm{hyp}}(L_1,\ldots,L_n)$ so that the cone points $\{p_j\}_{j\in S}$ merge.  Consider the closest two points, say $p_k$ and $p_\ell$.  They can merge to form a new cone point $p_{k\ell}$ of angle $\theta_{k\ell}=\theta_k+\theta_\ell-2\pi$.  Just before they merge, the holonomy around a path containing $p_k$ and $p_\ell$ is arbitrarily close to the holonomy around the cone point $p_{k\ell}$, and in particular elliptic.  So we can treat these two points like the cone point $p_{k\ell}$ and reduce the problem to the $|S|-1$ cone points $\{p_j\}_{j\in S\backslash\{k,\ell\}}\cup \{p_{k\ell}\}$ labeled by $S'=S\backslash\{k,\ell\}\cup\{k\ell\}$ that satisfy $\sum_{j\in S'}\theta_j<2\pi (|S'|-1)$.  By induction, there exists a lower bound on the diameter of a neighbourhood containing these cone points. 
\end{proof}

%Another situation where we see that the natural compactification of the moduli space of hyperbolic surfaces is not homeomorphic to the Deligne-Mumford compactification is when there are cone points whose cone angle sum is sufficiently large.
%The introduction of boundary geodesics or angles that sum to greater than $2\pi$ breaks down this result. We show this in the following two lemmas. 
The following interaction between boundary geodesics and cone angles greater than $\pi$ shows that non-nodal hyperbolic surfaces may arise as limits of families. 
\begin{lemma} \label{angle length merge}
Consider $\cM_{g,n}^{\mathrm{hyp}}(L_1,\ldots,L_n)$, with $L_j\in i(\pi,2\pi)$ and $L_k\in\bbR_{>0}$ sufficiently large for some $k\neq j$. There does not exists a lower bound on distances between the corresponding cone point and boundary geodesic.  
%\textcolor{red}{will adjust this and next lemma to comment more specifically on nodes, merging occurring before reaching nodes elsewhere}
\end{lemma}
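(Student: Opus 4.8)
The plan is to reduce the problem to a single hyperbolic pair of pants and to re-use the pentagon formula \eqref{pentagon computation line} from Case~2 of Lemma~\ref{length and small angle repulsion}, now in the regime $\theta_j>\pi$ where the lower bound argument there breaks down. Concretely, I would split off from the surface a pair of pants $P$ whose boundary consists of the geodesic $\beta_k$ of length $L_k$, the cone point $p_j$ of angle $\theta_j$, and a third simple closed geodesic $c$ whose length is a free internal parameter, gluing the complementary surface back along $c$ so that the boundary data $(L_1,\ldots,L_n)$ and the genus are unchanged. The distance from $p_j$ to $\beta_k$ inside $P$ is the length $\delta$ of their common perpendicular seam, and the distance in the ambient surface is at most $\delta$; hence it suffices to exhibit configurations with $\delta\to 0$.

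Cutting $P$ along its three seams yields two congruent hyperbolic pentagons, each with four right angles and one angle $\theta_j/2$ at the cone vertex, exactly as in Figure~\ref{Hyperbolic pentagon}. The identity \eqref{pentagon computation line} is a trigonometric consequence of the pentagon relation and holds for all $\theta_j\in(0,2\pi)$; simplifying it (using $\sinh^2(\delta)=\cosh^2(\delta)-1$ and collecting terms) gives the clean closed form
\[
\sinh(\delta)=\frac{\bigl|\cos(\tfrac{\theta_j}{2})\cosh(\tfrac{L_k}{2})+\cosh(\tfrac{c}{2})\bigr|}{\sin(\tfrac{\theta_j}{2})\sinh(\tfrac{L_k}{2})}.
\]
When $\theta_j\in(\pi,2\pi)$ we have $\cos(\theta_j/2)<0$, so the numerator can vanish. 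Indeed, if $L_k$ is large enough that $\cosh(L_k/2)>1/|\cos(\theta_j/2)|$, then $u^\ast:=-\cos(\theta_j/2)\cosh(L_k/2)>1$, so $c^\ast:=2\operatorname{arccosh}(u^\ast)>0$ is a genuine geodesic length, and at $c=c^\ast$ the numerator is zero, i.e.\ $\delta=0$. Since $\delta$ depends continuously on $c$, choosing $c$ slightly larger than $c^\ast$ produces non-degenerate pairs of pants with $\delta>0$ arbitrarily small, so there is no positive lower bound. This is precisely the force of ``$L_k$ sufficiently large'': the threshold $\cosh(L_k/2)>1/|\cos(\theta_j/2)|$ tightens as $\theta_j\to\pi^+$ and relaxes to any $L_k>0$ as $\theta_j\to 2\pi^-$, matching the contrast with Lemma~\ref{length and small angle repulsion}, whose repulsion bound used $\cos(\theta_j/2)>0$.

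Two points require care. First, one must check that the hyperbolic pentagon with an obtuse vertex angle $\theta_j/2\in(\tfrac{\pi}{2},\pi)$ genuinely exists and decomposes $P$; here the two seams emanating from the cone point split the angle $\theta_j<2\pi$ into two admissible convex angles, and since the simplified relation is a polynomial identity in $\cos(\theta_j/2)$, $\cosh(L_k/2)$, $\cosh(c/2)$, the formula of \cite{BusGeo} analytically continues to the obtuse case with $\sinh(\delta)\ge 0$ and $c^\ast>0$ certifying the configuration. Second, one must realise the configuration inside $\cM_{g,n}^{\text{hyp}}(L_1,\ldots,L_n)$, which only requires a simple closed curve cutting off a pair of pants containing $\beta_k$ and $p_j$, existing whenever the surface is not itself this pair of pants, with the complementary piece carrying a hyperbolic structure for boundary length near $c^\ast$. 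The geometric content mirrors Figure~\ref{surrounding angle geodesic}: the small wedge $2\pi-\theta_j<\pi$ removed at $p_j$ lets the cone point nestle against the long geodesic $\beta_k$ while being enclosed by the closed geodesic $c$, in exact analogy with two cone points of angle sum less than $2\pi$. I expect the main obstacle to be the careful justification of the pentagon decomposition and the sign analysis of $\cos(\theta_j/2)$ in the obtuse regime, since this is exactly the place where the repulsion mechanism of Lemma~\ref{length and small angle repulsion} breaks.
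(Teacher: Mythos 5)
Your mechanism for the generic case is sound and is essentially the paper's own: in a pentagon with four right angles and an angle exceeding $\tfrac{\pi}{2}$ at the cone vertex, the numerator in \eqref{pentagon computation line} can be driven to zero once $L_k$ is large enough that $|\cos|\cdot\cosh>1$, by varying the length of an auxiliary simple closed geodesic; your simplification $\sinh(\delta)\sin(\tfrac{\theta_j}{2})\sinh(\tfrac{L_k}{2})=\cosh(\tfrac{c}{2})+\cos(\tfrac{\theta_j}{2})\cosh(\tfrac{L_k}{2})$ is precisely the identity the paper invokes in this proof. The difference is the choice of pair of pants: you take one bounded by $\beta_k$, the cone point itself and a single auxiliary geodesic $c$, realised as the double of one symmetric pentagon with angle $\tfrac{\theta_j}{2}$, whereas the paper takes a pair of pants with three geodesic boundaries $\beta_k,\gamma_1,\gamma_2$ containing $p_j$ in its \emph{interior}, cuts it into three pentagons and a hexagon with $\alpha_1+\alpha_2+\beta=\theta_j$, and varies $\ell(\gamma_2)$. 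Your variant needs one auxiliary curve instead of two and is cleaner where it applies.

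The genuine gap is the case $(g,n)=(0,3)$, which the lemma covers and the paper handles separately, and which your construction cannot reach even in principle: if the surface is itself the pair of pants containing $\beta_k$ and $p_j$, there is no interior curve $c$ to vary --- all three boundary data are fixed --- and in your symmetric model $\delta$ is a function of $(L_k,c,\theta_j)$ alone, hence pinned at a single value. The paper's $(0,3)$ argument (Figure~\ref{pentagon for crowned surfacesz}) works precisely because it does \emph{not} assume the seam decomposition is symmetric when $\theta_j>\pi$: the two pentagons carry angles $\alpha$ and $\theta_j-\alpha$ at $p_j$, and there is an internal deformation --- fix $L_1'$, vary the seam length $\ell$, let $L_2'$ adjust --- producing a path in $\cM_{0,3}^{\text{hyp}}(L_1,L_2,i\theta_j)$ with fixed boundary data along which $\delta_2\to0$. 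So your assertion that cutting along the seams ``yields two congruent hyperbolic pentagons'' is exactly the rigidity statement that cannot be assumed (and, per the paper's treatment, fails) in the regime of this lemma; it is harmless for the constructive direction of your generic case, but it is what blocks your method from saying anything about $(0,3)$. Secondarily, your gluing step silently assumes the complementary piece exists with cuff length near $c^\ast$: this requires the cone angles among the remaining boundary components to satisfy the Gauss--Bonnet constraint for the complementary topological type, $\sum_{i}\theta_i<2\pi\bigl(2g-2+(n-1)\bigr)$, which is strictly stronger than non-emptiness of $\cM_{g,n}^{\text{hyp}}(L_1,\ldots,L_n)$ (since $\theta_j<2\pi$) and should be verified or added as a hypothesis.
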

\begin{proof}
Define $\theta_j=-iL_j\in (\pi,2\pi)$. Let $p_j$ and $\beta_k$ denote a cone point and a geodesic boundary component, respectively, which fit the description given in the lemma. First, suppose that $(g,n)\neq(0,3)$, so we may find two geodesic curves $\gamma_1$ and $\gamma_2$, which are not homotopic to each other (other than the $g=1$ case, where they are equal) or $\beta_k$, and together with $\beta_k$ form a hyperbolic pair-of-pants $P$ with $p_j$ in its interior. Cut along the seams of $P$ along with unique geodesics from $p_j$ meeting the three boundary geodesics perpendicularly, to decompose $P$ into three geodesic pentagons and one geodesic hexagon, as depicted in Figure~\ref{cutting into pentagons and hexagon}.
\begin{figure}[ht]  
	\centerline{\includegraphics[height=6.5cm]{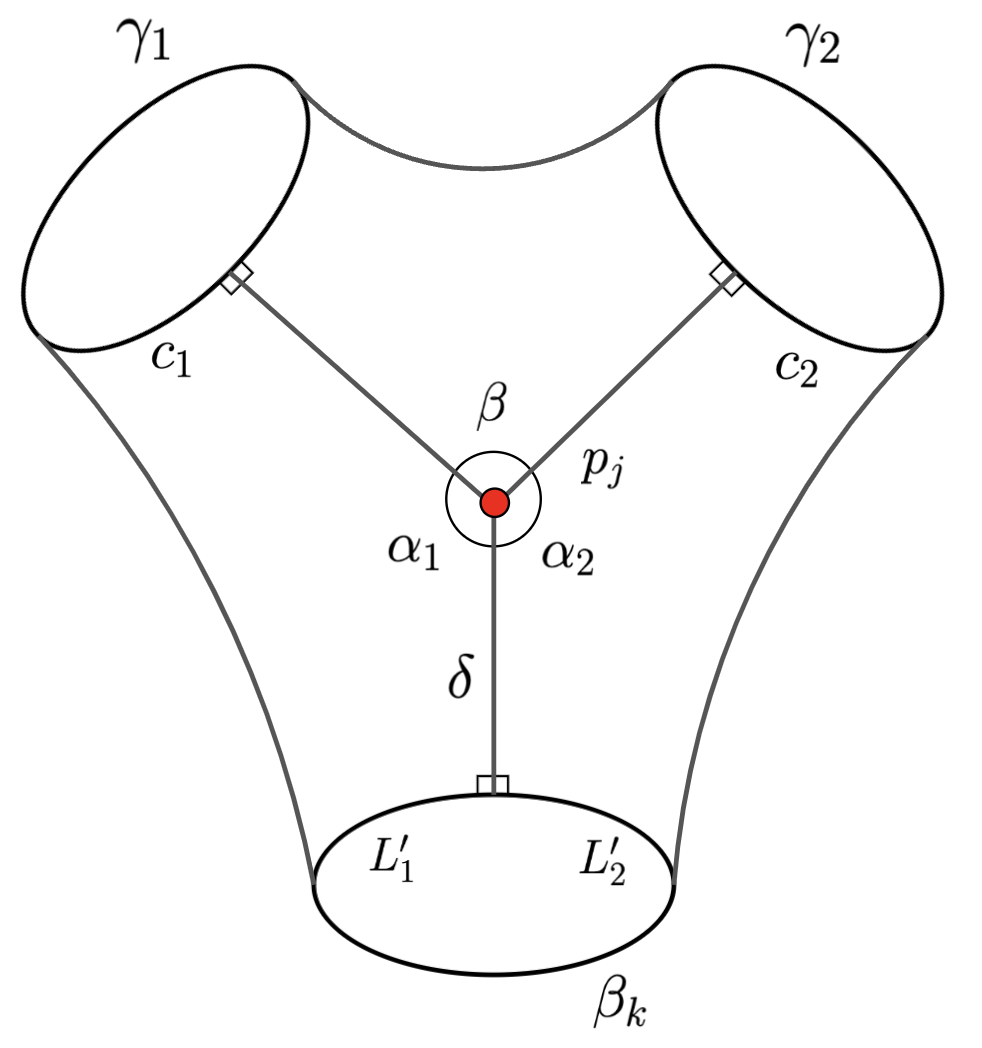}}
	\caption{Hyperbolic pair-of-pants with a single cone point of cone angle greater than $2\pi$.}
	\label{cutting into pentagons and hexagon} 
\end{figure} 
    Here, $\alpha_1+\alpha_2+\beta=\theta$ and $\delta$ is the distance between $p_j$ and $\beta_k$. Consider one of the two pentagons with $\delta$ as a side length. For $i=1$ or $2$, let $L_i' \in (0,L_k)$ denote the side length of the pentagon coming from $\beta_k$ and let $c_i$ denote the relevant length component of $\gamma_i$ which makes up the opposite side length to $\beta_k$. This pentagon is depicted in Figure~\ref{pentagon for crowned surfaces}. 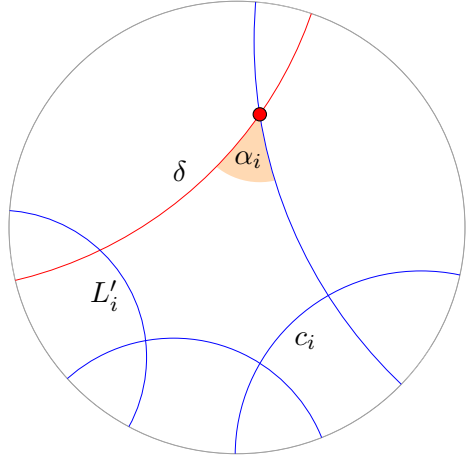
\begin{figure}[!htb] \centering
%\ifdefined\tkz
\begin{tikzpicture}[scale=3]
    \tkzDefPoint(0,0){O}
    \tkzDefPoint(1,0){Z}

    % Define points
    \tkzDefPoint(-0.6,-0.1){A}
    \tkzDefPoint(0.4,-0.3){C}
    \tkzDefPoint(0.1,0.5){p}
    \tkzDefPoint(0.1,-0.6){D}
    \tkzDefPoint(-0.4,-0.5){E}

    % Draw big circle
    \tkzDrawCircle[fill=white](O,Z)

    % I have no idea what this does,
    % but when I remove it the rest of the image (the big circles)
    % get visible. So it seems to crop the image to the size of the circle
    \tkzClipCircle(O,Z)

    % Draw angle
 %   \tkzMarkAngles[fill= orange,size=0.3cm,opacity=.3](A,p,C)
 \pic[fill=orange,angle radius=0.9cm,,opacity=.3] {angle = A--p--C};
  %  \tkzLabelAngle[pos=0.2](A,p,C){$\alpha_i$};
  \node at (0.05,0.3) {$\alpha_i$} ;

    % "Clip" angles
 %   \tkzDrawCircle[fill,orthogonal through=A and C,color=white](O,Z)
          \tkzDefCircle[orthogonal through=A and C](O,Z) \tkzGetPoint{M1}
\tkzDrawCircle[fill,color=white](M1,q)
 %   \tkzDrawCircle[fill,orthogonal through=A and p,color=white](O,Z)
          \tkzDefCircle[orthogonal through=A and p](O,Z) \tkzGetPoint{M2}
\tkzDrawCircle[fill,color=white](M2,A)
 %   \tkzDrawCircle[fill,orthogonal through=C and p,color=white](O,Z)
       \tkzDefCircle[orthogonal through=C and p](O,Z) \tkzGetPoint{M3}
\tkzDrawCircle[fill,color=white](M3,C)

    % Draw big circle
    \tkzDrawCircle(O,Z)

    % Draw lines
  %  \tkzDrawCircle[orthogonal through=A and p,color=red](O,Z) %{$\delta$}
    \tkzDefCircle[orthogonal through=A and p](O,Z) \tkzGetPoint{L1}
\tkzDrawCircle[color=red](L1,A)
%    \tkzDrawCircle[orthogonal through=C and p,color=blue](O,Z) 
  \tkzDefCircle[orthogonal through=C and p](O,Z) \tkzGetPoint{L2}
\tkzDrawCircle[color=blue](L2,C)
 %   \tkzDrawCircle[orthogonal through=A and E,color=blue](O,Z) %{$a$}
   \tkzDefCircle[orthogonal through=A and E](O,Z) \tkzGetPoint{L3}
\tkzDrawCircle[color=blue](L3,A)
 %   \tkzDrawCircle[orthogonal through=E and D,color=blue](O,Z) %{$z$}
   \tkzDefCircle[orthogonal through=E and D](O,Z) \tkzGetPoint{L4}
\tkzDrawCircle[color=blue](L4,E)
 %   \tkzDrawCircle[orthogonal through=D and C,color=blue](O,Z)  %{$c$}
   \tkzDefCircle[orthogonal through=D and C](O,Z) \tkzGetPoint{L5}
\tkzDrawCircle[color=blue](L5,D)

    \node at (-0.25,0.25) {$\delta$} ;
    \node at (-0.58,-0.3) {$L_i'$} ;
    \node at (0.3,-0.5) {$c_i$} ;

    % Draw points and label them
    \tkzDrawPoints[color=black,fill=red,size=5](p)
  %  \tkzLabelPoints(p)
\end{tikzpicture}
%\fi
\caption{Hyperbolic pentagon with all but one right angle which may be greater than $\frac{\pi}{2}$.} \label{pentagon for crowned surfaces}
\end{figure}
Any hyperbolic pentagon satisfies the following formula from \cite[Example 2.2.7, (iv), (v)]{BusGeo}  \begin{equation} \hspace{-0.1cm}
\sinh(\delta)=\frac{\cosh(c_i)+\cos(\alpha_i)\cosh(L_i')}{\sin(\alpha_i)\sinh(L_i')}. 
\end{equation}
By symmetry of the pair of pants, there exists elements of  $\cM_{g,n}^{\mathrm{hyp}}(L_1,\ldots,L_n)$ with $p_j$ positioned so that $\beta$ is less than both $\alpha_1$ and $\alpha_2$. On such hyperbolic surfaces, at least one of the $\alpha_1$ or $\alpha_2$ is greater than $\frac{\pi}{2}$, since $\theta>\pi$. Consider the subset $X\subset\cM_{g,n}^{\mathrm{hyp}}(L_1,\ldots,L_n)$  where $\alpha_2>\frac{\pi}{2}$, so that $\cos(\alpha_2)<0$. Suppose that $L_j$ is sufficiently large, so that $\cos(\alpha_2)\cosh(L_2')<-1$. Keeping $\alpha_1, \alpha_2, \beta$ and $L_2'$ constant, we may consider a path in $X$ where the length of $\gamma_2$ varies, so that $\cosh(c_2)$ approaches $|\cos(\alpha_2)\cosh(L_2')|>1$. This path will contain points in $X$ corresponding to hyperbolic surfaces with arbitrarily small separation length $\delta$. In this situation, the cone point $p_j$ and boundary geodesic $\beta_k$ will merge, rather than producing a cusp elsewhere in the surface, which would occur if $c_2\rightarrow 0$ were possible. \\ \indent
In the case $(g,n)=(0,3)$, surfaces are hyperbolic pairs of pants with two boundary components $\beta_1$ and $\beta_2$ and a single cone point $\beta_3=p$. These can be decomposed into two pentagons by cutting along seams and unique geodesics from $p$ meeting the two boundary geodesics perpendicularly. Each pentagon has all right angles except one, of angle $\alpha$, which is at $p$. We depict such a pentagon in Figure~\ref{pentagon for crowned surfacesz}. 
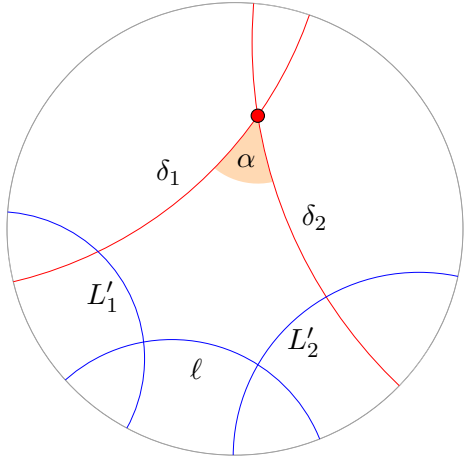
\begin{figure}[!htb] \centering
%\ifdefined\tkz 
\begin{tikzpicture}[scale=3]
    \tkzDefPoint(0,0){O}
    \tkzDefPoint(1,0){Z}

    % Define points
    \tkzDefPoint(-0.6,-0.1){A}
    \tkzDefPoint(0.4,-0.3){C}
    \tkzDefPoint(0.1,0.5){p}
    \tkzDefPoint(0.1,-0.6){D}
    \tkzDefPoint(-0.4,-0.5){E}

    % Draw big circle
    \tkzDrawCircle[fill=white](O,Z)

    % I have no idea what this does,
    % but when I remove it the rest of the image (the big circles)
    % get visible. So it seems to crop the image to the size of the circle
    \tkzClipCircle(O,Z)

        % Draw angle
 %   \tkzMarkAngles[fill= orange,size=0.3cm,opacity=.3](A,p,C)
 \pic[fill=orange,angle radius=0.9cm,,opacity=.3] {angle = A--p--C};
  %  \tkzLabelAngle[pos=0.2](A,p,C){$\alpha_i$};
  \node at (0.05,0.3) {$\alpha$} ;

        % "Clip" angles
 %   \tkzDrawCircle[fill,orthogonal through=A and C,color=white](O,Z)
          \tkzDefCircle[orthogonal through=A and C](O,Z) \tkzGetPoint{M1}
\tkzDrawCircle[fill,color=white](M1,q)
 %   \tkzDrawCircle[fill,orthogonal through=A and p,color=white](O,Z)
          \tkzDefCircle[orthogonal through=A and p](O,Z) \tkzGetPoint{M2}
\tkzDrawCircle[fill,color=white](M2,A)
 %   \tkzDrawCircle[fill,orthogonal through=C and p,color=white](O,Z)
       \tkzDefCircle[orthogonal through=C and p](O,Z) \tkzGetPoint{M3}
\tkzDrawCircle[fill,color=white](M3,C)

    % Draw big circle
    \tkzDrawCircle(O,Z)

        % Draw lines
  %  \tkzDrawCircle[orthogonal through=A and p,color=red](O,Z) %{$\delta$}
    \tkzDefCircle[orthogonal through=A and p](O,Z) \tkzGetPoint{L1}
\tkzDrawCircle[color=red](L1,A)
%    \tkzDrawCircle[orthogonal through=C and p,color=blue](O,Z) 
  \tkzDefCircle[orthogonal through=C and p](O,Z) \tkzGetPoint{L2}
\tkzDrawCircle[color=red](L2,C)
 %   \tkzDrawCircle[orthogonal through=A and E,color=blue](O,Z) %{$a$}
   \tkzDefCircle[orthogonal through=A and E](O,Z) \tkzGetPoint{L3}
\tkzDrawCircle[color=blue](L3,A)
 %   \tkzDrawCircle[orthogonal through=E and D,color=blue](O,Z) %{$z$}
   \tkzDefCircle[orthogonal through=E and D](O,Z) \tkzGetPoint{L4}
\tkzDrawCircle[color=blue](L4,E)
 %   \tkzDrawCircle[orthogonal through=D and C,color=blue](O,Z)  %{$c$}
   \tkzDefCircle[orthogonal through=D and C](O,Z) \tkzGetPoint{L5}
\tkzDrawCircle[color=blue](L5,D)

    \node at (-0.29,0.25) {$\delta_1$} ;
    \node at (0.35,0.05) {$\delta_2$} ;
    \node at (-0.17,-0.62) {$\ell$} ;
    \node at (-0.58,-0.3) {$L_1'$} ;
    \node at (0.3,-0.5) {$L_2'$} ;

    % Draw points and label them
    \tkzDrawPoints[color=black,fill=red,size=5](p)
  %  \tkzLabelPoints(p)
\end{tikzpicture}
%\fi
\caption{Hyperbolic pentagon constructed from a $(0,3)$ surface, with all but one right angle.} \label{pentagon for crowned surfacesz}
\end{figure}
Here $\ell$ denotes the length between boundary geodesics, which is variable within $\cM_{0,3}^{\mathrm{hyp}}(L_1,L_2,ip)$, and $L_1'$ and $L_2'$ are lengths of sections of $\beta_1$ and $\beta_2$ respectively. \\ \indent
Since $\theta>\pi$, one of the two possible pentagons must have $\alpha>\frac{\pi}{2}$. From here the proof is as in the general case, where we can find a path in $\cM_{0,3}^{\mathrm{hyp}}(L_1,L_2,ip)$ containing points which correspond to hyperbolic surfaces with arbitrarily small values of $\delta_2$. We note that in this case such a path is produced by fixing $L_1'$ and varying $\ell$, which will cause $L_2'$ to vary without changing $L_2$.   
\end{proof}

A consequence of Lemma~\ref{angle length merge} is the failure, for general $\mathbf{L}$, of the existence of a compactification of $\modm_{g,n}^{\mathrm{hyp}}(\mathbf{L})$ via the introduction of nodal hyperbolic surfaces.  We show this failure explicitly here.  Under the hypotheses of Lemma~\ref{angle length merge}, there exists a path in the moduli space  $\cM_{g,n}^{\mathrm{hyp}}(L_1,\ldots,L_n)$ with hyperbolic surfaces of decreasing distance between the given cone point and boundary component. In the limit of this path is a hyperbolic surface with piecewise geodesic boundary, where the cone point and boundary geodesic have merged. A diagram of this process is depicted in Figure~\ref{Formation of crowned surface component}. \begin{figure}[ht]  
	\centerline{\includegraphics[height=9cm]{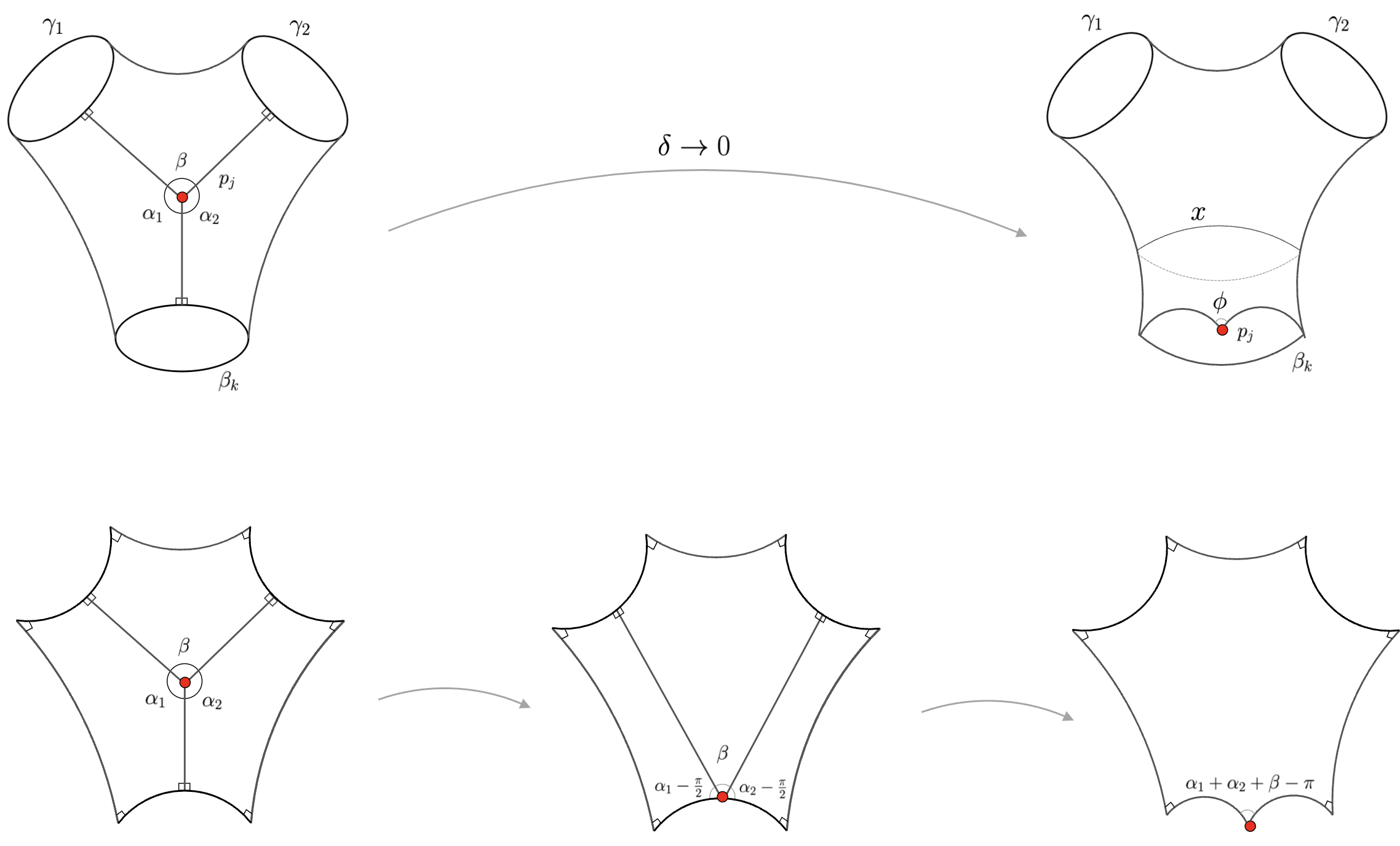}}
	\caption{Formation of piecewise geodesic boundary component as $\delta\rightarrow 0$.}
	\label{Formation of crowned surface component} 
\end{figure} 
Here the angle is $\phi=\theta_j-\pi$ at $p_j\in\beta_k$, where the piecewise geodesic boundary $\beta_k$ is not smooth. There exists a shorter simple closed geodesic path $\beta'$ which is homotopic to $\beta_k$. This violates the hyperbolic boundary holonomy, in the sense that now $2\cosh(L_k/2)\neq\left|\mathrm{tr}(A_k)\right|$ where $A_k\in PSL(2,\br)$ represents the conjugacy class defined by the holonomy of the metric around $\beta_k$. Instead, $\left|\mathrm{tr}(A_k)\right|=2\cosh(x/2)$ where $x=\ell(\beta')$ is the length of $\beta'$. As a function of $\theta_j$ and $L_k$ the length $x$ is uniquely determined by the equation
\begin{equation} \label{crownlength1}
\sin ^{2}\left(\phi\right) \cosh(L_k) \tanh ^{2}\left(\frac{x}{2}\right)=\cosh (x) \tanh ^{2}\left(\frac{L_k}{2}\right)-\cos ^{2}\left(\phi\right).
\end{equation}
%Notice that as $\theta\rightarrow 2\pi$ (so 
As $\phi\rightarrow\pi$, the limits of each side of equation \eqref{crownlength1} are given by $\cosh(L_k) \tanh ^{2}\left(\frac{x}{2}\right)$ respectively $  \cosh (x) \tanh ^{2}(\frac{L_k}{2})$, implying $x\smallsim  L_k$ as we would expect. 
These surfaces with piecewise geodesic boundary exist in the compactification of the moduli space when there are cone points with cone angles greater than $\pi$ along with boundary geodesics. This implies that the natural compactification of the moduli space of hyperbolic surfaces has real codimension one boundary components.  These are formed by  hyperbolic surfaces with piecewise geodesic boundary containing a single  cone angle.  The rotation of the boundary structure with respect to the rest of the surface gives the extra real dimension.  A piecewise geodesic boundary with more than one cone angle gives a lower dimensional stratum.

\section{Generalised Weil-Petersson symplectic form}   \label{WPsymp}

In this section we study the behaviour of the generalised Weil-Petersson forms $\wpl$ inside the compactification of the moduli space.

Given any family of hyperbolic surfaces $X\to S$ the relative canonical line bundle $K_{X/S}\to X$ comes equipped with a natural Hermitian metric defined via the unique hyperbolic metric on each fibre.  The holomorphic structure and Hermitian metric uniquely determines a connection on $K_{X/S}$ with curvature that represents a Chern form.  Wolpert used this idea in \cite{WolChe} to prove an explicit pushforward formula for the Weil-Petersson symplectic form in terms of the curvature, which appears as the $n=0$ case of \eqref{intfibre}.  %The line bundle $K_{X/S}$ extends to a compactification $\overline{X}/\overline{S}$ by a family of stable curves (perhaps after stable reduction) and its first Chern class is represented by an extension of the Chern form.  This leads to Wolpert's formula $[\omega_{\mathrm{WP}}]=2\pi^2\kappa_1$ for the cohomology class of the extension of the Weil-Petersson symplectic form to the compactification.

Schumacher and Trapani generalised the result of Wolpert by relating $\wpl$ with the curvature of a Hermitian metric on the relative log-canonical line bundle over the universal curve \cite{STrWei}.  The symplectic form $\wpl$ is a pushforward of the square of the curvature of the Hermitian metric, or Chern form, constructed as follows.  Given any family of hyperbolic surfaces $X\to S$ with $n$ sections having image divisor $D=\displaystyle\sqcup_{j=1}^n D_j\subset X$, fix a choice of weights $\mathbf{a}=(a_1,...,a_n)\in(0,1]^n$ and define $\mathbf{a}\cdot D=\sum a_jD_j$.  The relative log-canonical line bundle $K_{X/S}(D)$ is equipped with a Hermitian metric, using the conical hyperbolic metric $h(\mathbf{L})|dz|^2$ on fibres determined by $\mathbf{a}\cdot D$ where $\mathbf{a}=a(\mathbf{L})$.  The curvature of the Chern connection of the Hermitian metric, which represents the first Chern form of $K_{X/S}(D)$ adjusted by the weights $\mathbf{a}$, is given by the real $(1,1)$ form defined over $X\setminus D$ by
\begin{equation}   \label{11form}
\Omega_{X/S}(\mathbf{a}):=i\partial\bar{\partial}\log(h({\mathbf{L}})).
\end{equation}
Note that \eqref{11form} is well-defined since the ambiguity $h(\mathbf{L})\mapsto h(\mathbf{L})|z'(w)|^2$ due to a change of local coordinate $z=z(w)$ satisfies $\partial\bar{\partial}\log|z'(w)|^2=0$.
%the operators $\partial=\frac{\partial}{\partial z}$ and $\bar{\partial}=\frac{\partial}{\partial\bar{z}}$ use the same local coordinate $z$ on the fibre.  
Schumacher and Trapani proved the following:
\begin{theorem}[\cite{STrWei}]  
\begin{equation}  \label{intfibre}
\frac12\int_{X/S}\Omega_{X/S}(\mathbf{a})^2=\wpl
\end{equation}
\end{theorem}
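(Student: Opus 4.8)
The plan is to prove the identity by a local computation on the total space $X$ followed by integration over the fibres, generalising Wolpert's argument for the cuspidal case \cite{WolChe}. Work in local coordinates $(z,s_1,\ldots,s_d)$, $d=3g-3+n$, where $z$ is a fibre coordinate and $s=(s_1,\ldots,s_d)$ parametrise the base $S$, and let $g_{\mathbf{a}}=g_{\mathbf{a}}(z,s)$ be the coefficient of the conical hyperbolic metric $g_{\mathbf{a}}|dz|^2$ on $X_s\setminus D$. The first step is to use the defining Kähler--Einstein condition, namely constant curvature $-1$, which on $X_s\setminus D$ reads $\partial_z\partial_{\bar z}\log g_{\mathbf{a}}=\tfrac12 g_{\mathbf{a}}$. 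Since the hyperbolic area form is $\tfrac{i}{2}g_{\mathbf{a}}\,dz\wedge d\bar z$, this identifies the purely fibre part $\Omega_{z\bar z}$ of $\Omega_{X/S}(\mathbf{a})=\tfrac{i}{2}\partial\bar\partial\log g_{\mathbf{a}}$ with a fixed multiple of the fibrewise area form, pinning down the normalisation of $\Omega_{X/S}(\mathbf{a})$ along each fibre.

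Next I would decompose the full $(1,1)$-form on $X$ as $\Omega_{X/S}(\mathbf{a})=\Omega_{z\bar z}+\Omega_{\mathrm{mix}}+\Omega_{s\bar s}$, separating the $dz\wedge d\bar z$ piece, the mixed $dz\wedge d\bar s$ and $ds\wedge d\bar z$ pieces, and the base $ds\wedge d\bar s$ piece. Forming $\Omega_{X/S}(\mathbf{a})^2$ and retaining only terms with exactly one fibre factor $dz\wedge d\bar z$ (all others integrate to zero over the one-dimensional fibre), the fibre integral $\tfrac12\int_{X/S}\Omega_{X/S}(\mathbf{a})^2$ reduces to a $(1,1)$-form on $S$ built from two contributions: the base piece $\Omega_{s\bar s}$ weighted by the fibre area, and the product of the two mixed pieces. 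The key computation is that the mixed components are controlled by $\partial_s\log g_{\mathbf{a}}$; differentiating the curvature equation in $s_k$ shows that its fibrewise $\bar\partial$ recovers the harmonic Beltrami differentials $\mu_k$ representing the Kodaira--Spencer classes $\partial/\partial s_k$. The surviving cross term therefore reproduces the $L^2$-pairing $\int_{X_s}\mu_j\overline{\mu_k}\,dA$ of Beltrami differentials against hyperbolic area, which is exactly the Weil--Petersson pairing underlying $\omega^{\text{WP}}(\mathbf{a})$. Comparing with the Schumacher--Trapani definition \cite{STrWei}, which is itself the $L^2$-metric for this same fibrewise hyperbolic metric, the normalisations combine to give the factor $\tfrac12$ and the asserted identity.

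The hardest part is controlling the behaviour at the cone divisor $D$, where $\Omega_{X/S}(\mathbf{a})$ is only defined on $X\setminus D$. Near a section $D_j$ one has the local model $g_{\mathbf{a}}=f(z)|z|^{2(a_j-1)}$ with $f$ smooth and positive, so $\log g_{\mathbf{a}}$ carries a singular term $2(a_j-1)\log|z|$; the twist by $\mathbf{a}\cdot D$ in $K_{X/S}(D)$ is chosen precisely so that this is the singularity absorbed by the Hermitian metric on the twisted bundle. I would excise small fibre disks $\{|z|<\varepsilon\}$ about each $D_j$, perform the fibre integration and the integration by parts moving $\partial_s$ past $\partial_z\partial_{\bar z}$ on the complement, and then show the boundary integrals over $\{|z|=\varepsilon\}$ have a limit as $\varepsilon\to0$. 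Because $a_j\in(0,1]$ the exponent $2(a_j-1)>-2$ keeps the fibre area integrable, and the weight matching ensures the residual boundary terms either vanish or are locally constant in $s$, so they contribute nothing to the resulting $(1,1)$-form on $S$. Establishing this convergence and the absence of spurious residue contributions at $D$---rather than the formal bulk computation---is the main obstacle, and it is exactly where the conical case departs from Wolpert's smooth cuspidal computation \cite{WolChe} and requires McOwen's local structure of conical metrics \cite{McOPoi}.
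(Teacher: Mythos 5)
The first thing to note is that the paper does not prove this statement: it is imported directly from Schumacher--Trapani \cite{STrWei} (the case $\mathbf{a}=(1,\ldots,1)$ being Wolpert's pushforward formula \cite{WolChe}), so there is no internal argument to compare yours against. Your outline --- the constant-curvature equation $\partial_z\partial_{\bar z}\log g_{\mathbf{a}}=\tfrac12 g_{\mathbf{a}}$ to pin down the fibre component, fibre integration of $\Omega_{X/S}(\mathbf{a})^2$ keeping only the $\Omega_{z\bar z}\wedge\Omega_{s\bar s}$ and mixed-times-mixed terms, identification of the mixed components with harmonic Beltrami differentials via horizontal lifts, and excision near $D$ --- is exactly the structure of the Wolpert/Schumacher--Trapani argument, so the strategy is the right one.

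There are two problems, one concrete and one of substance. Concretely, your local model contradicts the paper's conventions: with $\theta_j=2\pi(1-a_j)$ the conical metric near $D_j$ is $g_{\mathbf{a}}=f(z)\,|z|^{2(\theta_j/2\pi-1)}=f(z)\,|z|^{-2a_j}$ (see the proof of Lemma~\ref{angles merging}), not $f(z)\,|z|^{2(a_j-1)}$; you have swapped $a_j$ with $\theta_j/2\pi$. The singular term of $\log g_{\mathbf{a}}$ is therefore $-a_j\log|z|^2$, and this is precisely what produces the twist in $[\Omega_{X/S}(\mathbf{a})]=[\Omega_{X/S}]+2\pi\,\mathbf{a}\cdot D$ used in the paper's Proof 1 of Corollary~\ref{th:WPcoh}; with your exponent the twist would come out as $2\pi(\mathbf{1}-\mathbf{a})\cdot D$, and at $a_j=1$ (a cusp, where in addition the metric has logarithmic corrections rather than a pure power) your model predicts a smooth metric, which is false. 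Of substance: the two steps that constitute the actual content of \cite{STrWei} are asserted rather than proved --- (i) that differentiating the Liouville equation in $s_k$ makes the horizontal-lift Beltrami differentials harmonic and that they represent the Kodaira--Spencer classes in the appropriate weighted spaces on a conical surface, and (ii) that the $\varepsilon\to 0$ boundary integrals vanish, which requires exactly the regularity statement (\cite[Theorem 2.3]{STrWei}, quoted later in the paper) that $\log g_{\mathbf{a}}+a_j\log|z|^2$ is smooth across $D_j$ and varies smoothly with the base parameters. You correctly flag (ii) as the hard point, but ``the boundary terms either vanish or are locally constant in $s$'' is the conclusion one must establish, not an observation one can make; as written, the proposal is an accurate road map to the cited proof rather than a proof.
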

\noindent When $n=0$ this is Wolpert's pushforward formula \cite{WolChe} which was generalised to $n>0$ and $\mathbf{a}=(1,1,...,1)$ by Arbarello and Cornalba in \cite{ACoCom}.  Note that our conventions in \eqref{11form} and \eqref{intfibre} differ by constants from those in \cite{STrWei} in order to produce the usual convention $[\omega_{\mathrm{WP}}]=2\pi^2\kappa_1$ when $\mathbf{a}=(1,1,...,1)$.

\subsection{Cohomology class on the compactification}   \label{sec:cohclass}

Wolpert proved in \cite{WolWei} that the Weil-Petersson form $\omega_{\mathrm{WP}}$ extends from $\modm_{g,n}$ to a closed current on $\overline{\modm}_{g,n}$.  In \cite{WolWei}, the result is stated for $n=0$ but generalises to $n>0$ due to the local nature of the proof.  The following result generalises this to allow cone angles.

\begin{proposition}\label{comsumpi}
Given $\mathbf{L}\in i\hspace{.5mm}[0,2\pi)^n$ satisfying \eqref{areaineq}, %with $\modm_{g,n}^{\mathrm{hyp}}(\mathbf{L})$ non-empty, 
the Weil-Petersson form $\wpl$ extends as a closed current to the Hassett compactification $\overline{\modm}_{g,\mathbf{a}}\cong\overline{\modm}_{g,n}^{\mathrm{hyp}}(\mathbf{L})$ 
for $\mathbf{a}=a(\mathbf{L})$.
\end{proposition}
\begin{proof}
The $n=0$ case was proven by Wolpert in \cite[Theorem 2.3]{WolWei}.  The extension requires knowledge of the asymptotic behaviour of components of the Petersson pairing \eqref{WPmetric} near cusps on a hyperbolic surface.  Wolpert achieves this by using estimates of Masur \cite{MasExt}, who studied the asymptotic behaviour of the Weil-Petersson metric near the boundary divisor of $\overline{\modm}_{g}$, to show convergence of the pairing between $\omega_{\mathrm{WP}}$ and locally compact differential forms.   Since Masur's results use the local structure of nodal curves near nodes, Wolpert's proof generalises to the $n>0$ case.   By Theorem~\ref{comphomeo} together with the behaviour of the universal curve proven in Section~\ref{sec:compact}, we see that the same local structure of nodal curves near nodes occurs for conical hyperbolic surfaces, i.e.\  in the boundary limit of the compactification $\overline{\modm}_{g,n}^{\mathrm{hyp}}(\mathbf{L})$ of $\modm_{g,n}$, nodes form and the hyperbolic metric is complete there, hence Wolpert's proof generalises to the case of cone angles.  In particular, the local behaviour of $\wpl$ near the boundary of the compactification is the same as the local behaviour of $\omega_{\mathrm{WP}}$ in the complete case.  Hence the pairing between $\wpl$ and locally compact differential forms converges as for the complete case, and Wolpert's argument generalises to show that the 2-form extends as a well-defined closed current.
\end{proof}

On the Deligne-Mumford compactification, define the forgetful map $\pi:\overline{\modm}_{g,n+1}\to\overline{\modm}_{g,n}$ that forgets the last point, and define  $K_{\overline{\modm}_{g,n+1}/\overline{\modm}_{g,n}}$ to be the relative canonical sheaf of the map $\pi$.  There are natural sections $p_j:\overline{\modm}_{g,n}\to\overline{\modm}_{g,n+1}$ of the map $\pi$ for each $j=1,...,n$.   Define $L_j= p_j^*K_{\overline{\modm}_{g,n+1}/\overline{\modm}_{g,n}}$ to be the line bundle $L_j\to\overline{\modm}_{g,n}$ with fibre above $[(C,p_1,\ldots,p_n)]$ given by $T_{p_j}^*C$ and $\psi_j=c_1(L_j)\in H^{2}(\overline{\modm}_{g,n},\bq) $ to be its first Chern class.   Define the classes $\kappa_m=\pi_*\psi^{m+1}_{n+1}\in H^{2m}( \overline{\cal M}_{g,n},\bq)$ where $\pi$ is the forgetful map $\overline{\modm}_{g,n+1}\stackrel{\pi}{\longrightarrow}\overline{\modm}_{g,n}$.  They were defined by Mumford \cite{MumTow} and in this form by Arbarello-Cornalba \cite{ACoCom}, Miller \cite{MilHom}, Morita \cite{MorCha} and Witten \cite{WitTwo}.  We will refer to $\psi_j$ and $\kappa_m$ as $\psi$ classes, respectively $\kappa$ classes.
 Wolpert proved that the cohomology class of the extension of the Weil-Petersson form is $[\omega_{\mathrm{WP}}]=2\pi^2\kappa_1$ \cite{WolWei}.  %Mirzakhani used this formula together with a symplectic reduction argument applied to the larger symplectic

The following special case of Theorem~\ref{extension} is required in the proof of Theorem~\ref{extension}.  It is equivalent to the calculation of the volume $\mathrm{Vol}(\modm_{0,4}^{\mathrm{hyp}}(\mathbf{L}))$ since the cohomology class is of top degree on $\overline{\modm}_{0,4}$.

\begin{proposition}   \label{04calculation}
If $\mathbf{L}\in i[0,2\pi)^4$ and $|L_j|+|L_k|<2\pi,\ \forall j\neq k$ then 
\[
[\wpl]=2\pi^2\kappa_1-\tfrac12\sum_{j=1}^4\theta_j^2\psi_j.
\]
%\[\mathrm{Vol}(\modm_{0,4}^{\mathrm{hyp}}(\mathbf{L}))=V_{0,4}(\mathbf{L})\]
%i.e. the generalised Weil-Petersson volume is given by Mirzakhani's polynomial.
\end{proposition}
\begin{proof}
The 2-form $\Omega_{X/S}(\mathbf{a})$, for $X=\modm_{0,5}$ and $S=\modm_{0,4}$ defined on the universal curve $\modm_{0,5}$ over $\modm_{0,4}$ is given explicitly in \eqref{11form}.  It extends to a closed current on $\overline{\modm}_{0,5}$ which represents a cohomology class $\eta\in H^2(\overline{\modm}_{0,5},\br)$ determined via the following direct calculation.  The moduli space $\overline{\modm}_{0,5}$ can be obtained as the blowup of $\bp^1\times\bp^1$ at three points as in the diagram which shows $\overline{\modm}_{0,5}\to\overline{\modm}_{0,4}$, where the $D_j$ are the marked points.
\begin{center}
 \begin{tikzpicture}[x=0.5pt,y=0.5pt,yscale=-1.07,xscale=1.06]
%uncomment if require: \path (0,300); %set diagram left start at 0, and has height of 300

%Straight Lines [id:da06314315370837242] 
\draw    (230,230) -- (70,230) ;
%Straight Lines [id:da7701082857225201] 
\draw    (230,120) -- (70,120) ;
%Straight Lines [id:da8285069574637265] 
\draw    (230,170) -- (70,170) ;
%Straight Lines [id:da6989282620057866] 
\draw    (230,70) -- (70,70) ;
%Straight Lines [id:da9574526018595557] 
\draw    (90,180) -- (70,200) ;
%Straight Lines [id:da7929869366699884] 
\draw    (140,130) -- (110,160) ;
%Straight Lines [id:da9139453149679289] 
\draw    (190,80) -- (160,110) ;
%Straight Lines [id:da48739408226197756] 
\draw    (230,40) -- (210,60) ;
%Shape: Arc [id:dp30038033562596933] 
\draw  [draw opacity=0][dash pattern={on 4.5pt off 4.5pt}] (110.74,160.07) .. controls (115.83,165.58) and (115.28,174.4) .. (109.48,179.81) .. controls (103.67,185.22) and (94.8,185.15) .. (89.66,179.64) .. controls (89.65,179.63) and (89.64,179.62) .. (89.63,179.61) -- (100.18,169.83) -- cycle ; \draw  [dash pattern={on 4.5pt off 4.5pt}] (110.74,160.07) .. controls (115.83,165.58) and (115.28,174.4) .. (109.48,179.81) .. controls (103.67,185.22) and (94.8,185.15) .. (89.66,179.64) .. controls (89.65,179.63) and (89.64,179.62) .. (89.63,179.61) ;  
%Shape: Arc [id:dp6097547473813771] 
\draw  [draw opacity=0][dash pattern={on 4.5pt off 4.5pt}] (160.74,110.07) .. controls (165.83,115.58) and (165.2,124.48) .. (159.3,129.98) .. controls (153.39,135.49) and (144.44,135.49) .. (139.3,129.98) .. controls (139.22,129.89) and (139.14,129.79) .. (139.05,129.7) -- (150,120) -- cycle ; \draw  [dash pattern={on 4.5pt off 4.5pt}] (160.74,110.07) .. controls (165.83,115.58) and (165.2,124.48) .. (159.3,129.98) .. controls (153.39,135.49) and (144.44,135.49) .. (139.3,129.98) .. controls (139.22,129.89) and (139.14,129.79) .. (139.05,129.7) ;  
%Shape: Arc [id:dp5386067498991003] 
\draw  [draw opacity=0][dash pattern={on 4.5pt off 4.5pt}] (210.74,60.07) .. controls (215.83,65.58) and (215.2,74.48) .. (209.3,79.98) .. controls (203.39,85.49) and (194.44,85.49) .. (189.3,79.98) .. controls (189.22,79.89) and (189.14,79.79) .. (189.05,79.7) -- (200,70) -- cycle ; \draw  [dash pattern={on 4.5pt off 4.5pt}] (210.74,60.07) .. controls (215.83,65.58) and (215.2,74.48) .. (209.3,79.98) .. controls (203.39,85.49) and (194.44,85.49) .. (189.3,79.98) .. controls (189.22,79.89) and (189.14,79.79) .. (189.05,79.7) ;  
%Shape: Arc [id:dp9984138944077643] 
\draw  [draw opacity=0] (81.06,152.81) .. controls (87.83,154.57) and (94.9,160.44) .. (99.56,169) .. controls (104.05,177.24) and (105.24,186.03) .. (103.35,192.6) -- (83.79,177.59) -- cycle ; \draw   (81.06,152.81) .. controls (87.83,154.57) and (94.9,160.44) .. (99.56,169) .. controls (104.05,177.24) and (105.24,186.03) .. (103.35,192.6) ;  
%Shape: Arc [id:dp26462675385672196] 
\draw  [draw opacity=0] (131.06,102.81) .. controls (137.83,104.57) and (144.9,110.44) .. (149.56,119) .. controls (154.05,127.24) and (155.24,136.03) .. (153.35,142.6) -- (133.79,127.59) -- cycle ; \draw   (131.06,102.81) .. controls (137.83,104.57) and (144.9,110.44) .. (149.56,119) .. controls (154.05,127.24) and (155.24,136.03) .. (153.35,142.6) ;  
%Shape: Arc [id:dp6166702675502429] 
\draw  [draw opacity=0] (182.06,53.81) .. controls (188.83,55.57) and (195.9,61.44) .. (200.56,70) .. controls (205.05,78.24) and (206.24,87.03) .. (204.35,93.6) -- (184.79,78.59) -- cycle ; \draw   (182.06,53.81) .. controls (188.83,55.57) and (195.9,61.44) .. (200.56,70) .. controls (205.05,78.24) and (206.24,87.03) .. (204.35,93.6) ;  
%Shape: Arc [id:dp7408003812264461] 
\draw  [draw opacity=0] (205.89,82.4) .. controls (200.32,85.31) and (193.52,84.5) .. (189.3,79.98) .. controls (189.3,79.97) and (189.29,79.96) .. (189.28,79.95) -- (200,70) -- cycle ; \draw   (205.89,82.4) .. controls (200.32,85.31) and (193.52,84.5) .. (189.3,79.98) .. controls (189.3,79.97) and (189.29,79.96) .. (189.28,79.95) ;  
%Shape: Arc [id:dp129688762211428] 
\draw  [draw opacity=0] (155.89,132.4) .. controls (150.32,135.31) and (143.52,134.5) .. (139.3,129.98) .. controls (139.3,129.97) and (139.29,129.96) .. (139.28,129.95) -- (150,120) -- cycle ; \draw   (155.89,132.4) .. controls (150.32,135.31) and (143.52,134.5) .. (139.3,129.98) .. controls (139.3,129.97) and (139.29,129.96) .. (139.28,129.95) ;  
%Shape: Arc [id:dp24841293560928324] 
\draw  [draw opacity=0] (106.07,182.23) .. controls (100.5,185.14) and (93.7,184.33) .. (89.48,179.81) .. controls (89.48,179.8) and (89.47,179.79) .. (89.46,179.78) -- (100.18,169.83) -- cycle ; \draw   (106.07,182.23) .. controls (100.5,185.14) and (93.7,184.33) .. (89.48,179.81) .. controls (89.48,179.8) and (89.47,179.79) .. (89.46,179.78) ;  
%Shape: Circle [id:dp5688947926598529] 
\draw  [fill={rgb, 255:red, 0; green, 0; blue, 0 }  ,fill opacity=1 ] (98,230) .. controls (98,228.9) and (98.9,228) .. (100,228) .. controls (101.1,228) and (102,228.9) .. (102,230) .. controls (102,231.1) and (101.1,232) .. (100,232) .. controls (98.9,232) and (98,231.1) .. (98,230) -- cycle ;
%Shape: Circle [id:dp5645301624576669] 
\draw  [fill={rgb, 255:red, 0; green, 0; blue, 0 }  ,fill opacity=1 ] (148,230) .. controls (148,228.9) and (148.9,228) .. (150,228) .. controls (151.1,228) and (152,228.9) .. (152,230) .. controls (152,231.1) and (151.1,232) .. (150,232) .. controls (148.9,232) and (148,231.1) .. (148,230) -- cycle ;
%Shape: Circle [id:dp5141870949502778] 
\draw  [fill={rgb, 255:red, 0; green, 0; blue, 0 }  ,fill opacity=1 ] (198,230) .. controls (198,228.9) and (198.9,228) .. (200,228) .. controls (201.1,228) and (202,228.9) .. (202,230) .. controls (202,231.1) and (201.1,232) .. (200,232) .. controls (198.9,232) and (198,231.1) .. (198,230) -- cycle ;
%Shape: Circle [id:dp5569568381574688] 
\draw  [fill={rgb, 255:red, 0; green, 0; blue, 0 }  ,fill opacity=1 ] (98,170) .. controls (98,168.9) and (98.9,168) .. (100,168) .. controls (101.1,168) and (102,168.9) .. (102,170) .. controls (102,171.1) and (101.1,172) .. (100,172) .. controls (98.9,172) and (98,171.1) .. (98,170) -- cycle ;
%Shape: Circle [id:dp6975853275556352] 
\draw  [fill={rgb, 255:red, 0; green, 0; blue, 0 }  ,fill opacity=1 ] (148,120) .. controls (148,118.9) and (148.9,118) .. (150,118) .. controls (151.1,118) and (152,118.9) .. (152,120) .. controls (152,121.1) and (151.1,122) .. (150,122) .. controls (148.9,122) and (148,121.1) .. (148,120) -- cycle ;
%Shape: Circle [id:dp7966915564344788] 
\draw  [fill={rgb, 255:red, 0; green, 0; blue, 0 }  ,fill opacity=1 ] (198,70) .. controls (198,68.9) and (198.9,68) .. (200,68) .. controls (201.1,68) and (202,68.9) .. (202,70) .. controls (202,71.1) and (201.1,72) .. (200,72) .. controls (198.9,72) and (198,71.1) .. (198,70) -- cycle ;
%Shape: Circle [id:dp09531004966366163] 
\draw  [fill={rgb, 255:red, 0; green, 0; blue, 0 }  ,fill opacity=1 ] (102,183) .. controls (102,181.9) and (102.9,181) .. (104,181) .. controls (105.1,181) and (106,181.9) .. (106,183) .. controls (106,184.1) and (105.1,185) .. (104,185) .. controls (102.9,185) and (102,184.1) .. (102,183) -- cycle ;
%Shape: Circle [id:dp32274148994223517] 
\draw  [fill={rgb, 255:red, 0; green, 0; blue, 0 }  ,fill opacity=1 ] (152,133) .. controls (152,131.9) and (152.9,131) .. (154,131) .. controls (155.1,131) and (156,131.9) .. (156,133) .. controls (156,134.1) and (155.1,135) .. (154,135) .. controls (152.9,135) and (152,134.1) .. (152,133) -- cycle ;
%Shape: Circle [id:dp9840655661596911] 
\draw  [fill={rgb, 255:red, 0; green, 0; blue, 0 }  ,fill opacity=1 ] (202.5,83) .. controls (202.5,81.9) and (203.4,81) .. (204.5,81) .. controls (205.6,81) and (206.5,81.9) .. (206.5,83) .. controls (206.5,84.1) and (205.6,85) .. (204.5,85) .. controls (203.4,85) and (202.5,84.1) .. (202.5,83) -- cycle ;

% Text Node
\draw (231,162.4) node [anchor=north west][inner sep=0.75pt]    {$D_{1}$};
% Text Node
\draw (231,62.4) node [anchor=north west][inner sep=0.75pt]    {$D_{3}$};
% Text Node
\draw (231,112.4) node [anchor=north west][inner sep=0.75pt]    {$D_{2}$};
% Text Node
\draw (221,22.4) node [anchor=north west][inner sep=0.75pt]    {$D_{4}$};
% Text Node
\draw (71,132.4) node [anchor=north west][inner sep=0.75pt]    {$E_{1}$};
% Text Node
\draw (171,32.4) node [anchor=north west][inner sep=0.75pt]    {$E_{3}$};
% Text Node
\draw (121,82.4) node [anchor=north west][inner sep=0.75pt]    {$E_{2}$};

\draw (150,190) node [anchor=north west][inner sep=0.75pt]    {$\downarrow$};
% Text Node
\draw (95,234.4) node [anchor=north west][inner sep=0.75pt]    {$0$};

% Text Node
\draw (146,235.4) node [anchor=north west][inner sep=0.75pt]    {$1$};

% Text Node
\draw (191,239.2) node [anchor=north west][inner sep=0.75pt]    {$\infty $};

\end{tikzpicture}
\end{center}
The second homology is $H_2(\overline{\modm}_{0,5},\bz)\cong\bz^5=\langle H,F,E_1,E_2,E_3\rangle$ where $F$ represents a generic fibre of the map $\overline{\modm}_{0,5}\to\overline{\modm}_{0,4}$, the exceptional divisors in the blowup are given by $E_j$, and $H$ represents the preimage of a generic curve $\bp^1\times\{\mathrm{pt}\}$ in the blowup of $\bp^1\times\bp^1$.  It satisfies $H\cdot F=1$, $H\cdot H=H\cdot E_j=0$ for $j=1,2,3$.  With respect to these generators,
$K_{X/S}=-2H+\sum E_j$, $D_j=H-E_j$ for $j=1,2,3$ and $D_4=F+H-\sum E_j$.

A cohomology class on $\overline{\modm}_{0,5}$ is determined by its evaluation on these homology classes.  This can be achieved by integration of $\Omega_{X/S}(\mathbf{a})$ over smooth curves %given by fibres of points in $\modm_{0,4}$ 
as follows.
The component of the curvature 2-form $\Omega_{X/S}(\mathbf{a})=i\partial\bar{\partial}\log(h({\mathbf{L}}))$ along a fibre of the map $\overline{\modm}_{0,5}\to\overline{\modm}_{0,4}$ defines the hyperbolic area form since the Gaussian curvature is $K=\frac{-2}{h({\mathbf{L}})}\frac{\partial^2}{\partial z\partial\bar{z}}\log(h({\mathbf{L}}))$.  By the Gauss-Bonnet formula, the hyperbolic area is constant over fibres,  and given by
\[ \langle\eta,F\rangle=\mathrm{area}(F)=4\pi-\sum_{j=1}^4\theta_j.
\]
To evaluate $\eta$ on the classes $E_j$, we consider limits of fibres.
As a point approaches 0, 1 or $\infty$ in $\overline{\modm}_{0,4}$, the hyperbolic surface becomes nodal and its area localises, again by the Gauss-Bonnet formula, in a well-behaved way.  The fibre over $0\in\overline{\modm}_{0,4}$  is given by $E_1\cup F_1$ where $F_1$ is the proper transform of $\{0\}\times\bp^1$.  It is represented by the nodal hyperbolic surface with cone angles $\theta_1$, $\theta_4$ and a cusp on the component corresponding to $E_1$, and cone angles $\theta_2$, $\theta_3$ and a cusp on the component corresponding to $F_1$.  The areas of the components are $2\pi-\theta_1-\theta_4$ and $2\pi-\theta_2-\theta_3$.  Hence 
\[\langle\eta,E_1\rangle=2\pi-\theta_1-\theta_4.\]  
Note that $E_1+F_1=F$ in homology which is reflected by the fact that the sum of the areas of $E_1$ and $F_1$ equals the area of a general fibre $F$.  Similar arguments yield $\langle\eta,E_j\rangle$ for $j=2,3$.  

To evaluate $\eta$ on $H$, represent $H$ by the proper transform of $\bp^1\times\{z_0\}$ where $z_0\notin\{0,1,\infty\}$ so that $H\cap D_j=\emptyset$ for $j=1,2,3$ and $H\cap D_4=\{z_0\}$.  The family of hyperbolic metrics $h({\mathbf{L}})$ restricts to $H$ to give $h(z_0,w)|dz|^2$ where $h\sim|w-z_0|^{-2a_4}$ as $w\to z_0$.  The integral of $i\partial\bar{\partial}\log(h({\mathbf{L}}))$ along $H$ depends only on the smoothness of $h$ for $w\neq z_0$ and its asymptotic behaviour near $z_0$, since given any other function, say $\xi(w)h(w)$ for a smooth non-vanishing function $\xi(w)$ on $H\cong S^2$, we have 
\[ \int_{S^2}i\partial\bar{\partial}\log(\xi(w)h(w))=\int_{S^2}i\partial\bar{\partial}\log(\xi(w))+\int_{S^2}i\partial\bar{\partial}\log(h(w))=\int_{S^2}i\partial\bar{\partial}\log(h(w))
\]
where the second equality uses the vanishing of the integral of the closed form $i\partial\bar{\partial}\log(\xi(w))$.  Hence, to evaluate $\eta$ on $H$, we can choose any function $h(w)$ smooth for $w\neq z_0$ and satisfying $h\sim|w-z_0|^{-2a_4}$ as $w\to z_0$. By a change of coordinates, choose $z_0=0$ and let 
\[h(w)=|w|^{-2a_4\chi(|w|)}\]
where $\chi(r)$ is a smooth step function from 0 to 1, specifically $\chi(r)=\left\{\begin{array}{ll}1&r\leq 1\\%\mathrm{smooth}&1\leq r\leq 2\\
0&r\geq2\end{array}\right.$ and it is smooth on $r\in[1,2]$.  Now $2i\partial\bar{\partial}=d*d$ so
\[ \int_{S^2}\hspace{-1mm}i\partial\bar{\partial}\log(h(w))=\tfrac12\int_{S^2}\hspace{-1mm}d*d\log(h(w))=\tfrac12\int_{|w|=1/2}\hspace{-8mm}*d\log(|w|^{-2a_4})=-a_4\int_{|w|=1/2}\hspace{-8mm}*d\log(|w|)=2\pi a_4\]
where the last equality uses  $*d\log(|w|)=\frac12*d\log(w\bar{w})=\frac12*(\frac{dw}{w}+\frac{d\bar{w}}{\bar{w}})=\frac{i}{2}(\frac{dw}{w}-\frac{d\bar{w}}{\bar{w}})$ and $\int_{|w|=1/2}(\frac{dw}{w}-\frac{d\bar{w}}{\bar{w}})=4\pi i$.

Hence we have:
\[\langle\eta,F\rangle=4\pi-\sum_1^4\theta_j,\quad\langle\eta,H\rangle=2\pi-\theta_4,\quad\langle\eta,E_j\rangle=2\pi-\theta_j-\theta_4\quad j=1,2,3.
\]

The class $2\pi(K_{X/S}+\mathbf{a}\cdot D)$ has the same intersection properties with $H$, $F$ and $E_j$ as those above.  For example,  
\begin{align*}
2\pi(K_{X/S}+\mathbf{a}\cdot D)\cdot E_1&=2\pi (-2H+\sum E_j+\sum a_jD_j)\cdot E_1\\
&=2\pi(-1+a_1+a_4)=2\pi-\theta_1-\theta_4
\end{align*}
as claimed.  The intersection numbers uniquely determine $\eta$, hence
\[\eta=2\pi(K_{X/S}+\mathbf{a}\cdot D).
\]
Apply the Gysin homomorphism $\pi_*(\eta^2)$ to produce the integration along fibres from \eqref{intfibre}.  Then
\[\eta^2=4\pi^2(K_{X/S}+\mathbf{a}\cdot D)^2=4\pi^2(K_{X/S}^2+2(\mathbf{a}\cdot D)\cdot K_{X/S}+(\mathbf{a}\cdot D)^2)
\]
hence 
\begin{align*}
\tfrac12\pi_*(\eta^2)&=2\pi^2(\pi_*(K_{X/S}^2)+\sum_j (2a_j \psi_j-a_j^2\psi_j))\\
&=2\pi^2(\kappa_1-\sum_j(1-a_j)^2\psi_j)
\end{align*}
which uses $\pi_*(D_j\cdot D_j)=-\psi_j$, $D_j\cdot D_k=0$, $\pi_*(D_j\cdot K_{X/S})=\psi_j$ and  $\kappa_1=\pi_*(K_{X/S}^2)+\sum_1^4 \psi_j$.  %$0=D_j\cdot\psi_j=D_j\cdot(\pi^*\psi_j+D_j)\Righatrrow\pi_*(D_j\cdot D_j)=-\pi_*(D_j\cdot\pi^*\psi_j)=-\psi_j$ 
Put $\theta_j=2\pi(1-a_j)$ and write $[\wpl]:=\eta$ so that
\[
[\wpl]=2\pi^2\kappa_1-\tfrac12\sum_{j=1}^4\theta_j^2\psi_j
\]
% and $\mathrm{Vol}(\modm_{0,4}^{\mathrm{hyp}}(\mathbf{L}))=V_{0,4}(\mathbf{L})$ 
as claimed.
\end{proof}

\subsubsection{Wolpert's formula}
Theorem~\ref{extension}, restated in the following proposition, generalises Wolpert's formula $[\omega^{\mathrm WP}]=2\pi^2\kappa_1$, \cite{WolChe}, for the cohomology class of the extension of the Weil-Petersson form to a closed current, and its generalisation to the $n>0$ case with $\mathbf{a}=(1,...,1)$ proven in \cite{ACoCom}.  
%We have changed the normalisation of the Weil-Petersson form by a factor of $1/2$ to agree with the algebraic geometry relation %$\pi_*c_1(K_{X/S})^2=\kappa_1$ where 
%$[\omega^{\mathrm WP}]=2\pi^2\kappa_1$. % and $[\Omega_{X/S}(1,...,1)]=2\pi c_1(K_{X/S})$. 

\begin{proposition}  \label{th:WPcoh}
If
$\mathbf{L}\in\left\{(i\theta_1,...,i\theta_n)\in i[0,2\pi)^n\mid \theta_j+\theta_k<2\pi,\ \forall j\neq k  \right\}$ 
then the extension of $\wpl$ defined in Proposition~\ref{comsumpi} as a closed current on $\overline{\modm}_{g,n}$ defines the cohomology class %\eqref{WPcoh}
\[\left[\wpl\right]=2\pi^2\kappa_1-\tfrac12\sum_j\theta_j^2\psi_j\in H^2(\overline{\modm}_{g,n},\br).
\]
where $\wpl$ denotes both the Weil-Petersson 2-form and its extension as a current to the compactification. 
\end{proposition}
\begin{proof}

The proof follows the elegant inductive proof of Wolpert's formula given by Arbarello and Cornalba in \cite{ACoCom}.  The main idea is to show that each side of \eqref{WPcoh} restricts to the boundary strata given by smaller moduli spaces, 
\[ \overline{\modm}_{g-1,n+2}\to\overline{\modm}_{g,n},\qquad\overline{\modm}_{g_1,n_1+1}\times \overline{\modm}_{g_2,n_2+1}\to\overline{\modm}_{g,n}
\] 
via the same formula \eqref{WPcoh} applied to the smaller moduli spaces.  Equivalently the aim is to show that the difference between the two sides of \eqref{WPcoh} (inductively) vanishes on the boundary.

The symplectic form $\wpl$ restricts naturally to the boundary strata.   The cotangent space to a point $p\in\overline{\modm}_{g,n}$ that represents the nodal pointed curve $(C,D)$ consists of meromorphic quadratic differentials on $C$ with at worst simple poles at the marked points $D=(p_1,...,p_n)$ and at worst double poles at the nodes of $C$.  If $p$ lies in the image of $\rho:\overline{\modm}_{g-1,n+2}\to\overline{\modm}_{g,n}$, then the restriction of the Petersson pairing \eqref{WPmetric} to the cotangent space $T^*_p\overline{\modm}_{g-1,n+2}$ pairs meromorphic quadratic differentials which, rather than double poles, have at worst simple poles at the node $p_{n+1}\sim p_{n+2}$.  Hence $\rho^*\wpl=\omega_{\mathbf{a},1,1}$ since the pairings agree when weights of 1 are assigned to the nodes.  On the image of $\phi_I:\overline{\modm}_{g_1,|I|+1}\times \overline{\modm}_{g_2,|J|+1}\to\overline{\modm}_{g,n}$,  by the same reasoning as above, the pairing restricts to the pairing on each factor of the domain and sends covectors from different domains to zero, so $\phi_I^*\wpl=\omega_{\mathbf{a}_I,1}\otimes1+1\otimes\omega_{\mathbf{a}_J,1}$ where $\mathbf{a}=(\mathbf{a}_I,\mathbf{a}_J)$.

The class $2\pi^2\kappa_1-\tfrac12\sum_j\theta_j^2\psi_j$ on the right hand side of \eqref{WPcoh} also restricts naturally to each boundary divisor of $\overline{\modm}_{g,n}$.   The restrictions to $\overline{\modm}_{g-1,n+2}$ are given by $\rho^*\kappa_1=\kappa_1$ and $\rho^*\psi_j=\psi_j$ hence 
\[\rho^*(2\pi^2\kappa_1-\tfrac12\sum_j\theta_j^2\psi_j)=2\pi^2\kappa_1-\tfrac12\sum_j\theta_j^2\psi_j.
\]
Similarly, for $I\sqcup J=\{1,...,n\}$ the restrictions to $\overline{\modm}_{g_1,|I|+1}\times \overline{\modm}_{g_2,|J|+1}$ are given by $\phi_I^*\kappa_1=\kappa_1\otimes 1+1\otimes\kappa_1$ and $\phi_I^*\psi_j=\psi_j\otimes 1$ for $j\in I$, and $\phi_I^*\psi_j=1\otimes\psi_j$ for $j\notin I$.  Hence
\[\phi_I^*(2\pi^2\kappa_1-\tfrac12\sum_j\theta_j^2\psi_j)=(2\pi^2\kappa_1-\tfrac12\sum_{j\in I}\theta_j^2\psi_j)\otimes1+1\otimes(2\pi^2\kappa_1-\tfrac12\sum_{j\in J}\theta_j^2\psi_j).\]  

The restrictions of $\wpl$ and $2\pi^2\kappa_1-\tfrac12\sum_j\theta_j^2\psi_j$ to boundary divisors both take the same form.  This allows an inductive method to show that that they agree in cohomology.
Given the inductive assumption that $[\wpl]$ and $2\pi^2\kappa_1-\tfrac12\sum_j\theta_j^2\psi_j$ agree on $\overline{\modm}_{g',n'}$ for $2g'-2+n'<2g-2+n$, by restriction of these classes to the boundary divisors, as described above, the difference between the classes $[\wpl]$ and $2\pi^2\kappa_1-\tfrac12\sum_j\theta_j^2\psi_j$ vanishes on each boundary divisor of $\overline{\modm}_{g,n}$.     For moduli spaces of dimension greater than one, as used by Arbarello and Cornalba \cite{ACoCal, ACoCom}, this implies that the two classes coincide which uses the fact that $H^2(\overline{\modm}_{g,n},\br)$, and its restriction to each boundary divisor, is known explicitly---it is generated by $\kappa_1$, all $\psi_j$ and all boundary classes.  

The initial step in the induction argument requires verifying \eqref{WPcoh} on $\overline{\modm}_{1,1}$ and $\overline{\modm}_{0,4}$.  A degree two cohomology class in this dimension is equivalent to its evaluation, hence equality of the volumes $\mathrm{Vol}(\modm_{1,1}^{\mathrm{hyp}}(L))$, respectively $\mathrm{Vol}(\modm_{0,4}^{\mathrm{hyp}}(\mathbf{L}))$, with Mirzakhani's polynomial $V_{1,1}(L)$, respectively $V_{0,4}(\mathbf{L})$, is enough to prove the cohomological equality  of \eqref{WPcoh} in these cases.  The $(1,1)$ volume is calculated in Appendix~\ref{vol11} where it is proven that 
\[\mathrm{Vol}(\modm_{1,1}^{\mathrm{hyp}}(L))=V_{1,1}(L)\] 
as required.  The $(0,4)$ volume is calculated in Proposition~\ref{04calculation} where it is shown to also coincide with Mirzakhani's polynomial.  The induction argument is complete and the proposition is proven.
\end{proof}

\begin{remark}
In the proof of Proposition~\ref{04calculation} it is shown that the extension of the curvature form $\Omega_{X/S}(\mathbf{a})$ to a closed current when $X=\modm_{0,5}$ and $S=\modm_{0,4}$ defines the cohomology class 
\begin{equation}   \label{curvcoh}
[\Omega_{X/S}(\mathbf{a})]=2\pi(K_{X/S}+\mathbf{a}\cdot D).
\end{equation} 
For $\mathbf{a}=a(\mathbf{L})$, the proof generalises to give the formula \eqref{curvcoh}, now defined on the Hassett space $\overline{\modm}_{0,\mathbf{a}}$, for any $\mathbf{L}\in i[0,2\pi)^4$ which satisfies $\left|\sum_{j=1}^4L_j\right|<4\pi$, or equivalently $\sum_{j=1}^4a_j>2$.  The induction argument in the proof of Proposition~\ref{th:WPcoh} now applies to any $\mathbf{a}$ satisfying $|\mathbf{a}|>2-2g$, since the initial $(1,1)$ case remains the same and the initial $(0,4)$ case uses the generalisation.  This leads to a proof that the formula \eqref{curvcoh} holds in general on any Hassett space $\overline{\modm}_{g,\mathbf{a}}$ with a consequence that the volumes are polynomials locally in $\mathbf{L}$.  The polynomials change as one crosses walls in the space of Hassett stability conditions.  The calculations of these polynomials appears in \cite{AMNWei}.  Theorem~\ref{th:vol} shows that the polynomials agree with Mirzakhani's polynomials $V_{g,n}(\mathbf{L})$ in the main chamber of the space of stability conditions.  The polynomials in other chambers are different and in particular have different top degree terms.  In \cite{DNoCou,NorCou} polynomials with the same highest degree terms as $V_{g,n}(\mathbf{L})$ are produced via counting lattice points in $\modm_{g,n}$, and in the compactification $\overline{\modm}_{g,n}$.  It would be interesting to uncover analogous polynomials with the same highest degree terms for polynomials arising from different chambers in the space of stability conditions using different compactifications of $\modm_{g,n}$.
\end{remark}

Corollary~\ref{th:vol} is an immediate consequence of Proposition~\ref{th:WPcoh} since $\mathrm{Vol}(\modm_{g,n}^{\mathrm{hyp}}(\mathbf{L}))$ is given by $\int_{\overline{\modm}_{g,n}}\exp\big(2\pi^2\kappa_1+\sum_1^n\tfrac12L_j^2\psi_j\big)$ which agrees with Mirzakhani's polynomial.  

Corollary~\ref{old} now follows from \cite{DNoWei} but it can also be proven directly.  The vanishing
\[\lim_{\theta\to 2\pi}\mathrm{Vol}(\modm_{g,n+1}^{\mathrm{hyp}}(0^n,\theta))=0
\]
follows from the degeneration of the K\"ahler metric in the limit, proven in \cite{STrWei}.  One can also see it from Mondello's formula \cite{MonPoi} for the Poisson structure.  This argument shows that the vanishing in the $2\pi$ limit holds more generally, for example when all boundary components are cusps and cone angles, even when the volume is not given by Mirzakhani's polynomial.  

The derivative formula can be proven directly as follows.  By \eqref{WPcoh} when $\mathbf{L}=(0^n,\theta)$ then $\omega^{\mathrm WP}(\mathbf{L})=2\pi^2\kappa_1-\tfrac12\theta^2\psi_{n+1}$
and 
$ \mathrm{Vol}(\modm_{g,n+1}^{\mathrm{hyp}}(0^n,i\theta))=\int_{\overline{\modm}_{g,n+1}}\frac{\big(2\pi^2\kappa_1-\tfrac12\theta^2\psi_{n+1}\big)^N}{N!}$
where $N=3g-2+n$.  Hence
\begin{align*}
\left.\frac{\partial}{\partial \theta}\mathrm{Vol}(\modm_{g,n+1}^{\mathrm{hyp}}(0^n,i\theta))\right|_{\theta=2\pi}
&=\left.- \int_{\overline{\modm}_{g,n+1}}\theta\psi_{n+1}\frac{\big(2\pi^2\kappa_1-\tfrac12\theta^2\psi_{n+1}\big)^{N-1}}{(N-1)!}\right|_{\theta=2\pi}\\
&=-2\pi \int_{\overline{\modm}_{g,n+1}}\psi_{n+1}\frac{\big(2\pi^2(\kappa_1-\psi_{n+1})\big)^{N-1}}{(N-1)!}\\
&=2\pi(2-2g-n) \int_{\overline{\modm}_{g,n}}\frac{\big(2\pi^2\kappa_1\big)^{N-1}}{(N-1)!}\\
&=2\pi(2-2g-n)\mathrm{Vol}(\modm_{g,n}^{\mathrm{hyp}}(0^n))
\end{align*}
where the third equality uses the pullback formula $\kappa_1-\psi_{n+1}=\pi^*\kappa_1$ and the pushforward formula $\pi_*(\psi_{n+1}\pi^*\eta)=(2g-2+n)\eta$ for any $\eta\in H^*(\overline{\modm}_{g,n})$.

The volume $\mathrm{Vol}(\modm_{g,n}^{\mathrm{hyp}}(\mathbf{L}))$ for general $\mathbf{L}\in\br_{\geq0}^m\cup i\hspace{.5mm} \br_{>0}^n$ is not expected to be given by evaluation of Mirzakhani's polynomial because the Deligne-Mumford compactification is not in general related to a compactification of $\modm_{g,n}^{\mathrm{hyp}}(\mathbf{L})$.  When there exists boundary geodesics of positive length together with cone points of cone angle greater than $\pi$, the results of Section~\ref{sec:compact} show that the boundary of a compactification, which consists of surfaces in which a cone point meets a boundary geodesic, necessarily has real codimension one, which is different to the Deligne-Mumford compactification.

To prove that indeed evaluation of Mirzakhani's polynomial does not give the volume, we constructed an example $\modm_{0,4}^{\mathrm{hyp}}(0,0,\theta i,(2\pi-\epsilon)i)$ of a non-empty moduli space in the introduction  so that $V_{0,4}(0,0,\theta i,(2\pi-\epsilon)i)<0$ for $4\pi\epsilon<\theta^2$.  The moduli space is non-empty, hence has positive volume, since $\theta i+2\pi-\epsilon<4\pi$ so existence of conical metrics follows from \cite{McOPoi}.

\appendix

\section{Volume of \texorpdfstring{$\cM^{\mathrm{hyp}}_{1,1}(i\theta)$}{M}}  \label{vol11}
In this appendix we calculate the volume of $\cM^{\mathrm{hyp}}_{1,1}(i\theta)$ which is required in the proof of Theorem~\ref{extension}.

The main idea is to extend the method used by Wolpert \cite{WolKah} in his proof of the cuspidal case.  A derivation of this formula for rational cone angles was given in \cite{NNAWei}.%This proof provides a base case which is used in the second proof of Theorem~\ref{extension}. In addition, here we also hope to demonstrate the usefulness of Theorem~\ref{extension} as a means for calculating volumes, rather than lengthy direct computations.  \\ \indent 

Let $M$ be a compact, oriented surface of genus $1$, with one boundary component. The fundamental group $\pi_1(M)$ is the free group on two generators, $\langle a,b \rangle$. A flat $\SL (2,\bbR)$-connection on $M$ is equivalent to a representative in the equivalence class of $\SL (2,\bbR)$ representations $\rho\in\Hom(\pi_1(M),\SL (2,\bbR))/\sim$, where the equivalence relation is given by conjugation. The boundary of $M$ is represented by the class $c=[a,b]\in\pi_1(M)$, and this corresponds to a cone point when $\tr(\rho(c))\in (-2,2)$. Such a representation gives $M$ the structure of a hyperbolic surface with cone angle  $\theta$ such that $-2\cos\left(\frac{\theta}{2}\right)=\tr \rho(c)$. \\ \indent 
There is an action of $\Aut(\pi_1(M))$ on $\Hom(\pi_1(M),\SL (2,\bbR))/\sim$, given by $g\cdot [\rho]=[\rho\circ g^{-1}]$. For $M$ a surface of type $(1,1)$, the automorphism group has three generators $g_1,\,g_2$ and $g_3$. The action of generators is given by, \begin{equation*}\begin{array}{ccc}
    g_1(a)=a^{-1} &  g_2(a)=b &  g_3(a)=ab\\
    g_1(b)=b      &  g_2(b)=a &  g_3(b)=b.
\end{array}\end{equation*}
One obtains a representation of $\Aut(\pi_1(M))$ into $\GL(2,\bbZ)$ by its action on $\bbZ^2$, generated by cosets of $a$ and $b$. Define $\Aut^+(\pi_1(M))\subset\Aut(\pi_1(M))$ as the preimage of $\SL(2,\bbZ)\subset\GL(2,\bbZ)$. This fits into a pullback square \begin{center}
\begin{tikzcd}[column sep = 4em, row sep = 4em]
\Aut^+(\pi_1(M)) \arrow[r,"\subset"] \arrow[d]  & \Aut(\pi_1(M)) \arrow[d] \\ 
\SL(2,\bbZ)  \arrow[r,"\subset"]  &  \GL(2,\bbZ).
\end{tikzcd}
\end{center}
Further, let $\Inn(\pi_1(M))\trianglelefteq\Aut^+(\pi_1(M))$ be the normal subgroup of inner automorphisms, with elements given by conjugation. The action of $\Inn(\pi_1(M))$ on $\Hom(\pi_1(M),\SL (2,\bbR))/\sim$ is trivial, so there is a well defined action of the quotient group $\Out^+(\pi_1(M)):=$ \newline $\Aut^+(\pi_1(M))/\Inn(\pi_1(M))$. \\ \indent
Define the character mapping \begin{align*}
    \chi:\Hom(\pi_1(M),\SL (2,\bbR))/\sim &\longrightarrow\bbR^3 \\
    \left[\rho\right]&\longmapsto \left[ \begin{array}{c}
         x(\rho)  \\
         y(\rho)  \\
         z(\rho)  \\
    \end{array} \right] :=\left[ \begin{array}{c}
         \tr(\rho(a))  \\
         \tr(\rho(b))  \\
         \tr(\rho(ab))  \\
    \end{array} \right].
\end{align*}
This is a homeomorphism \cite{FriThe,FKlVor}.  Level sets of the trace function \begin{equation}
    \kappa(x,y,z):=\tr(\rho [a,b])=x^2+y^2+z^2-xyz-2 
\end{equation}
describe regions in $\bbR^3$ corresponding to prescribed boundary holonomy. It is proven in \cite{GolMod} that any fixed $\kappa\in(-2,2)$ defines a surface in $\bbR^3$ with $5$ disconnected components. Further restriction to the region $x,y,z\geq2$ gives a copy of Teichm\"uller space $\cT^{\mathrm{hyp}}(\theta)$, where $\kappa=-2\cos\left(\frac{\theta}{2}\right)$. \\ \indent 
The mapping class group of $\cT^{\mathrm{hyp}}(\theta)$ consists of isotopy classes of homeomorphisms of surfaces. In the character variety, these correspond to automorphisms which preserve the polynomial $\kappa(x,y,z)$. A theorem of Dehn-Nielsen (proven in \cite{StiDeh}) says that this corresponds to the action of $\Out^+(\pi_1(M))\cong\SL(2,\bbZ)$ on the image of Teichm\"uller space in the character variety $\cT_{1,1}^\kappa=\left\{(x,y,z)\in\bbR^3|x^2+y^2+z^2-xyz-2=\kappa,\, x\geq 2,y\geq 2,z\geq2\right\}$. \\ \indent 
It will be convenient to work in the homogeneous coordinates $r=\frac{x}{yz},\,s=\frac{y}{zx}$ and $t=\frac{z}{xy}$. Here for a fixed $\kappa\in(-2,2)$ the Teichm\"uller space becomes $$\cT_{1,1}^\kappa=\left\{(r,s,t)\in\bbR^3\left|\right.r+s+t-1=(\kappa+2)rst,\, r>0,s>0,t>0\right\}.$$ 
Following \cite{WolKah}, define the subgroups 
\begin{itemize}
\item $\Gamma(2)=\left\{A\in\SL(2,\bbZ)\left|\right.A\equiv \left[\begin{matrix} 1 & 0 \\ 0 & 1\end{matrix}\right]\mod 2\right\}$, \\
\item $P\Gamma(2)$ as the image of $\Gamma(2)$ in $\PSL(2,\bbZ)$,
\item $\cM_2=\langle \phi_1,\,\phi_2,\,\phi_3\rangle\subset \GL(2,\bbZ)$, where $\phi_1=\left[\begin{matrix} -1 & -2 \\ 0 & 1\end{matrix}\right],\,\phi_2=\left[\begin{matrix} -1 & 0 \\ 0 & 1\end{matrix}\right]$ and $\phi_3=\left[\begin{matrix} 1 & 0 \\ -2 & 1\end{matrix}\right]$,
\item $\cM_2^+=\cM_2\cap\SL(2,\bbZ)$, corresponding to words of even length in generators of $\cM_2$. 
\end{itemize}
From \cite{WolKah} we have the following results; $[\PSL (2,\mathbb{Z}):\mathcal{M}_2^+]=6$, $[\cM_2:\mathcal{M}_2^+]=2$, and the action of $\cM_2$ on $\cT_{1,1}^\kappa$ is given by, 
\begin{equation*}\begin{array}{ccc}
    \phi_1(x)=yz-x &  \phi_2(x)=x&  \phi_3(x)=x \\
    \phi_1(y)=y      &  \phi_2(y)=y  &  \phi_3(y)=xz-y \\
    \phi_1(z)=z      &  \phi_2(z)=xy-z &  \phi_3(z)=z,
\end{array}\end{equation*} 
in $(x,y,z)$-coordinates and
\begin{equation*}\begin{array}{ccc}
    \phi_1(r)=1-r &  \phi_2(r)=\frac{tr}{1-t}&  \phi_3(r)=\frac{sr}{1-s} \\
    \phi_1(s)=\frac{rs}{1-r}      &  \phi_2(s)=\frac{ts}{1-t}  &  \phi_3(s)=1-s  \\
    \phi_1(t)=\frac{rt}{1-r}       &  \phi_2(t)=1-t &  \phi_3(t)=\frac{st}{1-s},
\end{array}\end{equation*}
in $(r,s,t)$-coordinates. \\ \indent
The main original part of the proof in this section is in computing the fundamental domain $\Delta$ for the $\cM_2$ action on $\cT_{1,1}^\kappa$ with $\kappa\in(-2,2)$. Wolpert does this in the case $\kappa=-2$ corresponding to cusped hyperbolic surfaces, and here the idea of proof will be similar. \vspace{0.2cm}
\begin{claim}
The fundamental domain $\Delta$ for the $\cM_2$ action on $\cT_{1,1}^\kappa$ with $\kappa\in(-2,2)$ is given by $\Delta=\{(r,s,t)\in\bbR^3\left|\right.r+s+t-1=(\kappa+2)rst,\,r\in\big(0,\frac{1}{2}\big],\,s\in\big(0,\frac{1}{2}\big],\,t\in\big(0,\frac{1}{2}\big]\}$. 
\end{claim}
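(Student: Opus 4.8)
The plan is to verify the two defining properties of a fundamental domain: that the $\cM_2$-translates of $\Delta$ cover $\cT_{1,1}^\kappa$, and that distinct translates of the interior $\mathrm{int}(\Delta)$ are disjoint. First I would record the \emph{reflection structure}. Each generator is an involution of $\cT_{1,1}^\kappa$: in $(x,y,z)$-coordinates $\phi_1$ is the Vieta involution $x\mapsto yz-x$ fixing $y,z$, and similarly $\phi_2$ acts on $z$ and $\phi_3$ on $y$. The two values $\{x,yz-x\}$ are the roots of $f(X)=X^2-yzX+(y^2+z^2-2-\kappa)$, so $\phi_1$ fixes pointwise the wall $W_1=\{2x=yz\}$ and exchanges its two sides. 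Translating through $r=x/(yz)$, the condition $r\le\tfrac12$ is exactly $x\le yz-x$, i.e.\ $x$ is the \emph{smaller} root; likewise $s\le\tfrac12\iff y\le xz-y$ and $t\le\tfrac12\iff z\le xy-z$. Thus $\Delta$ is precisely the set of ``reduced'' points at which every coordinate is the smaller root of its Vieta involution, bounded by the three walls $W_1,W_2,W_3$.

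For the covering I would use the monovariant $\Phi=xyz=1/(rst)$. A direct computation gives $\phi_1\colon\Phi\mapsto\Phi\cdot\tfrac{1-r}{r}$, so reflecting a coordinate that violates $r\le\tfrac12$ (the \emph{larger} root) strictly decreases $\Phi$. Moreover on $\cT_{1,1}^\kappa$ one has $x,y,z>2$ (if some coordinate equalled $2$ the $\kappa$-relation would force $\kappa\ge2$), and the estimate $f(2)=(y-z)^2+(2-\kappa)>0$ together with the vertex location $yz/2\ge2$ shows the reduced coordinate again exceeds $2$; hence a reduction stays inside $\cT_{1,1}^\kappa$, and $\Phi>8$ throughout while $\Phi\to+\infty$ towards the cusps where some $r\to0$. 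Since $\cM_2$ acts through the mapping class group of the once-punctured torus, the action is properly discontinuous and every orbit is discrete and closed; as $\Phi$ is proper, it attains a minimum on each orbit. A minimizer cannot have a coordinate exceeding $\tfrac12$, for then the corresponding reflection would lower $\Phi$; so it lies in $\Delta$, proving that every orbit meets $\Delta$.

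The disjointness of interiors is the main obstacle. The same computation shows that from a point of $\mathrm{int}(\Delta)$, where $r,s,t<\tfrac12$ strictly, \emph{every} generator strictly increases $\Phi$, so an interior point is a strict local minimum of $\Phi$ along its orbit; upgrading this to the global statement that $\mathrm{int}(\Delta)$ meets each orbit at most once is the delicate step. I would establish it via Poincar\'e's fundamental-domain theorem: the faces of $\Delta$ are the arcs $W_1,W_2,W_3$, each paired with itself by $\phi_i$, and the only incidences occur at the three vertices, which sit at the cusps $r=s=\tfrac12,\,t=0$ (and cyclically) on the boundary of the coordinate region. Checking that the edge- and vertex-cycle relations close up then gives that $\Delta$ is an exact fundamental domain.

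Finally, since the defining relation $r+s+t-1=(\kappa+2)rst$ varies continuously in $\kappa$ while the walls $r=\tfrac12$, $s=\tfrac12$, $t=\tfrac12$ and their incidences are independent of $\kappa$, the combinatorics of the tiling is identical to Wolpert's $\kappa=-2$ medial-triangle picture, and the index data $[\PSL(2,\bbZ):\cM_2^+]=6$ and $[\cM_2:\cM_2^+]=2$ confirm the chamber count. The one genuinely new point, compared with the $\kappa=-2$ simplex, is that the surface is now curved and the $\phi_i$ are nonlinear, area-distorting maps; this is why the monovariant $\Phi$, rather than a Euclidean volume comparison, is the right tool for controlling both the covering and the minimality of the reduced representative.
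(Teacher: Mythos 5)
Your covering argument is sound and is essentially the paper's own, with the monovariant $\Phi=xyz=1/(rst)$ playing the role the paper assigns to $E=x+y+z$: your displacement identity $\Phi\circ\phi_1=\Phi\cdot\frac{1-r}{r}$ is the multiplicative analogue of the paper's $\frac{E\circ\phi_1-E}{2yz}=\frac12-\frac{x}{yz}$, and either function shows that an orbit minimizer must have $r,s,t\le\frac12$. Your verification that reductions stay in $\cT_{1,1}^\kappa$ (both roots of $f$ exceed $2$ because $f(2)=(y-z)^2+2-\kappa>0$ and the vertex lies at $yz/2\ge 2$) is correct.

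The genuine gap is in the disjointness half, which you yourself flag as ``the delicate step'' and then do not carry out. The appeal to Poincar\'e's fundamental-domain theorem is not a routine check here: that theorem requires an action by isometries of a complete metric geometry, and at \emph{ideal} vertices its hypothesis is not that ``cycle relations close up'' but a completeness (cusp) condition. In the present situation the three walls are pairwise disjoint \emph{inside} $\cT_{1,1}^\kappa$ --- no point has two homogeneous coordinates $\ge\frac12$, since $r,s\ge\frac12$ forces $z^2=1/(rs)\le 4$, hence $z=2$ and then $(x-y)^2=\kappa-2<0$ --- so all incidences of walls occur at infinity, and the cusp condition there is exactly what would need proof; moreover the natural invariant (Weil--Petersson) metric is incomplete, so there is no off-the-shelf version of Poincar\'e's theorem to quote. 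The paper closes precisely this gap with a self-contained convexity (``ping-pong'') argument: using the algebraic fact just quoted, if a reduced word $w_{n+1}w_n\cdots w_1$ in the $\phi_i$ is applied to a point whose monovariant does not decrease under any single generator, then the monovariant strictly increases from the second letter onwards; hence two minimizers in $\Delta$ on one orbit differ by a word of length at most one, and the length-one case $S_2=\phi_k(S_1)$ with equal monovariant forces the point onto the wall fixed pointwise by $\phi_k$, giving $S_1=S_2$. Your $\Phi$ satisfies the same displacement inequalities as $E$, so you could complete your proof by replacing the Poincar\'e appeal with exactly this word-length induction; as written, however, the uniqueness statement --- the substance of the claim --- remains unproven.
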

\begin{proof of claim}
Consider the function $E(x,y,z)=x+y+z$. We will show that for any $X\in\cT_{1,1}^\kappa$, $E$ admits a unique minimum in $\Delta$, on the orbit $\cM_2(X)$. First note that $x,y,x\geq 2$ so $E(x,y,z)>0$. Also $\cM_2$ has discrete orbits, so a minimum for $E$ on $\cM_2(X)$ exists. \\ \indent
We show that minimums of $E$ on the orbit exist in $\Delta$. Consider the formulas, \begin{equation} \label{E equations}
\frac{E\circ\phi_1-E}{2yz}=\frac{1}{2}-\frac{x}{yz}, \qquad \frac{E\circ\phi_2-E}{2xy}=\frac{1}{2}-\frac{z}{xy}, \qquad \frac{E\circ\phi_3-E}{2zx}=\frac{1}{2}-\frac{y}{zx}.
\end{equation}
If $Y\in\cM_2(X)$ is a minimum for $E$, then $E\circ\phi_i(Y)\geq E(Y)$ for any $i=1,2$ or $3$. So by \eqref{E equations}, coordinates of $Y$ are such that $a,b,c\leq\frac{1}{2}$. So $E$ has a minimum in $\Delta$, on the orbit $\cM_2(X)$.  \\ \indent 
In what remains, we prove uniqueness of such a minimum. First, we prove a convexivity property for the function $E$. Let $w_{n+1}w_nw_{n-1}\cdots w_1$ be a reduced word of length $n+1$ in $\{\phi_1,\phi_2,\phi_3\}$, and let $W=w_nw_{n-1}\cdots w_1$. Note that $\phi_i^2=1$ for $i=1,2,3$, so in particular, $w_nW=w_{n-1}\cdots w_1$. For any $S\in\cT_{1,1}^\kappa$ such that $E(W(S))\geq E(w_{n-1}\cdots w_1(S))=E(w_{n}W(S))$, it is not possible that $E(W(S))\geq E(w_{n+1}W(S))$. \\ \indent
If it were possible, then applying equations \eqref{E equations} to $W(S)$ would imply that at least two of $r,s,t$ would be greater than or equal to $\frac{1}{2}$. Suppose with out loss of generality that $r\geq\frac12$ and $s\geq\frac12$, then $z^2=\frac{1}{rs}\leq 4$ so $z\in [-2,2]$. By definition of $W(S)\in\cT_{1,1}^\kappa$, this would mean that $z=2$, so $x^2+y^2+2^2-2xy=\kappa+2$, giving that $(x-y)^2=\kappa-2<0$. This is a contradiction, so the convexity property is established. \\ \indent 
Now, suppose $S_1, S_2\in\Delta$ are points in the orbit $\cM_2(X)$, which both reach a minimum value for $E$. Since $S_1, S_2\in\Delta$ \eqref{E equations} tells us that, $E(\phi_i(S_1))\geq E(S_1)$ and $E(\phi_i(S_2))\geq E(S_1)$ for $i=1,2,3$. Convexivity further implies that $E(\phi_j\phi_i(S_1))>E(\phi_iS_1)$ and $E(\phi_j\phi_i(S_2)) > E(\phi_i S_1)$ for $i=1,2,3$. Repeated application of convexivity and the strictness of these inequalities leave only two possibilities. Either $S_1=S_2$ or there exists a $k=1,2,3$, such that $S_1=\phi_k(S_2)$ (which is equivalent to $S_2=\phi_k(S_1)$). In the second case, $E(S_1)=E(\phi_k(S_2))= E(S_2)=E(\phi_k(S_1))$. By the definition of $E$ and $\phi$, it follows that $S_1=S_2$. This completes the proof.  
$ \strut\hfill\square$
\end{proof of claim} \vspace{-0.2cm}
Rewriting this region as depending on only two variables, we obtain $$\Delta=\left\{\left(r,s,\frac{r+s-1}{(\kappa+2)rs-2}\right)\in\bbR^3\left|\right.\,r\in\Big(\frac{1-2s}{2-(\kappa+2)s},\frac{1}{2}\Big],\,s\in\Big(0,\frac{1}{2}\Big]\right\}.$$
\begin{remark}
When $\theta\rightarrow 2\pi$ the fundamental domain contracts to a point. This explains why the volume vanishes in this case. 
\end{remark}
The Weil-Petersson form can be described explicitly on the character variety in $(x,y)$-coordinates, \begin{equation}
\omega=\frac{4dx\wedge dy}{xy-2z}.
\end{equation}
In $(r,s)$-coordinates this becomes \begin{equation}
\omega=\frac{dr\wedge ds}{(1-(\kappa+2)rs)rst}=\frac{dr\wedge ds}{(1-r-s)rs}. \end{equation}
Finally, the volume is given by \begin{align*}
\int_{\mathcal{M}_{1,1}(i\theta)}\omega &=\frac{1}{[\PSL (2,\mathbb{Z}):\mathcal{M}_2^+ ]}\int_{\mathcal{T}_{1,1}^\kappa/\mathcal{M}_2^+}\omega \\
&= \frac{[\cM_2:\mathcal{M}_2^+]}{[\PSL (2,\mathbb{Z}):\mathcal{M}_2^+]}\int_{\mathcal{T}_{1,1}^\kappa/\mathcal{M}_2^+}\omega \\ 
&= \frac{2}{6}\int_{\Delta} \frac{dr\wedge ds}{(1-r-s)rs}  \\
&= \frac{1}{3}\int_0^{\frac{1}{2}}\int_{\frac{1-2s}{2-(\kappa+2)s}}^{\frac{1}{2}}\frac{1}{(1-r-s)rs}dr\,ds\\
&= \frac16 \left(\pi^2-\theta^2/4 \right)& \theta\in [0,2\pi),
\end{align*}
where $\kappa=-2\cos\left(\frac{\theta}{2}\right)$.  A further factor of $1/2$ is needed due to the $\bz_2$ automorphism of the generic hyperbolic surface, yielding
\[\mathrm{Vol}(\modm_{1,1}^{\mathrm{hyp}}(i\theta))=\frac{1}{48}(4\pi^2-\theta^2)
\]
which agrees with Mirzakhani's polynomial evaluated at $i\theta$.

\section{Non-existence of pants decompositions}  \label{nopop}

In this appendix, we give a construction of arbitrary genus hyperbolic surface without a pants decomposition in a chosen isotopy class.
The two hexagons in Figure~\ref{conehex} naturally glue along the edges of given lengths in the case $w=x$ to produce a hyperbolic pair-of-pants with boundary lengths $X(x,y,z)$, $Y(x,y,z)$ and $Z(x,y,z)$ (respectively opposite the arcs of lengths $x$, $y$ and $z$).
\begin{figure}[ht]  
	\centerline{\includegraphics[height=8cm]{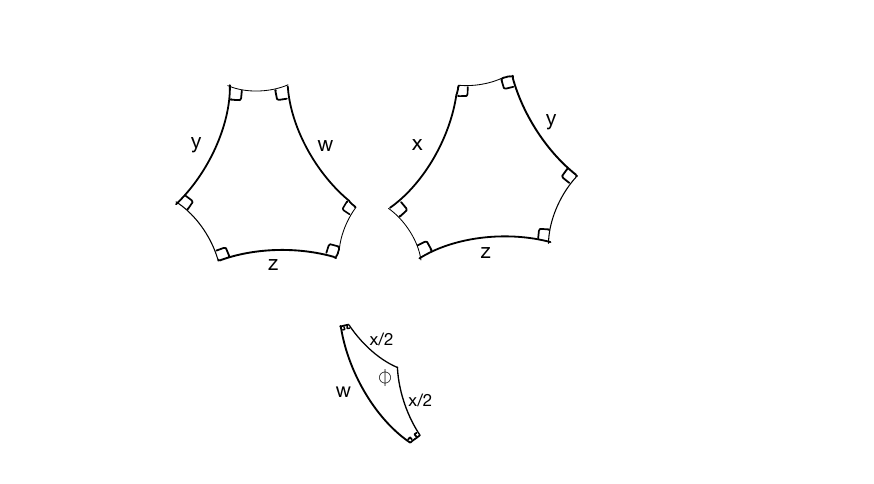}}
	\caption{Glue to produce surface with cone angle $\pi+\phi$}
	\label{fig:fatloop} \label{conehex}
\end{figure} 
The pentagon in Figure~\ref{conehex} has area given by $\pi-\phi$ and edge lengths satisfying
\[ \cosh(w)=-\cosh^2(x/2)\cos\phi+\sinh^2(x/2).
\]
When $\phi\approx\pi$ (and $\phi<\pi$), we have the area $\approx 0$ and $w\approx x$, due to $\cosh(w)=\cosh^2(w/2)+\sinh^2(w/2)$, so the pentagon is thin.  Glue the pentagon and hexagon along the edges of lengths $x/2$ and $x$ to create a cone angle of $\pi+\phi$.  Now glue the remaining matching sides of lengths $w$, $y$ and $z$ to obtain a genus zero hyperbolic surface $\Sigma$ with four boundary components---the cone point, and three geodesics of lengths $X$, $Y'>Y$ and $Z'>Z$. 
\begin{lemma}  \label{noPD}
The genus zero hyperbolic surface $\Sigma$ constructed from gluing the polygons in Figure~\ref{conehex} does not admit a geometric
pants decomposition.
\end{lemma}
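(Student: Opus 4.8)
The plan is to proceed by contradiction. Topologically $\Sigma$ is a genus zero surface with four distinguished boundary components: the cone point $p$, of angle $\pi+\phi\in(\pi,2\pi)$, and the three geodesics of lengths $X$, $Y'$, $Z'$. A geometric pants decomposition of such a surface consists of a single simple closed geodesic $\gamma$, disjoint from $p$, separating $\Sigma$ into two hyperbolic pairs of pants. There are exactly three isotopy classes of essential separating simple closed curves, indexed by which geodesic boundary is enclosed together with $p$; write them $\gamma_X,\gamma_{Y},\gamma_{Z}$. Hence it suffices to prove that none of the three classes is represented by a smooth simple closed geodesic avoiding $p$. In each case the complementary piece not containing $p$ would be an ordinary pair of pants, so the entire obstruction lives on the side of $p$: if $\gamma_X$ were such a geodesic, the region it bounds together with the $X$-boundary would be a smooth hyperbolic pair of pants carrying the cone point, and I must rule this out.

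The geometric input driving the argument is that $\pi+\phi>\pi$. First I would use the construction to locate the cone point: the pentagon of Figure~\ref{conehex} glues $p$ into the interior region through which each of the three separating classes $\gamma_i$ must pass, and as $\phi\to\pi$ the pentagon has area $\pi-\phi\to 0$ and $w\to x$, making that region arbitrarily thin and the confinement sharp. I would then tighten each $\gamma_i$ by curve shortening. The part of $\gamma_i$ that encloses $p$ has, on its own, elliptic holonomy and therefore no closed geodesic representative; shortening contracts it onto the cone point. Because the cone angle exceeds $\pi$, the curve is caught on $p$ rather than sliding smoothly past it, so the infimal length in the class among simple curves avoiding $p$ is attained only in the limit by a piecewise geodesic with a vertex at $p$. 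This is exactly the crooked-boundary behaviour quantified by Lemma~\ref{angle length merge} and equation \eqref{crownlength1}: the smooth geodesic one would hope to use is replaced by a shorter crooked curve through the cone point, whose associated length is the $x=\ell(\beta')$ of \eqref{crownlength1}. Carrying this out for all three classes shows that no smooth simple closed geodesic disjoint from $p$ exists, so $\Sigma$ has no geometric pants decomposition.

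The main obstacle is the verification, for each of the three classes, that the simple length-minimiser genuinely has a vertex at $p$. The subtlety is that since $\pi+\phi<2\pi$ a geodesic cannot pass straight through the cone point---both sides would need angle at least $\pi$---so the correct statement is not that the free-homotopy minimiser runs through $p$, but that it ceases to be simple once the cone angle exceeds $\pi$, its shortest simple competitor being the crooked curve through $p$. Establishing this requires the explicit edge-length data of the pentagon and hexagon together with the hyperbolic pentagon formula underlying \eqref{crownlength1}, used to show that tightening any of the three enclosing curves drives its distance to $p$ to zero. A secondary difficulty is that the three classes are not symmetric---the construction leaves $X$ fixed while lengthening $Y,Z$ to $Y'>Y$ and $Z'>Z$---so the two classes enclosing the long boundaries, to which the merging mechanism of Lemma~\ref{angle length merge} applies most directly, and the class enclosing the short boundary $X$ may have to be treated by slightly different estimates before concluding that every geometric pants decomposition is obstructed.
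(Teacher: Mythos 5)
Your proposal identifies the right phenomenon (curves in the separating classes get caught on the cone point because $\pi+\phi>\pi$), but it is a plan, not a proof: the step you yourself flag as ``the main obstacle'' --- verifying that the length-minimising simple curve in each class acquires a vertex at $p$ --- is precisely the content of the lemma, and you never carry it out; no estimate using the edge data of Figure~\ref{conehex} is ever made. Worse, even if you had carried it out, the statement would not suffice. A crooked \emph{global} minimiser in a class does not preclude a smooth simple closed geodesic in that class occurring as a non-minimising \emph{local} minimum: a smooth geodesic, if it existed, is itself a fixed point of curve shortening, so ``tightening any curve drives it to $p$'' presupposes what you are trying to prove. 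The paper's proof sidesteps both problems with a Gauss--Bonnet argument that your proposal never invokes: \emph{any} hyperbolic pair of pants bounded by the cone point of angle $\pi+\phi$ and two closed geodesics has area exactly $\pi-\phi\approx 0$, independently of which isotopy class the pants curve lies in; but cutting such a pair of pants along the geodesic loop based at the cone point yields an annulus containing the surface boundary geodesic lying in that pair of pants, and this annulus has area bounded below away from zero because that boundary is long and lies at large distance ($\sim x/2$) from the cone point. The contradiction kills every isotopy class of pants decomposition at once, which is why the paper needs no case analysis and no discussion of minimisers at all.

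Two further errors in your setup. First, ``there are exactly three isotopy classes of essential separating simple closed curves'' is false: there are three \emph{topological types}, indexed by the partition of the four boundary components, but infinitely many isotopy classes within each type, related by Dehn twists; since the lemma rules out all geometric pants decompositions, all of these classes must be handled (the paper's area argument does so uniformly, your class-by-class tightening argument would not obviously terminate). Second, the holonomy claim is wrong as stated: a pants curve $\gamma_X$ encloses $p$ \emph{together with} a geodesic boundary, so its holonomy is the product of an elliptic and a hyperbolic element, which is generically hyperbolic --- indeed equation \eqref{crownlength1} computes the translation length of exactly such a product. Only a loop encircling $p$ alone has elliptic holonomy, and no pants curve is of that form. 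You partially correct this later (``it ceases to be simple once the cone angle exceeds $\pi$''), but that corrected statement, which is the mechanism of Lemma~\ref{angle length merge}, is again asserted rather than established for the surface $\Sigma$ at hand.
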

\begin{proof}
Choose $\phi\approx\pi$ and consider a pants decomposition of $\Sigma$ defined by the isotopy class of a simple closed loop that separates $\Sigma$ into two pairs-of-pants consisting of the pair-of-pants $P$ containing the cone point and the boundary geodesic of length $Z'$ and its complement that contains the boundaries of lengths $X$ and $Y'$.  Then $\mathrm{area}(P)=\pi-\phi\approx 0$.  % $P$ contains the arc of length $x/2$.  
Cut along the unique non-trivial closed geodesic from the cone point to itself to decompose $P$ into two annuli including $A\subset P$ that contains the boundary geodesic of length $Z'$.  Then $A$ has large area because $Z'$ and $x/2$ are large.  %(Draw picture: can estimate area.)  
The large area contradicts $\mathrm{area}(P)=\pi-\phi\approx 0$.  Thus $\Sigma$ does not admit a a pants decomposition in the given isotopy class.  

More generally, choose any topological pants decomposition of $\Sigma$ and suppose it admits a geometric pants decomposition in the same isotopy class.  One of the pair-of-pants $P$ contains the cone point.  Then the distance from the cone point to the boundary component of $\Sigma$ in $P$ is large and the argument above again produces a large area for an annulus inside $P$ hence a contradiction and the  proposition is proven.
\end{proof} 
%A crooked pants decomposition of $\Sigma$ can be obtained by cutting along a perpendicular geodesic from the cone point to one of the geodesic boundary components to produce a crooked pair of pants with two smooth geodesic boundaries and one piecewise geodesic boundary with angles $\pi/2$, $\pi/2$, $\phi+\pi$.   
\begin{corollary}  \label{noPDg}
There exist hyperbolic surfaces of arbitrary genus and arbitrary geometric boundary types, for example all cusps, except for one cone angle close to $2\pi$, without a pants decomposition in a chosen isotopy class.
\end{corollary}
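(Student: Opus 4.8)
The plan is to bootstrap from Lemma~\ref{noPD} by treating the surface $\Sigma$ of Figure~\ref{conehex} as a building block and gluing arbitrary hyperbolic pieces onto its geodesic boundary components, so as to realise any prescribed genus and boundary behaviour while leaving the obstruction intact. First I would fix $\phi\approx\pi$, so that the cone angle $\pi+\phi$ is close to $2\pi$, and arrange as before that the arc length $x$ is large, hence both the boundary geodesic $Z'$ and the distance $x/2$ from the cone point to $Z'$ are large. The single mechanism driving everything is a comparison of areas: any hyperbolic pair of pants carrying the cone point as one of its three boundary components has, by Gauss--Bonnet,
\[
\text{area}(P)=2\pi-(\pi+\phi)=\pi-\phi\approx 0 ,
\]
independently of whether its other two boundary components are geodesics or cusps, whereas the subregion of $P$ lying between the cone point and $Z'$ is a collar of the long geodesic $Z'$ of width $\approx x/2$, whose area grows without bound as $\ell(Z')$ and $x/2$ grow. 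This is exactly the contradiction exploited in Lemma~\ref{noPD}.

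Next, to produce arbitrary genus $g$ and arbitrary boundary types I would attach hyperbolic surfaces-with-boundary along the boundary geodesics $X$ and $Y'$ of $\Sigma$, and also along $Z'$ if one insists on no geodesic boundary at all (for instance to realise the all-cusps case). Concretely: to raise the genus, glue a one-holed torus along a geodesic of matching length and iterate; to create a cusp or a geodesic boundary of prescribed type, glue a piece carrying that feature. Hyperbolic structures with prescribed boundary lengths exist on each glued piece and can be chosen with matching lengths, so the result is a smooth hyperbolic surface $\Sigma'$ of the desired topological type carrying a single cone point of angle close to $2\pi$. Since every gluing curve lies on the far boundary components $X,Y',Z'$, gluing along geodesics leaves $Z'$ a simple closed geodesic of the same length and at the same distance $\approx x/2$ from the cone point, whether $Z'$ remains a boundary or becomes internal.

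Finally I would choose the isotopy class of pants decomposition of $\Sigma'$ so that the pair of pants containing the cone point is bounded by a curve isotopic to $Z'$ (together with two further curves on the cone-point side). For this class the argument of Lemma~\ref{noPD} applies verbatim: the cone-point pair of pants $P$ has area $\pi-\phi\approx 0$, yet it contains the large-area collar separating the cone point from $Z'$, a contradiction, so $\Sigma'$ admits no geometric pants decomposition in the chosen class. The main obstacle is precisely this last step---ensuring that the chosen topological decomposition forces the cone point and the long, distant geodesic $Z'$ into a common pair of pants, so that the area comparison actually bites. Once the isotopy class is pinned down this way the genus and boundary data become irrelevant to the estimate, since they only affect $\Sigma'$ away from the collar realising the contradiction; the remaining ingredients (existence of the glued pieces with matching boundary lengths, and the Gauss--Bonnet computation) are routine.
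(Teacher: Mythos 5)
Your proposal is correct and takes essentially the same approach as the paper: both bootstrap from Lemma~\ref{noPD} by gluing hyperbolic pieces along the geodesic boundary components of $\Sigma$ and choosing the isotopy class of the pants decomposition so that the area obstruction persists. The differences are minor: the paper puts all three curves $X$, $Y'$, $Z'$ into the chosen isotopy class, so that any geometric realisation must cut along these geodesics (the unique geodesics in their isotopy classes) and hence restricts to a geometric pants decomposition of $\Sigma$ itself, letting Lemma~\ref{noPD} apply as a black box without re-examining distances in the glued surface, whereas you keep only the curve isotopic to $Z'$ and re-run the area argument inside $\Sigma'$ (which then genuinely needs your observation that the cone point remains far from $Z'$ after gluing, since shortcuts through the glued pieces must first reach $X$ or $Y'$, which are also far from the cone point); note also that your cone-point pair of pants is bounded by the cone point, the curve isotopic to $Z'$, and \emph{one} further curve (not two), which is the convention your Gauss--Bonnet count $\text{area}(P)=\pi-\phi$ already uses.
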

\begin{proof}
Simply glue hyperbolic surfaces to the example $\Sigma$ from Lemma~\ref{noPD} and choose the pants decomposition isotopy class to contain the three geodesic boundary components of $\Sigma$.  For example, consider a genus 2 surface with a single cone angle and no other boundary components obtained by arranging $Y=Z$, hence $Y'=Z'$, and gluing along these two boundary components.  Then attach a genus 1 surface with geodesic boundary of length $X$ to produce a genus 2 hyperbolic surface with one cone point equipped with a topological pants decomposition not realisable geometrically.
\end{proof}

\end{document}